\definecolor{darkgreen}{rgb}{0,0.50,0} 
\definecolor{darkred}{rgb}{0.55,0,0}
\definecolor{darkblue}{rgb}{0,0,0.6}
\definecolor{darkteal}{rgb}{0.0, 0.25, 0.5}
\tikzset{dummy/.style= {circle,fill,draw,inner sep=0pt,minimum size=1.2mm}}
\tikzset{vertex/.style={fill, circle, minimum size=.1cm, inner sep=0pt}}
\newcommand{\newrefformat}[2]{}
\crefname{lemma}{Lemma}{Lemmas}
\crefname{theorem}{Theorem}{Theorems}
\crefname{definition}{Definition}{Definitions}
\crefname{proposition}{Proposition}{Propositions}
\crefname{remark}{Remark}{Remarks}
\crefname{corollary}{Corollary}{Corollaries}
\crefname{equation}{Equation}{Equations}
\crefname{ex}{Example}{Examples}
\crefname{appsec}{Appendix}{Appendices}
\theoremstyle{plain}
\newtheorem*{theorem*}{Theorem}
\newtheorem{theorem}{Theorem}[section]
\newtheorem{lemma}[theorem]{Lemma}
\newtheorem{corollary}[theorem]{Corollary}
\newtheorem{proposition}[theorem]{Proposition}
\theoremstyle{definition}
\newtheorem{definition}[theorem]{Definition}
\newtheorem{example}[theorem]{Example}
\theoremstyle{remark}
\newtheorem{remark}[theorem]{Remark}
\numberwithin{equation}{section}
\DeclareMathOperator{\Vect}{Vect}
\DeclareMathOperator{\Cob}{Cob}
\newcommand{\bndry}{\partial}
\newcommand{\Hom}{\mathrm{Hom}}
\newcommand{\crit}{\mathrm{Crit}}
\newcommand{\cyl}{{\rm {Cyl}}}
\newcommand{\bind}{{\rm {ind_b}}}
\newcommand{\dind}{{\rm {ind_d}}}
\newcommand{\id}{{\mathrm {id}}}
\newcommand{\tw}{{\mathrm {tw}}}
\newcommand{\bi}{\mathbf{b}}
\newcommand{\de}{\mathbf{d}}
\newcommand{\TT}{{\rm {\tau}}}
\newcommand{\br}{{\rm {\beta}}}
\renewcommand{\emptyset}{\varnothing}
\renewcommand{\Vect}{\mathrm{Vect}}
\newcommand{\RR}{\mathbb{R}}
\newcommand{\cat}[1]{\mathcal{#1}}
\newcommand{\bbr}{\mathbb{R}}
\newcommand{\calc}{\mathcal{C}}
\newcommand{\cals}{\mathcal{S}}
\newcommand{\out}{\bgroup\markoverwith
	{\textcolor{red}{\rule[.6ex]{3pt}{0.6pt}}}\ULon}
\newcommandx{\note}[2][1=]{\todo[color=orange!50!white,linecolor=orange!40!black,size=\tiny]{#2}}
\newcommandx{\noteil}[2][1=]{\todo[inline,color=orange!50!white,linecolor=orange!40!black,size=\normalsize]{#2}}
\title[Nested cobordisms, Cyl-objects and Temperley-Lieb algebras]{Nested cobordisms, Cyl-objects and\\Temperley-Lieb algebras}
\author[Calle]{Maxine E.\ Calle}
\address{Department of Mathematics, University of Pennsylvania,
209 South 33rd Street,
Philadelphia, PA, 19104, USA}
\email{callem@sas.upenn.edu}
\author[Hoekzema]{Renee S.\ Hoekzema}
\address{
Department of Mathematics,
Vrije Universiteit Amsterdam,
De Boelelaan 1111,
1081 HV Amsterdam,
The Netherlands
}
\email{r.s.hoekzema@vu.nl}
\author[Murray]{Laura Murray}
\address{Department of Mathematics \& Computer Science, Providence College, 1 Cunningham Square, Providence, RI, 02918, USA}
\email{lmurray7@providence.edu}
\author[Pacheco-Tallaj]{Natalia Pacheco-Tallaj}
\address{Department of Mathematics, Massachusetts Institute of Technology, 77 Massachusetts Avenue, Cambridge, MA 02139-4307, USA}
\email{nataliap@mit.edu}
\author[Rovi]{Carmen Rovi}
\address{Department of Mathematics and Statistics, Loyola University Chicago, 1032 W. Sheridan Road, IL 60660, USA}
\email{crovi@luc.edu}
\author[Sridhar-Shapiro]{Shruthi Sridhar-Shapiro}
\address{Department of Mathematics and Computer Science, Santa Clara University, 500 El Camino Real, Santa Clara, CA  95053, USA }
\email{ssridhar@scu.edu}
\keywords{Nested manifolds, cobordism categories, cyclic objects, Temperley-Lieb algebras, stratified Morse theory}
\subjclass[2020]{
57R90, 
55N22, 
18F99, 
57K16} 
\begin{document}

\maketitle
\begin{abstract}
We introduce a discrete cobordism category for nested manifolds and nested cobordisms between them.
A variation of stratified Morse theory applies in this case, and yields generators for a general nested cobordism category.
Restricting to a low-dimensional example of the ``striped cylinder''
cobordism category $\cyl$, we give a complete set of relations for the generators.
With an eye towards the study of TQFTs defined on a nested cobordism category, we describe functors $\cyl\to\calc$, which we call $\cyl$-objects in $\cat C$, and show that they are related to known algebraic structures such as Temperley-Lieb algebras and cyclic objects.
We moreover define novel algebraic constructions inspired by the structure of $\cyl$-objects, namely a doubling construction on cyclic objects analogous to edgewise subdivision, and a cylindrical bar construction on self-dual objects in a monoidal category.
\end{abstract}

\tableofcontents
\section{Introduction}
The central objects of study in this paper are \textit{nested manifolds} and cobordisms between them. A nested manifold is a manifold together with a subset that is diffeomorphic to an embedded submanifold (which itself possibly comes with an embedded subsubmanifold, and so on), such that every subsequent embedding has codimension at least 1. Examples of nested manifolds appear throughout geometry and topology, such as knots, configuration spaces, and tangles. 

A \textit{cobordism} between two nested manifolds $M_I$ and $N_I$, for $I$ a sequence of dimensions, is witnessed by a nested manifold $W_{I+1}$ of one dimension higher so that $\bndry W_{I+1} \cong M_I \amalg N_I$. That is, $W_{I+1}$ is a cobordism of ``all the levels at once,'' meaning that it provides a cobordism of each $d$-dimensional embedded submanifold and these cobordisms form a nested manifold themselves.
Cobordism groups of nested manifolds were studied in
\cite{wall1961cobordism, Stong-cobordism-of-pairs}
and a topological cobordism category for nested manifolds was introduced in \cite{ayala2008geometric,Hoekzema}. The aim of this paper and subsequent work is to construct and study a discrete cobordism category of nested manifolds, $\Cob_I$, and functors out of it.

\begin{theorem}
Every morphism in $\Cob_I$ has a Cerf decomposition into elementary nested cobordisms. These elementary nested cobordisms are determined by a nested Morse function, and either have:
\begin{itemize}
    \item no critical points, in which case the elementary cobordism is a mapping cylinder of a nested pseudo-isotopy class of self-diffeomorphisms of the boundary manifold;
    \item one critical point, in which case the critical point $p_j^i$ is an index $j$ critical point on the $d_i$-dimensional submanifold of the nested cobordism.
\end{itemize}
\end{theorem}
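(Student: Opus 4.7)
The plan is to adapt the classical Cerf/Morse-theoretic decomposition of cobordisms to the nested setting using a variation of stratified Morse theory. Given a nested cobordism $W_{I+1}$ from $M_I$ to $N_I$, I would proceed in three stages: first construct a well-behaved nested Morse function on $W_{I+1}$, then slice the cobordism along regular level sets separating successive critical values, and finally identify each elementary piece.

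First I would establish the existence of a \emph{nested Morse function} $f\colon W_{I+1}\to [0,1]$, i.e., a smooth function whose restriction to every submanifold in the nesting is Morse, whose critical values across all strata are pairwise distinct, and with $f^{-1}(0)=M_I$, $f^{-1}(1)=N_I$. Construction proceeds by induction on the nesting depth: starting from the innermost (lowest-dimensional) submanifold, build a Morse function there with prescribed boundary behaviour, then extend to each successive ambient submanifold using a tubular neighbourhood and a cut-off argument, perturbing the extension (while fixing it on the inner stratum) to restore the Morse condition on the larger stratum. Standard genericity/transversality arguments further perturb $f$ so that the resulting critical values are all distinct.

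Next, let $0<c_1<\dots<c_n<1$ enumerate these critical values and choose regular values $a_k$ with $a_{k-1}<c_k<a_k$. The preimages $f^{-1}([a_{k-1},a_k])$ partition $W_{I+1}$ into nested sub-cobordisms, each containing exactly one critical point, which lies on a unique submanifold of some dimension $d_i$ and has a well-defined Morse index $j$. The classical Morse lemma applied to that single stratum gives the local model and fixes the $(i,j)$ data, while the other strata pass through the slab without critical events. For a piece with no critical points, I would choose a Riemannian metric on $W_{I+1}$ \emph{compatible with the nesting}, i.e., such that each submanifold in the nesting is invariant under the gradient flow of $f$; integrating the resulting flow yields a diffeomorphism of the slab onto a mapping cylinder of a nested self-diffeomorphism of its incoming boundary, and the contractibility of the space of such adapted metrics shows the cylinder is well-defined up to nested pseudo-isotopy.

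The main obstacle will be the no-critical-point step: one must ensure that the gradient vector field is tangent to every submanifold of the nesting simultaneously, which is not automatic from the fact that $f$ restricts to a Morse function on each stratum. Establishing the existence of nesting-compatible metrics (built inductively from the innermost stratum outward using tubular neighbourhoods) and verifying that they form a contractible space is the stratified-Morse-theoretic heart of the argument, and it is what ultimately identifies the critical-point-free elementary pieces with nested mapping cylinders rather than with more general nested diffeomorphisms.
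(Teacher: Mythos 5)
Your proposal follows the same three-stage arc as the paper (build a nested Morse function, slice by regular values, identify the elementary slabs) but substitutes different sub-arguments in two places. For existence you go inner-out: Morse on the lowest stratum, then extend via tubular neighbourhoods and cut-offs. The paper goes top-down: start with an ordinary excellent Morse function on $W_{d_n}$, then perturb it to a nested Morse function while keeping it constant on the boundary, using the density of stratified Morse functions (\cref{thm: nested Morse dense in smooth}, via \cite{SMT88}) together with the extension theorem for Thom transversality. Your inner-out route carries an overlooked constraint: the naive tubular-neighbourhood extension of a Morse function on an inner stratum has vanishing normal derivative at its critical points, so those points remain critical for the larger stratum, violating the distinctness condition of \cref{def: indiv/nested Morse}; you must add a non-vanishing normal component and then perturb generically \emph{without} disturbing the already-fixed inner function, which takes some care. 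The paper's density argument sidesteps this entirely. For the no-critical-point slabs you propose a controlled gradient flow with respect to a nesting-adapted metric; the paper instead quotes \cref{lem: Morse no crit values} (from \cite{SMT88} I.3.2) to conclude $W_I \cong M_{I-1}\times[0,1]$ and then applies \cref{cor: automorphisms are diffeos up to pseudo-isotopy}, which characterizes exactly when two such mapping cylinders are equal in $\Cob_I$. Your adapted-metric idea is sound (it is the controlled-vector-field mechanism underlying the stratified isotopy lemma you are implicitly reproving), but the contractibility-of-metrics step you single out as the heart of the argument is not actually needed for the theorem as stated: once the slab is identified with \emph{some} mapping cylinder, \cref{cor: automorphisms are diffeos up to pseudo-isotopy} already supplies well-definedness up to nested pseudo-isotopy. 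Your handling of the one-critical-point case via the classical Morse lemma on the relevant stratum agrees in conclusion with \cref{thm:nested_critical_points}.
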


The connected components of $\Cob_I$ are equivalence classes of nested manifolds up to nested cobordism and this collection forms a group under disjoint union. Wall \cite{wall1961cobordism} showed that nested cobordism groups split as a direct sum of the cobordism groups in the dimensions involved.

A more classical notion of cobordism of nested manifolds leaves the background manifold invariant. 
In this setting, nested manifolds $(M,N)$ and $(M',N')$, where $N, N'$ are the respective submanifolds, are considered cobordant if $M'$ is diffeomorphic to $M$ and there is a cobordism $V$ between $N$ and $N'$ that embeds into $M\times I$, respecting the embeddings on either boundary. 
A celebrated theorem by Pontryagin, later extended by Thom, shows that cobordism classes of framed $k$-manifolds inside a background manifold $M^m$, where the background cobordism is cylindrical, is given by homotopy classes of maps from $M$ to $S^{m-k}$  \cite{pontryagin55}.
In particular this implies that cobordism groups of all framed manifolds (which can be thought of as sitting inside a large sphere) are isomorphic to the stable homotopy groups of spheres.
A topological cobordism category of nested manifolds inside a fixed background manifold was studied in \cite{randal2011embedded}.

In this paper, we will study these cylindrical background cobordisms within $\Cob_{1<2}$, the nested cobordism category in which objects are circles with marked points and morphisms are surfaces decorated with lines connecting the points. In particular, the objects of this subcategory $\cyl$ are circles with marked points and morphisms are $1$-dimensional cobordisms on a cylinder, which we call ``striped cylinders.'' 
We further simplify this category by quotienting out contractible circles.
Using methods similar to those of \cite{kock_2003, penneys}, we give a generators and relations presentation of $\cyl$.

\begin{theorem}[\cref{thm: gens of cyl}, \cref{thm: minimal list of generators}, \cref{cor: relns in cyl}]
The objects of $\cyl$ are generated by circles with marked points, $S^1_k$, with points labeled $0,1\dots, k-1$. The morphisms are generated by striped cylinders that are the identity, have twisted stripes ($\tw_k$), have a birth at marked point $i$ ($\bi^i_k$) or have a death at marked point $i$ ($\de^i_k$); see~\cref{intro: gens}.

\begin{figure}[h!]
     \centering
     \begin{subfigure}[b]{0.2\textwidth}
         \centering
         \includegraphics[width=\textwidth, trim={3cm 21.5cm 12cm 3.5cm},clip]{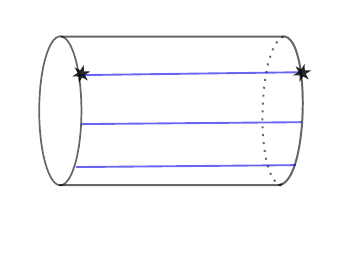}
         \caption{$\id_k$}
         \label{fig: identity}
     \end{subfigure}
     \begin{subfigure}[b]{0.2\textwidth}
         \centering
         \includegraphics[width=\textwidth, trim={11cm 21.5cm 4cm 3.5cm},clip]{Generators.pdf}
         \caption{$\tw_k$}
         \label{fig: twist}
     \end{subfigure}
     \begin{subfigure}[b]{0.2\textwidth}
         \centering
         \includegraphics[width=\textwidth, trim={3cm 14.2cm 12cm 10cm},clip]{Generators.pdf}
         \caption{$\bi_k^i$}
         \label{fig: birth}
     \end{subfigure}
     \begin{subfigure}[b]{0.2\textwidth}
         \centering
         \includegraphics[width=\textwidth, trim={11cm 14.2cm 4cm 10cm},clip]{Generators.pdf}
         \caption{$\de_k^i$}
         \label{fig: death}
     \end{subfigure}
        \caption{Generating cobordisms}
	\label{intro: gens}
\end{figure}
A complete description of the relations is given in~\cref{thm: minimal list of generators}, which includes the usual relations on 1-dimensional cobordisms (the snake relation, etc.) as well as relations involving how the twist interacts with the birth and death cobordisms.
\end{theorem}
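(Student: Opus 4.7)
The strategy is to extract both generators and relations from the Cerf decomposition theorem stated above, specialized to the cylindrical-background setting of $\cyl$. For the generators, apply the Cerf decomposition to an arbitrary morphism $W$ of $\cyl$. Because $W$ sits inside $\Cob_{1<2}$ with a cylindrical top-dimensional background and with contractible circles quotiented out, the ambient surface contributes no critical points to any nested Morse function on $W$. Hence every critical point lies on the $1$-dimensional stripe submanifold, and thus has index either $0$ or $1$: these give the births $\bi_k^i$ and deaths $\de_k^i$, indexed by the $k$ positions on the boundary circle. The no-critical-point elementary cobordisms are mapping cylinders of nested pseudo-isotopy classes of self-diffeomorphisms of $(S^1, k \text{ marked points})$; since the mapping class group of this pair is cyclic and generated by the rotation, the identity $\id_k$ and the twist $\tw_k$ exhaust these pieces. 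Decomposing each object into its connected components gives $S^1_k$ as the object generators.

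For the relations, I would follow the approach of Kock and Penneys cited in the excerpt. Two Cerf decompositions of the same morphism are connected, by parametric Morse theory, via a finite sequence of standard moves: reorderings of critical points at different heights, index-interchange moves between adjacent critical points of different indices, and birth--death cancellations. Each of these moves, translated to the striped cylinder, corresponds to one of the relations listed in \cref{thm: minimal list of generators}: the snake and bubble identities come from the classical 1-dimensional moves among births and deaths, while the novel twist--birth and twist--death relations come from sliding a birth or death past a rotation of the circle of marked points, which permutes the position labels cyclically. To conclude, I would set up a normal form --- for instance, all births first (in a canonical order), then a power of the twist, then all deaths --- and verify that the listed relations suffice to rewrite any word in the generators into this form.

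The main obstacle is establishing completeness, i.e., that no further relation has been missed. Bookkeeping the position labels $i$ under the $\bbz/k$-action of $\tw_k$ is delicate: a birth inserts two new marked points, shifting all higher labels, and a twist must be shown to commute with births and deaths exactly as claimed up to the appropriate relabeling. I would address this by exhibiting a faithful combinatorial model for $\cyl$ --- namely, planar-isotopy classes of stripe diagrams drawn on a cylinder, with marked points on the two boundary circles --- and check that (a) every such diagram is realized by a word in the generators, and (b) two words give the same diagram if and only if they are related by the listed relations. Since geometric morphisms in $\cyl$ are in bijection with such diagrams (by smoothing to a generic position and recording the stripes), this would force the presentation to be complete.
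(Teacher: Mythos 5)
Your generator argument matches the paper's: a Cerf decomposition of a striped-cylinder morphism has no critical points on the ambient annulus, so elementary pieces with a single critical point are the index-$0$ and index-$1$ births $\bi^i_k$ and deaths $\de^i_k$, while pieces without critical points are mapping cylinders of boundary self-diffeomorphisms up to pseudo-isotopy, which on $(S^1,k\text{ points})$ are precisely the powers of $\tw_k$. That part is sound.

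The completeness argument, however, has two genuine gaps. First, the claimed bijection between morphisms of $\cyl$ and ``planar-isotopy classes of stripe diagrams on the cylinder'' is false. Morphisms of $\cyl$ are diffeomorphism classes (rel boundary) of striped cylinders with contractible circles deleted, and a Dehn twist of the annulus is diffeomorphic rel boundary to the identity cylinder but is \emph{not} isotopic to it; this is exactly what relation (8), $\tw_k^k=\id_k$, records, and it is the distinction (noted in Section 4.1 of the paper) between $\cyl$ and the affine diagram category $\cat D^a$. Your proposed combinatorial model is really a model for $\cat D^a$, not for $\cyl$, so without separately imposing the Dehn-twist relation and the removal of contractible circles, part (b) of your plan (``two words give the same diagram iff they are related by the listed relations'') would fail. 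Second, you do not account for \emph{bracelets}, i.e.\ non-contractible closed stripes in the cylinder. These appear for morphisms $S^1_0\to S^1_0$ with no through-strings and cannot be cancelled or slid to one end: the paper's normal form $C = C_{III}\circ C_{II}\circ C_{I}$ (Theorem~\ref{thm: unique decomp}) explicitly reserves the middle piece $C_{II}$ for a product of bracelets $(\de_2^1\circ\bi_0^0)^{\br(C)}$. Your proposed normal form ``all births, then a power of the twist, then all deaths'' is not obviously attainable (or unique) in the presence of multiple bracelets, and you would need to verify it handles them or switch to a normal form with a designated bracelet block.

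For comparison, the paper does not argue via parametric Morse theory moves. Instead it introduces a small set of topological invariants of a striped cylinder (the cyclically ordered death and birth indices $\dind,\bind$, the through-string count $\tau$, the twist datum $t_0$, and the bracelet number $\br$), shows these are invariant under the defining equivalence on morphisms, and then proves that the relations of Theorem~\ref{thm: minimal list of generators} suffice to put any word into the unique normal form determined by its invariants. That invariant-based route sidesteps the need to classify Cerf moves on an annulus (where one would have to confront exactly the Dehn-twist and bracelet phenomena you elided) and is where most of the paper's work goes. If you want to rescue your plan, replace the ``planar isotopy'' model by the quotient by Dehn twists and contractible circles, and add a bracelet block to your normal form; at that point your argument essentially reduces to the paper's.
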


The motivation for providing a generators and relations description of $\cyl$ (and $\Cob_{1<2}$ more generally) is to understand the explicit data needed to construct functors out of these cobordism categories. As a consequence of the previous theorem, we obtain such a description for a functor $\cyl\to \cat C$, where $\cat C$ is any category. We call these functors \emph{$\cyl$-objects} in $\cat C$.

\begin{corollary}[{\cref{cor:cyl rep data}}]
A $\cyl$-object in $\cat C$ is specified by the following data:\begin{itemize}
        \item for each $n\geq 0$, an object $c_n\in \cat C$,
        \item for each $n\geq 0$, an isomorphism $t_n\colon c_n\to c_n$,
        \item for each $n\geq 2$, maps $d_n^i\colon c_n\to c_{n-2}$ for $0\leq i\leq n-1$,
        \item for each $n\geq 0$, maps $s_n^j\colon c_n \to c_{n+2}$ for $0\leq j \leq n+1$,
    \end{itemize}
    subject to the relations
    \begin{enumerate}
        \item[(i)] $d_{k-2}^i\circ d_{k}^j = d_{k-2}^{j-2}\circ d_{k}^{i} $ for $ i < j - 1 $,
        \item[(ii)] $s_{k+2}^i\circ s_{k}^j=s_{k+2}^{j+2}\circ 
        s_k^{i}$ if $i \leq j$,
        \item[(iii)] $d_{n+2}^j\circ s_n^i = \left\{\begin{array}{cc}
             \id & i=j-1,j,j+1; \\
             s_{n-2}^{j-2} \circ d^i_n & i<j-1;\\
             s_{n-2}^j \circ d_n^{i-2} & i> j+1,
        \end{array}\right.$
        \item[(iv)] $t_n^n = \id$,
        \item[(v)] $t_{n+2}\circ s_n^j = s_n^{j+1}\circ t_n$,
        \item[(vi)] $t_n\circ d_{n+2}^i = d_{n+2}^{i+1}\circ t_{n+2}$.
    \end{enumerate}
\end{corollary}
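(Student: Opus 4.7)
The plan is to invoke the generators-and-relations presentation of $\cyl$ established in \cref{thm: gens of cyl} and \cref{thm: minimal list of generators}: a functor out of a category freely presented by generators and relations is uniquely specified by an assignment on generators that respects every relation. First I would unpack the object data. A functor $F\colon \cyl \to \cat C$ assigns an object $c_n := F(S^1_n)$ to each $n \ge 0$, producing the first bullet. Next, on morphism generators, I would set $t_n := F(\tw_n)$, $s_n^j := F(\bi^j_n)$, and $d_n^i := F(\de^i_n)$, yielding the remaining three bullets. That $t_n$ is an isomorphism rather than merely a morphism follows from the fact that $\tw_n$ is invertible in $\cyl$: its inverse is realized by rotating the stripes in the opposite direction, or equivalently by $\tw_n^{n-1}$ in view of relation (iv) below.

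Having matched the generating data, the next step is to check that the relations (i)--(vi) are exactly the image under $F$ of the relations listed in \cref{thm: minimal list of generators}. Relations (i) and (ii) are the commutation relations among parallel deaths and among parallel births respectively, obtained geometrically by sliding one critical point past another on a striped cylinder. Relation (iii) is the birth-death interaction split into three cases according to whether the death cap immediately closes off the stripe just created by a birth (the snake/zig-zag relation, giving the identity) or whether the cup and cap are positioned apart and can be exchanged, with the appropriate shift in indices. Relation (iv) encodes the geometric fact that rotating the marked points on $S^1_n$ a full $n$ times returns the identity cobordism. Relations (v) and (vi) express how conjugation by the twist shifts the position of a birth or death by one marked point. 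In each case, the translation is direct: by \cref{thm: minimal list of generators} these are precisely the generating relations of $\cyl$, and by functoriality they must hold in $\cat C$; conversely any assignment satisfying (i)--(vi) defines a functor by the universal property of the presentation.

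The main obstacle is careful bookkeeping of the marked-point indices in (iii), (v), and (vi), since a birth $\bi^j_n$ inserts two new marked points (renumbering those to the right), while a death $\de^i_n$ removes two (again with renumbering). One has to check that the index shifts $j-2$ versus $j$ in (iii), and $j+1$ versus $i+1$ in (v) and (vi), agree with the geometric relabeling convention chosen for the generators. Once this combinatorics is verified against the convention fixed in the statements of \cref{thm: gens of cyl} and \cref{thm: minimal list of generators}, the corollary follows formally, as specifying a functor $\cyl \to \cat C$ is the same as specifying the list of data above satisfying the list of equations.
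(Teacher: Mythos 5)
Your proposal is correct and matches the paper's approach: the corollary is an immediate consequence of \cref{cor:cyl gens and relns}, since a functor out of a category presented by generators and relations is uniquely determined by an assignment on generating objects and morphisms respecting those relations, and the relations (i)--(vi) are exactly the translations of relations (1)--(8) of \cref{thm: minimal list of generators} under $c_n \mapsto F(S^1_n)$, $t_n \mapsto F(\tw_n)$, $s_n^j \mapsto F(\bi_n^j)$, $d_n^i \mapsto F(\de_n^i)$. Your identification of $\tw_n^{n-1}$ as the inverse of $\tw_n$ (via relation (iv)/(8)) correctly justifies that $t_n$ must be an isomorphism, which the paper states without comment.
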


This definition is reminiscent of that of a cyclic object \cite{connes, loday:92}. In \cref{sec:cyclic objects}, we detail this connection and show the following result.

\begin{theorem}[{\cref{thm:cyl and cyclic cat}, \cref{thm:inclusion of c2 cyc into cyl}, \cref{cor:cyl0 obj gives c2 cyclic}}]
    There is an inclusion of the cyclic category into $\cyl$, and consequently every functor $\cyl\to \cat C$ determines a cyclic object.
\end{theorem}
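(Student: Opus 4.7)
The plan is to construct an explicit functor $\iota\colon \Lambda \hookrightarrow \cyl$, where $\Lambda$ denotes Connes' cyclic category, and then observe that precomposition with $\iota$ converts $\cyl$-objects into cyclic objects. On objects I would set $\iota([n]) := S^1_{2(n+1)}$. The doubling is forced: each coface/codegeneracy in $\Lambda$ changes degree by one, whereas each generating birth or death in $\cyl$ changes the number of marked points by two.

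On generators I would send the cyclic rotation $\tau_n$ to the squared twist $\tw_{2(n+1)}^{2}$, the coface $\delta^i\colon [n-1]\to[n]$ to the birth $\bi^{2i}_{2n}$, and the codegeneracy $\sigma^j\colon [n+1]\to[n]$ to the death $\de^{2j}_{2(n+2)}$. The numerical consistency check that makes this assignment available at all is that the cyclic identity $\tau_n^{n+1}=\id$ is matched by $\tw_{2(n+1)}^{2(n+1)}=\id$, which is relation (iv) of \cref{cor:cyl rep data}.

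I would then verify that the defining relations of $\Lambda$ (the simplicial identities plus the cyclic compatibility relations) hold under these assignments, using the generators-and-relations presentation of $\cyl$ from \cref{thm: minimal list of generators} and its packaged form in \cref{cor:cyl rep data}. The simplicial identities among cofaces and codegeneracies follow directly from relations (i)-(iii) after the substitutions $i\mapsto 2i$, $j\mapsto 2j$. The mixed cyclic-face relation $\tau_n\delta^i=\delta^{i-1}\tau_{n-1}$ and its degeneracy analog reduce to two applications of (v) and (vi), since $\iota(\tau)=\tw^2$. Once $\iota$ is confirmed to be a well-defined functor, the second statement of the theorem is formal: for any $F\colon\cyl\to\cat C$, the composite $F\circ\iota\colon\Lambda\to\cat C$ is a cyclic object in $\cat C$, using the self-duality of $\Lambda$ to reconcile variance conventions if necessary.

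The main obstacle will be the wraparound boundary cases of the cyclic-face and cyclic-degeneracy relations, namely $\tau_n\delta^0=\delta^n$ and $\tau_n\sigma^0=\sigma^n\tau_{n+1}^2$, where the indexing of the birth/death cobordisms wraps modulo $2(n+1)$. These are the genuinely cyclic (as opposed to merely simplicial) relations, and verifying them will require iterating (v) or (vi) all the way around the circle and invoking the exact order of $\tw$; in particular, the cancellation $\tw_{2(n+1)}^{2(n+1)}=\id$ is essential for closing up the iterated index shifts. Everything else should reduce to careful bookkeeping of even-indexed marked points.
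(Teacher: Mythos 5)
Your proposal is correct and follows essentially the same approach as the paper: the paper's proof of \cref{thm:cyl and cyclic cat} defines the inclusion by $[n]\mapsto S^1_{2(n+1)}$, $d_n^i\mapsto d_{2n+2}^{2i}$, $s_n^j\mapsto s_{2n+2}^{2j}$, $t_n\mapsto t_{2n+2}^2$, and then verifies that the relations of $\Lambda^{\rm op}$ (\cref{defn:cyclic cat}) are satisfied in $\cyl$ using \cref{thm: minimal list of generators}, just as you do. Your variance convention (working with $\Lambda$ and sending cofaces to births rather than working with $\Lambda^{\rm op}$ and sending faces to deaths) is the dual picture; the paper explicitly notes in the remark after \cref{thm:cyl and cyclic cat} that the ``op'' is superfluous by self-duality of both categories, so this is not a genuine difference.
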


Inspired by the cyclic bar construction, we define a ``cylinder bar construction'' for finite-dimensional vector spaces (see \cref{defn:bar construction}), which is a $\cyl$-object. We expect this construction to give rise to interesting algebraic structures and
plan to study it further in future work. 
We also show that $\cyl$-objects are closely related to representations of affine Temperley-Lieb algebras \cite{Graham-Lehrer:98, fan/green:97, green:98, erdmann/green:98} and annular Temperley-Lieb algebras \cite{jones:01, Jones, penneys}.

\begin{theorem}[{\cref{cor:cyl object is affine TLA}, \cref{cor:cyl0 obj is annular TLA}}]
    Every $\cyl$-object determines an affine Temperley-Lieb algebra and an annular Temperley-Lieb algebra. 
\end{theorem}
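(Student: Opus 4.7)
The plan is to produce, from any $\cyl$-object $F\colon \cyl\to \cat C$, a representation of the (affine or annular) Temperley--Lieb algebra on $n$ strands on the object $c_n = F(S^1_n)$, for each $n$. The standard Temperley--Lieb generators $e_0,\ldots,e_{n-1}$ will be sent to cap--cup endomorphisms assembled from the birth and death data of the $\cyl$-object, and the rotation generator will be sent to the twist $t_n$.

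First I would fix a presentation of the affine Temperley--Lieb algebra on $n$ strands: generators $e_0,\ldots,e_{n-1}$ (cyclically indexed) together with an invertible generator $\tau$, subject to $e_i^2=\delta e_i$, $e_i e_{i\pm 1}e_i = e_i$, $e_i e_j=e_j e_i$ whenever $|i-j|>1$ modulo $n$, and $\tau e_i\tau^{-1}=e_{i+1}$. The annular Temperley--Lieb algebra is obtained from this by the additional relation $\tau^n=\id$. Given a $\cyl$-object in $\cat C$, I would then define
\[
e_i \;:=\; s_{n-2}^{\,i}\circ d_n^{\,i}\;\in\;\mathrm{End}_{\cat C}(c_n),\qquad \tau\;:=\;t_n.
\]

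Next I would verify each Temperley--Lieb relation by invoking the corresponding relation among (i)--(vi). The idempotency $e_i^2 = e_i$ (so $\delta = 1$) is forced by case (iii) with $i=j$, reflecting geometrically that stacking a birth directly on top of a death at the same position produces a contractible circle, which has been quotiented away in $\cyl$. The identity $e_i e_{i\pm 1}e_i = e_i$ likewise reduces to two applications of the $\id$-case of (iii). The far commutation $e_i e_j = e_j e_i$ for $|i-j|>1$ follows from (i), (ii), and the ``slide'' cases of (iii), used to commute the inner $d$--$s$ pair past one another. Conjugation $\tau e_i \tau^{-1}=e_{i+1}$ is immediate from (v) and (vi). This establishes the affine claim. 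For the annular claim, relation (iv), $t_n^n=\id$, supplies exactly the extra relation $\tau^n=\id$, so the representation of the affine algebra factors through the annular quotient; equivalently, working directly in the quotient of $\cyl$ by contractible circles gives the annular algebra.

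The main obstacle is the bookkeeping around the cyclic indexing and the precise placement of the cap--cup strands. Relations (i)--(vi) are stated with linear indices $0\leq i\leq n-1$, whereas the Temperley--Lieb generators $e_i$ are indexed cyclically, so the ``boundary'' generator $e_{n-1}$ (which couples positions $n-1$ and $0$) is not covered directly by (iii). The way around this is to first conjugate by $\tau$ using (v), (vi), and (iv) to reduce every cyclic-boundary relation to a linear one already covered in the corollary. Once this conjugation trick is in place, the remaining verification is systematic.
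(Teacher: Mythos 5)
Your affine argument is a legitimate alternative to what the paper does, but your annular argument is proving a different statement than the paper's, and that is a genuine gap.

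For the affine case, you build the $\cat T^a(n)$-action on $c_n$ by hand, setting $e_i := s^i_{n-2}\circ d^i_n$ and $\tau := t_n$ and verifying the Temperley--Lieb relations against (i)--(vi). The verifications you sketch (idempotency via (iii) with $i=j$, the braid-like relation via the $\id$-cases of (iii), far commutation via (i)--(iii), conjugation via (v)--(vi), and the conjugate-to-linear-indices trick for the wraparound generator $e_{n-1}$) are correct, and they produce a representation with $\delta=1$, matching the fact that contractible loops have been killed in $\cyl$. The paper instead argues categorically: it identifies Graham--Lehrer's diagram category $\cat D^a$ with (a mild modification of) $\cyl^a$, constructs the quotient functor $\cat D^a\to\cyl^a\to\cyl$, and observes that restricting any $\cyl$-object along it yields a $\cat D^a$-object, hence (when $\cat C=\mathrm{Mod}(R)$) an enriched $\cat T^a$-module (\cref{cor:cyl and Da}, \cref{cor:cyl object is affine TLA}). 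What the categorical route buys you is that you never need to prove that your list of relations really presents $\cat T^a(n)$, which your write-up asserts but does not justify (the paper references the Erdmann--Green presentation but does not itself rely on it); if you insist on your direction you should pin down the exact presentation of the affine Temperley--Lieb algebra you are using and cite or prove that it matches the diagrammatic definition at $\delta=1$.

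For the annular case, the gap is real. You take ``annular Temperley--Lieb'' to mean the affine algebra modulo $\tau^n=\id$. That is \emph{not} the algebra the paper means: the paper follows Jones and Penneys, whose annular Temperley--Lieb category $\mathbf{Atl}$ is built from \emph{shaded} annular tangles. The object $[n]\in\mathbf{Atl}$ corresponds to $S^1_{2n}$ (twice as many marked points, and only even $S^1_k$ appear), and the rotation generator $t\colon[n]\to[n]$ must preserve shading, so it is sent to $\tw^2_{2n}$, not $\tw_{2n}$. Accordingly the paper's construction is a functor $\mathbf{Atl}\to\cyl^a_0\to\cyl_0$ that factors through the even subcategory and through the \emph{square} of the twist, and \cref{cor:cyl0 obj is annular TLA} says every $\cyl_0$-object is a shaded annular object. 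Your ``affine $+\,\tau^n=\id$'' presentation lives on all of $\cyl$, uses the twist itself rather than its square, and carries no shading data; it is a different algebra. To fix this you would need to restrict attention to $\cyl_0$, re-index so that $[n]$ maps to $c_{2n}$, replace $\tau = t_n$ with $\tau = t_{2n}^2$, define the $e_i$ via $s^{2i}_{2n-2}\circ d^{2i}_{2n}$ (matching the shaded cap/cup positions), and check the relations from Penneys's presentation of $\mathbf{Atl}$ rather than your unshaded one.
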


\textit{Topological quantum field theories} (TQFTs) are defined as symmetric monoidal functors out of a cobordism category into a linear category such as ${\rm Vect}_k$.
In this light we can think of $\cyl$-objects in $\Vect_k$ as TQFTs on the striped cylinder cobordism category, although we note that $\cyl$
does not have an interesting symmetric monoidal structure.
Within mathematical physics, TQFTs 
(and their generalizations) can be viewed as mathematical models for quantum field theories in which transition amplitudes depend only on topological properties of the system. This occurs, for example, in the case of Chern-Simons theory, 
a central object of study across topology, gauge theory, and representation theory.
TQFTs are increasingly ubiquitous in the theoretical physics literature as well, where they have a wide range of applications including modeling anomalies \cite{Dai-Freed, Witten-anomaly, freed2019lectures} and the low energy behavior of lattice models in condenced matter physics \cite{Freed-Hopkins, walker201131tqfts}.

Within algebraic topology, TQFTs represent information about the geometric gluing structure of manifolds. 
An example of this interpretation is the ``folklore theorem'' giving an equivalence of categories between 2-dimensional TQFTs and commutative Frobenius algebras
over $k$ \cite{Dijkgraaf89, kock_2003, Abrams96}. There are many variants of this theorem in the literature that consider different cobordism categories \cite{schommerpries2014classification, hanbury-open-closed, baas-open-closed, bartlett2015modular} and which play an important role in the physics literature 
\cite{Moorebranelectures, Johnson_Freyd_2022}.
Defining TQFTs on nested cobordism categories enlarges
the connection between algebraic structures and gluing of geometric objects, and could potentially lead to new connections with physical systems.
In upcoming work we will extend our scope to consider the full category $\Cob_{1<2}$ of striped surface cobordisms, aiming to provide a classification of $2$-dimensional nested TQFTs. We expect that this work will be highly related to the study of $2$-dimensional defect TQFTs \cite{Carqueville}. 

\subsection{Outline} 
In \cref{section2} we define the nested cobordism category $\Cob_I$ in full generality and develop a version of Morse theory for nested manifolds by application of stratified Morse theory, which is summarized in \cref{sec: SMT}. We use this nested Morse theory to give a list of generators with zero or one critical point(s) for $\Cob_I$.
In \cref{section3:cyl} we restrict our scope to the category $\cyl$ of striped cylinders. We establish a generators-relations presentation of this category, with the use of topological invariants and the factorization of a morphism into a unique normal form.
In \cref{section6:TQFTs}, we use the generators and relations of $\cyl$ to give a full description of the data needed to build a $\cyl$-object in a general category $\cat C$, and we discuss the connection to affine and annular Temperley-Lieb algebras (\cref{section5:TLalgebras} and \cref{sec:annular TLA}) and cyclic objects (\cref{sec:cyclic objects}) as well as defining the doubling and cylindrical bar constructions (\cref{sec:cyclic objects,sec:bar construction}).

\subsection{Acknowledgements}
This paper began in conversations at the 2023 Women in Topology IV Workshop and we thank the organizers of this program and the Hausdorff Research Institute for Mathematics for their hospitality during the workshop. 
We would also like to thank the Foundation Compositio Mathematica, the Foundation Nagoya Mathematical Journal and the $K$-theory Foundation for financial support for this event. We are also grateful to the American Institute of Mathematics' SQuaREs program, whose support was crucial for the completion of this project.
The first-named author was partially supported by the National Science Foundation (NSF) grant DGE-1845298.
The second-named author was supported by the Dutch Research Council (NWO) through the grant VI.Veni.212.170.
The third-named author was partially supported by NSF grant DMS-2316646.
The fourth-named author was partially supported by NSF grant DGE-2141064.
The authors also benefited from conversations with   
\'Alvaro del Pino G\'omez,
Tony Giaquinto,
Inbar Klang,
Peter Kristel,
Alexis Langlois-R\'emillard,
Cary Malkiewich,
Kyle Miller,
Emily Peters,
Thomas Rot,
Mateusz Stroi\'nski,
and
Lauran Toussaint.
We also thank the anonymous referees for useful input.

\section{Nested manifolds and cobordism}
\label{section2}

\subsection{The nested cobordism category $\Cob_I$}
In this section we set up the language and theory of nested manifolds and cobordisms, following \cite{Hoekzema}. A nested manifold can be thought of as a manifold with a collection of lower-dimensional manifolds nicely embedded within it. 

\begin{definition}
    Given a sequence $I=(d_1<\dots<d_n)$ of non-negative integers, a \textit{nested (compact) $I$-manifold} is an ordered tuple\[
    M = (M_{d_n},\dots,M_{d_1})
    \] where $M_{d_n}$ is a smooth, (compact) $d_n$-manifold and for each $1\leq i\leq n-1$, $M_{d_{i}}$ is a closed subset of $M_{d_{i+1}}$ which is diffeomorphic to a smooth $d_{i}$-dimensional manifold.
\end{definition}

\begin{figure}[h!]
	\centering
	\includegraphics[width=0.5\textwidth]{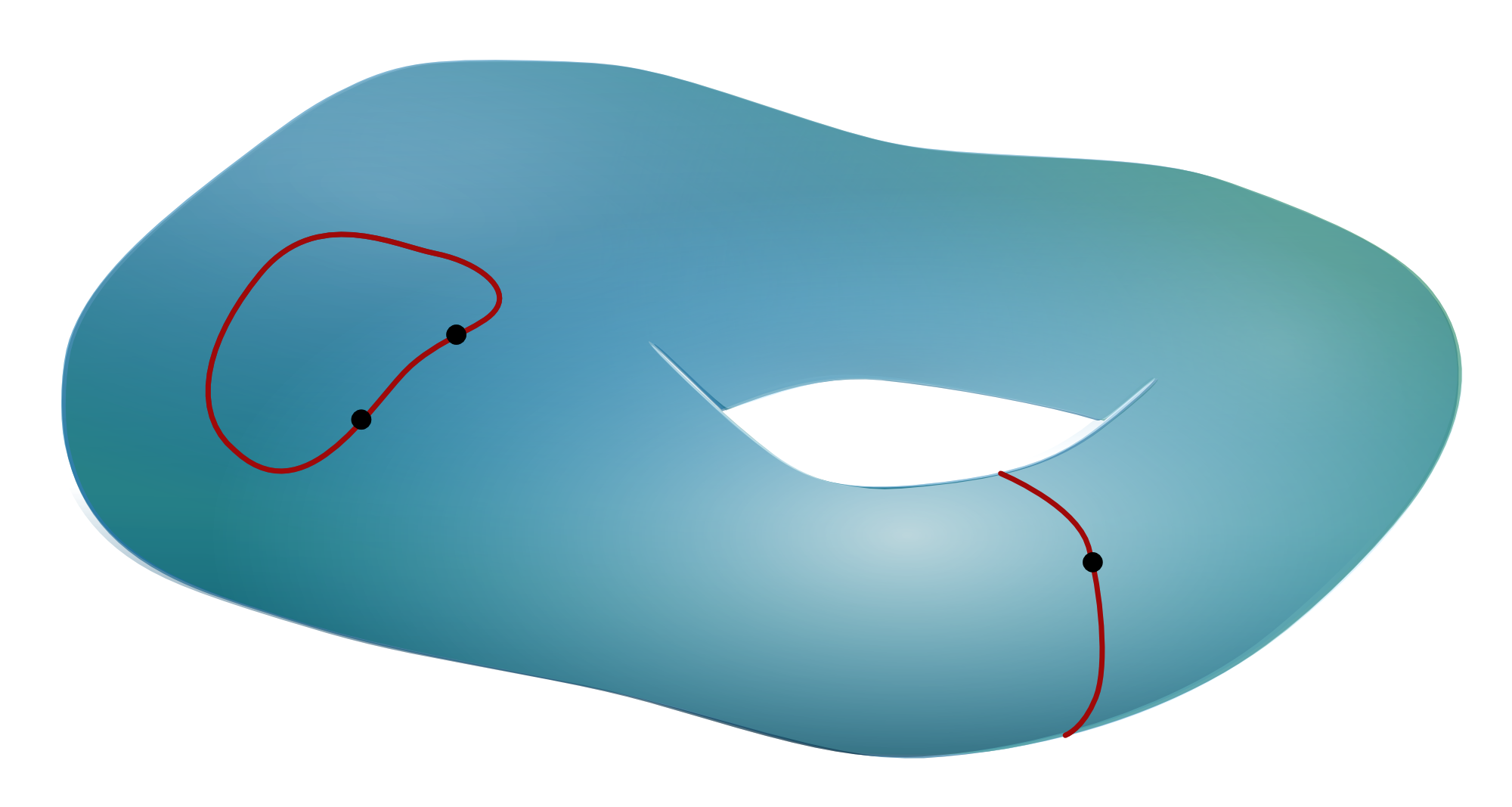}
	\caption{An example of a nested manifold $M_{0<1<2}$: a surface endowed with a 1-dimensional submanifold, which is itself endowed with a 0-dimensional submanifold.}
	\label{manifold}
\end{figure}

Nested manifolds are a special case of stratified manifolds, see~\cref{lem: nested surface is strat}. In particular, stratified manifolds allow for singularities at substrata that are not allowed for nested manifolds; see~\cite{SMT88} for more background on stratified manifolds.

\begin{remark}
    Note that we think of a nested manifold as a manifold together with a sequence of subsets. This means we do not include additional data of the embeddings of the manifolds into each other.
\end{remark}

For the purpose of this paper we will define an orientation on a nested manifold $M=(M_{d_n}, \dots, M_{d_1})$ as an orientation on the top dimensional-manifold $M_{d_n}$. Note that this does not necessarily induce an orientation on the submanifolds $M_{d_i}$.
Other definitions are possible and may be of interest in future work.

\begin{definition}
    A \textit{smooth map} $f\colon M\to M'$ of nested $I$-manifolds is an ordered $I$-tuple of smooth maps $f=(f_n,\dots, f_1)$ so that the following diagram commutes for each $1\leq i\leq n$:
    \[\begin{tikzcd}
        M_{d_{i-1}} \ar[r, "f_{i-1}"] \ar[d] & M'_{d_{i-1}}\ar[d] \\
        M_{d_{i}} \ar[r, swap, "f_i"] & M'_{d_{i}}
    \end{tikzcd}
    \] where the vertical arrows are the inclusion of the submanifolds. In particular, a map $f\colon M_{d_{n}} \rightarrow M'_{d_{n}}$ is a map of nested manifolds if for each $i$ the image of the restriction $f_i = f|_{M_{d_{i}}}$ is contained in $M'_{d_{i}}$.

    We call $f$ a nested diffeomorphism if each $f_i$ is a diffeomorphism. We call $f$ orientation-preserving if each $f_i$ is orientation-preserving.
\end{definition}

We say an $I$-manifold $M$ is \textit{closed} if $M_{d_{i}}$ is closed for all $i$. We will also need $I$-manifolds \textit{with boundary}, which intuitively means that all of the $\bndry M_i$ sit inside $\bndry M_{d_{n}}$ as a nested $I-1$ manifold, where $I-1 :=(d_1-1<d_2-1<\dots<d_n-1)$. 

\begin{definition}
A nested $I$-manifold with boundary is an $I$-manifold $M=(M_{d_{n}},\dots, M_{d_{1}})$ where $M_{d_{n}}$ is a manifold with boundary, such that $\bndry M_{d_i}$ is a topologically closed subset of $\bndry M_{d_{i+1}}$.
We also require that any closed (boundaryless and compact) components of $M_{d_{j}}$ lie in the interior of all higher-dimensional manifolds (i.e. the closed components of $M_{d_{j}}$ do not intersect $\bndry M_{d_{j+k}}$ for any $k>0$).
Moreover, we require the normal bundle of $\bndry M_{d_{i}}\subset M_{d_{i}}$ to be a subbundle of the normal bundle of $\bndry M_{d_{i+1}}\subset M_{d_{i+1}}$ restricted to $\bndry M_{d_{i}}$. 

\end{definition}

\begin{definition}
    We call two closed $(I-1)$-manifolds $M_{I-1}$ and $M'_{I-1}$ \emph{nested cobordant} if there is a compact $I$-manifold $W_{I}$ with boundary such that $\bndry W \cong M \amalg M'$. The nested manifold $W_I$ is said to be a \textit{nested cobordism} between $M$ and $M'$.
    
    If $M$ and $M'$ are oriented nested manifolds, then we call them oriented cobordant if there is an oriented $I$-manifold $W$ such that $\bndry W \cong M \amalg \overline{M'}$ where the nested diffeomorphism is orientation-preserving. Here $\overline{M'}$ denotes the nested manifold $M'$ with the orientation reversed for every $M_{d_i}$. 

The data of an oriented cobordism from $M$ to $M'$, also written as $W_I\colon M_{I-1}\Rightarrow M'_{I-1}$, is the oriented $I$-manifold $W$ along with orientation-preserving nested diffeomorphisms 
    \[\begin{tikzcd}
            M \ar[r, hook] & W & \overline{M'}\ar[l, hook']
    \end{tikzcd}
    \] which map $M$ and $M'$ diffeomorphically (as nested manifolds) onto the in- and out-boundary of $W$, respectively.
\end{definition}

As in the non-nested setting, there are many different cobordisms between two $I$-manifolds  that are diffeomorphic. 

\begin{definition}\label{defn:equiv reln on nested cobs}
    Two nested cobordisms from $M$ to $M'$ are \textit{diffeomorphism equivalent} if there is a diagram\[
    \begin{tikzcd}
        & W \ar[dd, "f"] &\\
        M\ar[ru, hook]\ar[rd, hook] && \overline{M'}\ar[ul, hook'] \ar[ld, hook']\\
        & W' &
    \end{tikzcd}
    \] 
    so that $f$ is a nested diffeomorphism preserving the boundaries pointwise. \end{definition}

\begin{definition}
   We define $\Cob_I$ to be the category with objects closed $(I-1)$-manifolds and morphisms diffeomorphism equivalence classes of nested $I$-cobordisms between the objects.
\end{definition}

In order for this category to be well-defined, we need to show that working with equivalence classes of nested cobordisms also allows us to model composition using pushouts; the following theorem is the analogue of \cite[Theorem 1.4]{milnorhcobordism}, \cite[Theorem 1.3.12]{kock_2003} for composition in the non-nested cobordism category.

\begin{theorem}\label{thm: pushouts give composition}
    Let $W_I\colon M_{I-1}\Rightarrow M'_{I-1}$ and $W'_I\colon M'_{I-1}\Rightarrow N_{I-1}$ be two nested cobordisms. Then up to diffeomorphism we can give the pushout $W\cup_{M'} W'$ the structure of a nested $I$-manifold such that the embeddings \[
    M \hookrightarrow W\cup_{M'} W' \hookleftarrow \overline{N}
    \] are orientation-preserving nested diffeomorphisms onto their images.
\end{theorem}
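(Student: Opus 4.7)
The plan is to reduce to the classical fact that gluing two ordinary cobordisms along a common boundary produces a smooth manifold, and then to promote this level by level to the nested setting using compatible collar neighborhoods. Recall that in the non-nested case, one glues $W$ and $W'$ along $M'$ as topological spaces and uses a choice of collar neighborhood of $M'$ in each to transfer a smooth structure to the pushout; the resulting manifold is well-defined up to diffeomorphism relative to the rest of the boundary, since any two collar neighborhoods are isotopic. The task here is to carry out this procedure simultaneously on all the levels $(W_{d_i}, W'_{d_i}, M'_{d_i})$ in a way that respects the nested submanifold structure.

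First, I would establish a \emph{nested collar neighborhood theorem}: for any nested cobordism $W_I\colon M_{I-1}\Rightarrow M'_{I-1}$, one can choose collar neighborhoods $c_i\colon M'_{d_i}\times[0,\varepsilon)\hookrightarrow W_{d_i}$ of the outgoing boundary at each level that are mutually compatible, in the sense that $c_{i+1}$ restricts to $c_i$ on $M'_{d_i}\times[0,\varepsilon)$. This is proved by induction on the depth of the nesting: one picks a collar $c_n$ on the top level using a Riemannian metric and then restricts and extends inward, using that $M'_{d_i}\subset M'_{d_{i+1}}$ is an embedded submanifold that is a clean subset of the boundary. I would likewise choose compatible collars $c_i'$ on $W'$. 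The existence of such tubular data is the analogue of the standard collar theorem for manifolds with corners, and is the main technical obstacle.

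Given such nested collars, I would define the smooth structure on the pushout level by level: on the underlying topological space $W_{d_i}\cup_{M'_{d_i}}W'_{d_i}$, use the collars $c_i$ and $c_i'$ to declare a chart around the seam in the standard way, so that each $W_{d_i}\cup W'_{d_i}$ becomes a smooth $d_i$-manifold. Compatibility of the collars across levels guarantees that $W_{d_i}\cup W'_{d_i}$ sits as a smooth closed subset of $W_{d_{i+1}}\cup W'_{d_{i+1}}$, diffeomorphic to a $d_i$-manifold, so the tuple $(W_{d_n}\cup W'_{d_n},\dots,W_{d_1}\cup W'_{d_1})$ assembles into a nested $I$-manifold. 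The inclusions of $M$ and $\overline{N}$ become nested diffeomorphisms onto the corresponding boundary components by construction, and they preserve orientation because the collars were chosen to be compatible with the given orientations of $W$ and $W'$.

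Finally, I would verify independence of the choice of nested collars. If $\{c_i\}$ and $\{\tilde c_i\}$ are two systems of compatible collars, a nested version of the collar uniqueness theorem (again proved by induction on depth, using the Thom isotopy extension theorem at each level restricted to lie within the next) produces a nested diffeomorphism between the two resulting glued manifolds that restricts to the identity on $M$ and $\overline{N}$. Thus the equivalence class of the composed cobordism in the sense of Definition~\ref{defn:equiv reln on nested cobs} is well-defined, giving a composition on $\Cob_I$. The hardest step is the nested collar theorem and its uniqueness, since one must carefully preserve the embedding $M'_{d_i}\hookrightarrow M'_{d_{i+1}}$ at every stage of the inductive construction; once this is in hand, the rest of the proof follows the classical pattern mechanically.
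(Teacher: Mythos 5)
Your proposal takes essentially the same route as the paper: choose collar neighborhoods compatible across all levels of the nesting, then glue using the standard collar argument level by level. The paper's proof is considerably terser — it simply asserts that up to diffeomorphism the top-level cobordisms have collars and that ``this implies that the submanifolds are cylindrical in the collar as well,'' leaving the nested collar neighborhood theorem implicit, and it does not separately address independence of the choice of collars. Your write-up is more careful on precisely these two points (the inductive construction of compatible collars, and the uniqueness argument via nested isotopy extension), so it fills in details the paper glosses over rather than taking a genuinely different approach.
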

\begin{proof}
Up to diffeomorphism we can assume the top dimensional cobordisms $W_{d_n}$ and 
$W'_{d_n}$ have a collar at their boundaries, whose restrictions forms collar neighbourhoods for every nested submanifold. That is, there exists an $\varepsilon>0$ such that $(1-\varepsilon, 1]\times M'_{I-1} \subset W_{I}$ and $[0, \varepsilon) \times M'_{I-1} \subset W'_{I}$. In particular this implies that the submanifolds are cylindrical in the collar as well, so the pushout defined as 
$$W\cup_{M'} W = (W_{d_n}\cup_{M'_{d_n-1}} W'_{d_n}, \dots, W_{d_1}\cup_{M'_{d_1-1}} W'_{d_1})$$
inherits a smooth structure at every level.\end{proof}

This result implies that pushouts yield a well-defined composition in $\Cob_I$.
It follows from the definition that composition is associative and that cobordisms that are nested diffeomorphic to $W_I = M_{I-1}\times [0,1]$, with the boundary inclusions being identities, are identity morphisms in the category. 

\begin{proposition}\label{propn: diffeo gives cobord}
    Any nested diffeomorphism $\phi\colon M'\to M$ determines a nested cobordism from $M$ to $M'$ that is an isomorphism in $\Cob_I$ given by the mapping cylinder $M_\phi$ of $\phi$.
\end{proposition}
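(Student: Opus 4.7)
The plan is to build the mapping cylinder $M_\phi$ level by level and then show it defines an isomorphism in $\Cob_I$ by exhibiting the mapping cylinder of $\phi^{-1}$ as its inverse.

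First, since $\phi=(\phi_n,\dots,\phi_1)$ is a nested diffeomorphism, each component $\phi_i\colon M'_{d_i}\to M_{d_i}$ is a diffeomorphism of smooth manifolds. I form the classical mapping cylinder at each level,
\[
M_{\phi_i} := \bigl(M'_{d_i}\times[0,1]\bigr)\sqcup M_{d_i}\,\big/\,(x,1)\sim\phi_i(x),
\]
and endow it with its standard smooth structure (which exists precisely because $\phi_i$ is a diffeomorphism). I then check that $M_\phi:=(M_{\phi_n},\dots,M_{\phi_1})$ is a nested $I$-manifold: since $\phi_{i-1}$ is the restriction of $\phi_i$ to the closed submanifold $M'_{d_{i-1}}\subset M'_{d_i}$, the level-wise mapping cylinder $M_{\phi_{i-1}}$ embeds naturally as a closed subset of $M_{\phi_i}$ diffeomorphic to a smooth $(d_{i-1}+1)$-manifold, which gives the required nested structure. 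If $\phi$ is orientation-preserving, each level inherits the product orientation, making $M_\phi$ an oriented nested manifold of one dimension higher.

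Second, the mapping cylinder construction has the natural boundary $\partial M_{\phi_i}=(M'_{d_i}\times\{0\})\sqcup M_{d_i}$ at each level, and these assemble to $\partial M_\phi\cong M\sqcup M'$ as nested manifolds. Together with the obvious inclusions of $M$ and $\overline{M'}$, this exhibits $M_\phi$ as a nested cobordism $M\Rightarrow M'$ in the sense of the earlier definition.

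Third, to prove that $M_\phi$ is an isomorphism in $\Cob_I$, I exhibit $M_{\phi^{-1}}\colon M'\Rightarrow M$ as the inverse and compute the composition using \cref{thm: pushouts give composition}. Classically, the pushout of two mapping cylinders of inverse diffeomorphisms is diffeomorphic to the cylinder $M\times[0,1]$ relative to its boundary via an explicit rescaling map (sending the $M_\phi$ half to $[0,\tfrac12]$ and the $M_{\phi^{-1}}$ half to $[\tfrac12,1]$, and using $\phi^{-1}\circ\phi=\id$ to identify the middle). I would apply this construction level by level to produce a nested diffeomorphism $M_\phi\cup_{M'}M_{\phi^{-1}}\cong M\times[0,1]$ fixing the boundary pointwise, and then the symmetric argument yields $M_{\phi^{-1}}\cup_M M_\phi\cong M'\times[0,1]$. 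By \cref{defn:equiv reln on nested cobs} these are diffeomorphism equivalences to the identity cobordisms, so $[M_\phi]$ and $[M_{\phi^{-1}}]$ are mutually inverse morphisms in $\Cob_I$.

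The main obstacle is the bookkeeping for the nested smooth structure: one has to ensure that the level-wise rescaling diffeomorphisms from the composition to the cylinder are compatible with the nesting (so that they assemble into a nested diffeomorphism) and fix the boundary pointwise. Once the level-wise collaring and smoothing are pinned down as in the proof of \cref{thm: pushouts give composition}, the remaining checks are routine adaptations of the classical non-nested argument.
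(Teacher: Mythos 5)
Your proof is correct but follows a genuinely different route from the paper's. You construct the mapping cylinder $M_\phi$ literally, as a quotient $(M'_{d_i}\times[0,1])\sqcup M_{d_i}/\sim$ built level-by-level, then check the nested smooth structure, identify the boundary, and finally compute the pushout with $M_{\phi^{-1}}$ via an explicit rescaling diffeomorphism to the trivial cylinder. The paper instead exploits the fact that a morphism in $\Cob_I$ is a manifold \emph{together with} boundary inclusion data: it never builds a quotient at all, but takes the total space to be the plain product $W = M_{I-1}\times[0,1]$ and encodes the diffeomorphism entirely in the boundary inclusion, declaring $M_\phi$ to be
\[
\begin{tikzcd}
M \ar[r,"\id",hook] & W & \overline{M'}\ar[l,"\overline{\phi}"',hook'].
\end{tikzcd}
\]
Its inverse swaps which end carries $\phi$, and the composition is immediately $W\cup_{\id}W \cong M\times[0,2]$, which shrinks to the identity. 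This packaging sidesteps all the smoothing, collaring, and nested-compatibility bookkeeping that your literal quotient construction requires (and which you rightly flag as the main overhead), at the price of a mild abuse of language in calling $W$ ``the mapping cylinder.'' Your version has the virtue of matching the proposition's wording more literally and making the underlying space explicit; the paper's version is shorter because, once a diffeomorphism can live in the boundary inclusion, the cobordism itself can stay a product. Both are correct; your remaining work (verifying that the level-wise rescalings assemble into a nested diffeomorphism fixing the boundary pointwise) is routine but genuinely needs to be said in your approach, whereas the paper's approach makes that check essentially vacuous.
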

\begin{proof}
Consider the cobordism $W = M_{I-1}\times [0,1]$ with inclusion maps
    \[\begin{tikzcd}
            M \ar[r, "id", hook] & W & \overline{M'}\ar[l,"\overline{\phi}"' , hook'].
    \end{tikzcd}\]
        This has an inverse given by 
        \[\begin{tikzcd}
            M' \ar[r, "\phi", hook] & W & \overline{M}\ar[l,"\overline{id}"' , hook'].
        \end{tikzcd}\]  
        since their composition is $W\cup_{\phi^{-1}\circ \phi}W \cong W\cup_{\text{id}}W \cong M\times [0,2]$ which is diffeomorphic to the trivial product cobordism 
        \[\begin{tikzcd}
            M \ar[r, "id", hook] & M\times I & \overline{M}\ar[l,"\overline{id}"' , hook'].
        \end{tikzcd}\]  
        by a diffeomorphism that shrinks the interval.
\end{proof}

\begin{remark}\label{rmk: diffeo up to isotopy}
    By \cref{propn: diffeo gives cobord}, nested diffeomorphic manifolds are isomorphic as objects in the category. Since a category is equivalent to its skeleton, we can think of $\Cob_I$ as having objects given by diffeomorphism classes of $I-1$ manifolds. 
           \end{remark}
\begin{definition}
We call nested diffeomorphisms $\phi, \psi\colon M \rightarrow M$ \emph{nested pseudo-isotopic} if there is a nested diffeomorphism $F\colon M\times I \to M\times I$ such that $F|_{M\times \{0\}} = \phi$ and $F|_{M\times \{1\}} = \psi$. 
\end{definition}

The lemma below is the nested analogue of \cite[Theorem 1.9]{milnorhcobordism}.
\begin{lemma}\label{cor: automorphisms are diffeos up to pseudo-isotopy}
Two mapping cylinders of nested self-diffeomorphisms $\phi, \psi\colon M \rightarrow M$ are equivalent as morphisms in $Cob_I$ if and only if $\phi$ is nested pseudo-isotopic to $\psi$. 
\end{lemma}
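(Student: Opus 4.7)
The plan is to unpack the definition of diffeomorphism equivalence between the mapping cylinders $M_\phi$ and $M_\psi$ in terms of their boundary data, and then translate between such an equivalence and a nested pseudo-isotopy using a single auxiliary ``level-preserving'' nested diffeomorphism.

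First I would observe that, following the construction in \cref{propn: diffeo gives cobord}, both $M_\phi$ and $M_\psi$ have underlying nested manifold $M\times I$, with in-boundary inclusion $x\mapsto(x,0)$ in both cases, but with out-boundary inclusions $x\mapsto(\phi(x),1)$ and $x\mapsto(\psi(x),1)$ respectively. By \cref{defn:equiv reln on nested cobs}, the mapping cylinders are equivalent as morphisms in $\Cob_I$ if and only if there exists a nested self-diffeomorphism $H\colon M\times I\to M\times I$ satisfying $H|_{M\times\{0\}}=\id_M$ and $H|_{M\times\{1\}}=\psi\circ\phi^{-1}$, where these boundary restrictions are viewed as self-diffeomorphisms of $M$ via the natural identifications $M\cong M\times\{0,1\}$.

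The key translation device is the nested diffeomorphism $\Phi\colon M\times I\to M\times I$ defined by $\Phi(x,t)=(\phi(x),t)$; this is nested because $\phi$ is, and it is itself a (trivial) pseudo-isotopy from $\phi$ to $\phi$. For the reverse direction, given a nested pseudo-isotopy $F\colon M\times I\to M\times I$ with $F|_{M\times\{0\}}=\phi$ and $F|_{M\times\{1\}}=\psi$, I would set $H=F\circ\Phi^{-1}$ and compute directly that $H(x,0)=F(\phi^{-1}(x),0)=(x,0)$ and $H(x,1)=F(\phi^{-1}(x),1)=(\psi\phi^{-1}(x),1)$, so $H$ witnesses the desired equivalence. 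For the forward direction, starting from such an $H$, I would set $F=H\circ\Phi$ and check symmetrically that $F(x,0)=(\phi(x),0)$ and $F(x,1)=(\psi(x),1)$, producing a nested pseudo-isotopy from $\phi$ to $\psi$.

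The only point requiring care is verifying that all the maps in sight respect the nested structure: since $\phi$ is a nested diffeomorphism, $\Phi$ and $\Phi^{-1}$ act at each level $M_{d_i}$ by the analogous formula using $\phi_i$, and nestedness is preserved under composition, so both constructions indeed produce nested diffeomorphisms of the required form. I do not expect any substantial obstacle beyond being careful with the boundary-identification conventions of \cref{propn: diffeo gives cobord}; once these are pinned down, the two constructions are inverse to one another up to a choice of book-keeping, and the ``if and only if'' follows.
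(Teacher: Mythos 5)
Your proof is correct and takes a more direct route than the paper's. You unpack the boundary conditions that a single nested diffeomorphism $H\colon M\times I\to M\times I$ must satisfy in order to witness $[M_\phi]=[M_\psi]$ in $\Cob_I$ (namely $H|_{M\times\{0\}}=\id$ and $H|_{M\times\{1\}}=\psi\circ\phi^{-1}$), and then use the level-preserving map $\Phi(x,t)=(\phi(x),t)$ to set up an explicit bijection $H\leftrightarrow F=H\circ\Phi$ between such $H$ and nested pseudo-isotopies $F$ from $\phi$ to $\psi$. This makes both implications of the ``if and only if'' symmetric and manifest, and the only checks are the two boundary computations you carry out. The paper instead argues through composition: it forms $M_\phi\circ M_\psi^{-1}$, realizes it as a glued double cylinder $W_1\cup_{\phi^{-1}\circ\psi}W_2$, and shows the composite is equivalent to the trivial cylinder by spreading a pseudo-isotopy of $\phi^{-1}\circ\psi$ over one half and the identity over the other. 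That route leans on the pushout description of composition (\cref{thm: pushouts give composition}) and the invertibility of mapping cylinders; yours stays on a single cylinder, uses only \cref{defn:equiv reln on nested cobs}, and makes the converse implication (equivalence implies pseudo-isotopy) appear on equal footing with the forward one rather than as an afterthought.
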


\begin{proof}
The maps $\phi$ and $\psi$ are pseudo-isotopic if and only if $\phi^{-1} \circ \psi$ is pseudo-isotopic to the identity. 
Composing $M_\phi$ with $M^{-1}_\psi$, where the latter is given by
     \[\begin{tikzcd}
            M \ar[r, "\psi", hook] & M\times I & \overline{M}\ar[l,"\overline{id}"' , hook'],           \end{tikzcd}\]
gives $W_1\cup_{\phi^{-1}\circ \psi}W_2$ with the inclusions on either end the identity and $W_{1,2}=M\times I$. 
Let $F\colon M\times I\to M \times I$ be a nested pseudo-isotopy between $\phi^{-1}\circ \psi$ and $id$.
Then $\widetilde{F}\colon W_1\cup_{\phi^{-1}\circ \psi}W_2 \to M\times [0,2]$ defined as $F$ on $W_1$ and $id$ on $W_2$ is a nested diffeomorphism relative boundary between $M_\phi\circ M^{-1}_\psi$ and the identity.

Conversely, if $G\colon M\times I \to M\times I$ is a nested pseudo-isotopy between $\phi$ and $\psi$, i.e. $G|_{M\times \{0\}} = \phi$ and $G|_{M\times \{0\}} = \psi$.
Consider the morphism $W_{\phi, \psi}$ defined as
     \[\begin{tikzcd}
            M \ar[r, "\phi", hook] & M\times I & \overline{M}\ar[l,"\overline{\psi}"' , hook'],           \end{tikzcd}\]
Note that by definition the composite $id \circ W_{\phi, \psi} \circ id = M_{\phi^{-1}}\circ M_{\psi} \circ id \cong M^{-1}_{\phi}\circ M_{\psi}$.
Consider $\widetilde{G}\colon M\times [0,3] \to id \circ W_{\phi, \psi} \circ id $ defined by $G$ on the middle cylinder and the identity elsewhere. This is a nested diffeomorphism relative boundary that witnesses that $M_\phi$ is inverse to $M_\psi$.
\end{proof}           

\subsection{Nested Morse Theory}
\label{section3:nestedMorse}
Following methods of \cite{kock_2003}, we will use Morse theoretic arguments to find generators of $\Cob_I$. In this subsection, we develop helpful tools for nested cobordisms, using arguments analogous to those from non-nested Morse theory \cite{audin2014morse} as well as some results from stratified Morse theory \cite{SMT88}. See Appendix~\ref{sec: SMT} for a review of stratified Morse theory in the more general setting.  

Let $M_I = (M_{d_n}, \dots, M_{d_1})$ be a nested $I$-manifold, possibly with boundary.
Let $f_n\colon M_{d_n}\to \RR$ be a smooth function and $f_i=(f_n)|_{M_{d_i}}$.
We denote the set $f=(f_n, \dots, f_1)$ and call $f$ a \textit{nested function}.

\begin{definition}\label{def: indiv/nested Morse}
    A nested function $f\colon M_I\to \RR$ is \textit{individually Morse} if all the $f_i$ are Morse, i.e. $f_n$ is proper and each $f_i$ has non-degenerate critical points and distinct critical values. We call $f$ \textit{nested Morse} if moreover the critical points $\crit(f_i)$ of the $f_i$ are distinct, i.e. $\crit(f_i) \cap \crit(f_j) = \varnothing$ if $i\neq j$.  
    Denote $\crit(f) = \bigcup_i \crit(f_i)$.
\end{definition}

\begin{figure}[h!]
	\centering	\includegraphics[width=0.5\textwidth]{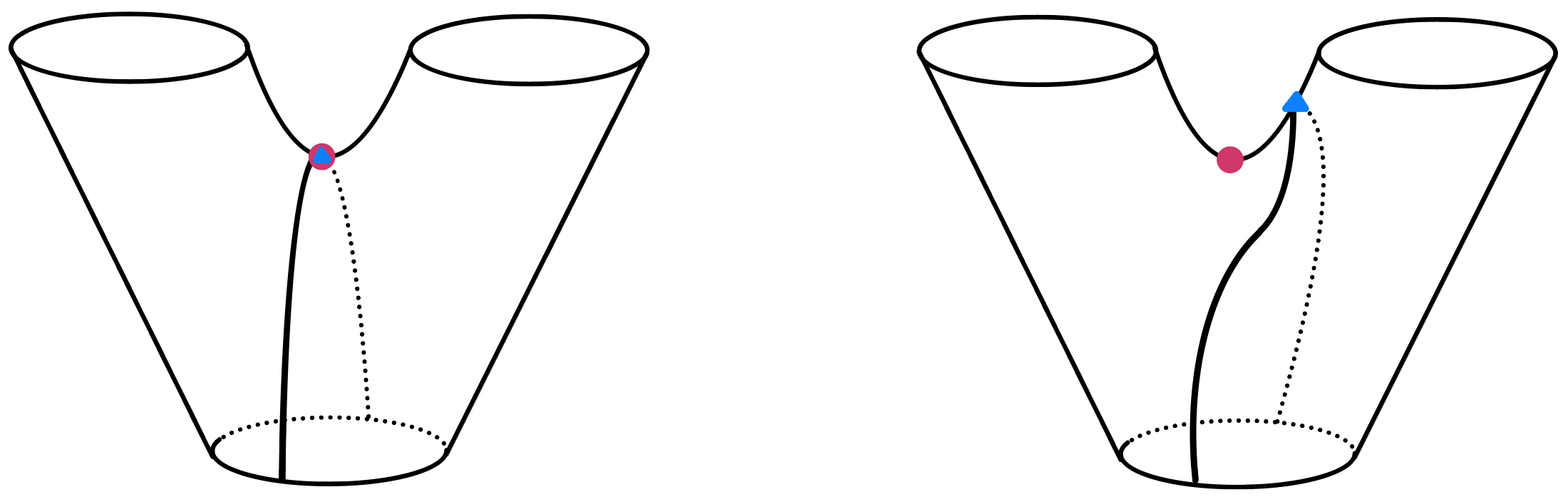}
	\caption{The figure on the left is an example of an individually Morse function which is not nested Morse. The figure on the right is nested Morse.}
	\label{morse nested morse}
\end{figure}

We claim that every nested function can be approximated by a nested Morse function (Theorem~\ref{thm: nested Morse dense in smooth}).

\begin{lemma}\label{lem:distance to p is nested morse}[See \cite{audin2014morse}, Proposition 1.2.1]
Given an embedding of $M_I$ into $\bbr^N$, we have that for almost all $p\in \bbr^N$, the function 
\[
f_p: M_I \to \bbr, \hspace{1cm} x \mapsto ||x-p||^2
\]
is a nested Morse function.  
\end{lemma}

\begin{proof}
For each $i\in I$, we write $f_p^i:= f_p|_{M_i}$. First observe that the collection of $p\in \RR^N$ for which $f_p$ fails to be nested Morse can be written as a union\[
\{p\in \RR^N\mid f_p \text{ not individually Morse}\} \cup \{p\in \RR^N\mid \crit(f^i_p) \cap \crit(f_p^j)\neq \varnothing \text{ for some } i,j\in I\}.
\] We will show that both sets in the union above have measure zero in $\RR^N$, thereby proving the claim that $f_p$ is nested Morse for almost all choices of $p$.

For the first set, by \cite[Proposition 1.2.1]{audin2014morse}, Sard's theorem ensures that for any $i$, the function $f_p^i$ is Morse except for a measure zero set of values $p$. The union of these sets for all $i=1, \dots, n$ is still a measure zero set, hence the same proof shows that almost all $f_p$ maps are individually Morse on $M_I$.

For the second set, we will now show that $A_{ij} = \{p\in \RR^N\mid \crit(f^i_p)\cap \crit(f^j_p)\neq \varnothing\}$ has measure zero for any choice of $i\neq j\in I$. Consequently, the union $\bigcup_{i,j\in I} A_{ij}$ also has measure zero (since $I$ is finite), which proves the desired result. Without loss of generality, we may assume that $i<j$. Recall (c.f. \cite[\S 1.2 and \S 1.4]{audin2014morse}) that $x\in \crit(f_p^i)$ if and only if $(x-p)\perp T_xM_i$, i.e. $x-p\in N_xM_i$, where the tangent space is taken with respect to $M_i\subseteq M_n\hookrightarrow \RR^N$. Consequently, $x\in \crit(f_p^i)\cap \crit(f_p^j)$ if and only if $x\in M_i\subseteq M_j$ and $v:=(p-x)\in N_x M_j \subseteq N_xM_i$, which is to say $(x,v)\in M_i\times_{M_j} NM_j\subseteq NM_j$. Consider the smooth map $E_j\colon NM_j\to \RR^N$ from \cite[\S 1.2.a]{audin2014morse} which sends $(x,v)\mapsto x+v$. Let $E_{ij} := E_j|_{M_i\times_{M_j} NM_j}$ and observe that $A_{ij} = {\rm im}(E_{ij})$. Now, since $\dim(M_i\times_{M_j} NM_j) = d_i + N-d_j = N - (d_j - d_i)$, the image of $E_{ij}$ has dimension strictly less than $N$ (since $d_i<d_j$) and hence $A_{ij}$ has measure zero.
\end{proof}

\begin{theorem}\label{thm: nested Morse dense in smooth}
    Every smooth function $f\colon M_I\to \bbr$ can be uniformly approximated by a nested Morse function on any compact subset.
\end{theorem}

\begin{proof}
Let $f=(f_1,\dots, f_n)\colon M_I\to \RR$ be a smooth function. Then, as in the proof of \cite[Corollary 6.8]{milnor} (or \cite[Proposition 1.2.4]{audin2014morse}), we may choose an embedding $h$ of $M_{d_n}$ into $\RR^N$ for $N$ sufficiently large so that the first coordinate of $h$ is $f_n$. By \cref{lem:distance to p is nested morse}, for almost any point $p=(-c+\varepsilon_1, \varepsilon_2, \dots, \varepsilon_n)$ near $(-c, 0, \dots, 0)$, the function $f_p$ is not only Morse, but nested Morse. Consequently, the function\[
g(x) = \frac{f_p(x) - c^2}{2c}
\] is also  nested Morse. It follows (as in the non-nested case) that for $c$ sufficiently large and $\varepsilon_i$ sufficiently small, the function $g(x)$ is a uniform approximation of $f$ on any compact subset. 
\end{proof}

\begin{remark}
The result above is similar to \cite[Theorem 2.2.1]{SMT88}, which gives this argument for the case of analytic manifolds; see Theorem~\ref{thm: SMT dense in smooth maps}, which recalls this result for stratified Morse functions on subanalytic manifolds. It is possible to apply the stratified result in the nested setting, citing the fact that a compact manifold admits an analytic structure (c.f. \cite{shiga}) and taking care that this structure is suitably compatible with the stratification coming from the nested structure. We thank the anonymous referee for highlighting the subtlety of this approach and suggesting the proof method detailed above.
\end{remark}

Stratified Morse theory also provides an explicit description of how the nested (or stratified) manifold changes as one moves past critical points.
It will be helpful to use a dictionary between stratified Morse theory and our nested Morse functions in order to describe how our nested manifolds change as one moves past critical points.

More specifically, for $M_I$ a nested manifold, let $F_i = M_{d_i}\setminus M_{d_{i-1}}$ for $1<i\leq n$ and $F_1 = M_{d_1}$. Note that $F_i$ are manifolds of dimension $d_i$; these will be the strata of a Whitney stratified space $F = (F_1, \dots, F_n)$, as shown below. Note that in this situation the stratified space $Z = \bigcup_i F_i = M_{d_n}$ is the entire background manifold.

\begin{lemma}\label{lem: nested surface is strat}
For any $M_{I}$, $F$ is a Whitney stratified space in the sense of \cite{SMT88}.
    \end{lemma}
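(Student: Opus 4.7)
The plan is to verify each of the defining axioms of a Whitney stratification for the decomposition $F = (F_1,\dots,F_n)$ of the ambient space $Z = M_{d_n}$. I would dispatch the easy conditions first: local finiteness is immediate since there are only finitely many strata, and each $F_i$ is a smooth manifold of dimension $d_i$, because $F_1 = M_{d_1}$ is smooth by hypothesis, and for $i>1$, $F_i = M_{d_i}\setminus M_{d_{i-1}}$ is open in the smooth manifold $M_{d_i}$ (using that $M_{d_{i-1}}$ is closed in $M_{d_i}$).

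Next I would check the frontier condition by first showing $\overline{F_j} = M_{d_j}$. The inclusion $\overline{F_j}\subseteq M_{d_j}$ holds because $M_{d_j}$ is closed in $M_{d_n}$; for the reverse, $M_{d_{j-1}}$ has codimension at least one in the smooth manifold $M_{d_j}$, so $F_j$ is open and dense in $M_{d_j}$. The frontier condition then follows: for $i<j$, $F_i\subseteq M_{d_i}\subseteq M_{d_j} = \overline{F_j}$; and for $i>j$, $F_i$ is disjoint from $M_{d_{i-1}}\supseteq M_{d_j} = \overline{F_j}$, so the intersection is empty and the condition is vacuous.

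The substantive step is Whitney's conditions (A) and (B) for a pair $F_i,F_j$ with $i<j$ at a point $y\in F_i$. For this I would invoke the existence of simultaneously straightening coordinates: iterated application of the tubular neighborhood theorem to the chain of smooth embeddings $M_{d_1}\hookrightarrow M_{d_2}\hookrightarrow\cdots\hookrightarrow M_{d_n}$ yields, at any $y\in M_{d_i}$, a chart $\varphi\colon U\to \RR^{d_n}$ with $\varphi(M_{d_k}\cap U) = \RR^{d_k}\times\{0\}$ for each $k\geq i$. In such coordinates each stratum near $y$ is an open subset of a linear subspace, tangent spaces along a stratum are constant, and secant lines from a point of $F_j$ to a point of $F_i$ remain in the ambient linear subspace; Whitney's conditions (A) and (B) follow at once.

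The main obstacle is producing the simultaneously straightening chart, since the tubular neighborhoods built at successive levels must be compatible with one another. I would construct them inductively up the nesting depth: first straighten $M_{d_i}$ inside a tubular neighborhood in $M_{d_{i+1}}$, then choose a tubular neighborhood of $M_{d_{i+1}}$ inside $M_{d_{i+2}}$ whose restriction preserves the previous straightening, and iterate. This is essentially the classical fact that a finite chain of smoothly embedded submanifolds gives a Whitney stratification; the real work lies in arranging the iterative compatibility of the neighborhoods.
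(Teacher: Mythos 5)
Your proof is correct but takes a different route from the paper's, and it is worth contrasting them. The paper never constructs a simultaneous normal form for the whole chain. Instead, to check the pair $(F_\alpha, F_\beta)$ with $\alpha<\beta$, it works entirely inside the single smooth manifold $M_{d_\beta}$: since $F_\beta$ is open in $M_{d_\beta}$, every tangent plane $T_{x_i}F_\beta$ equals $T_{x_i}M_{d_\beta}$, so by continuity of tangent spaces of a submanifold the limit $\tau$ must be $T_y M_{d_\beta}$. Condition (a) then reads $T_y F_\alpha \subset T_y M_{d_\beta} = \tau$, needing no chart at all; condition (b) follows by forming secant lines in a single submanifold chart for $M_{d_\beta}\subset M_{d_n}$, since both $x_i$ and $y_i$ lie in $M_{d_\beta}$, so the limiting line must lie in $T_y M_{d_\beta}=\tau$. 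This bypasses the ``main obstacle'' you flag: no compatibility between tubular neighborhoods at different levels is ever required, because only one submanifold needs to be straightened, and only for half the argument.

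Your plan does work, and it is conceptually clean once the simultaneously-straightening chart is in hand, but you are doing strictly more than necessary and importing a harder lemma. Two remarks. First, even within your approach you do not need to straighten the whole chain $M_{d_i}\hookrightarrow\cdots\hookrightarrow M_{d_n}$: to verify the pair $(F_i,F_j)$ it suffices to straighten $M_{d_j}$ alone inside $M_{d_n}$ (one application of the tubular neighborhood theorem, no compatibility issues), since $F_j$ is open in $M_{d_j}$ and $F_i\subset M_{d_i}\subset M_{d_j}$, and the computation you sketch (constant tangent planes, secants stay in the linear subspace) goes through unchanged. So the obstacle you identify can simply be removed. Second, you carefully verify local finiteness and the frontier condition, which the paper leaves implicit; that is a welcome addition and your verification of $\overline{F_j}=M_{d_j}$ (via openness and density of $F_j$ in $M_{d_j}$) is correct.
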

\begin{proof}
    Note first that $F_i$ is a locally closed smooth submanifold of $M=M_{d_n}$ of dimension $d_i$. We will show that every pair $F_\alpha$ and $F_\beta$ for $\alpha < \beta$ satisfies Whitney conditions A and B. Suppose $x_i \in F_\beta$ converges to $y \in F_\alpha$, and $y_i \in F_\alpha$ also converges to $y$. In a local coordinate system on $M$, the secant lines $l_i = \overline{x_i y_i}$ converge to a line $l\subset T_y M$ and the tangent planes $T_{x_i} F_{\beta}$ converge to a plane $\tau \subset T_y M$. We need to show that 
    \begin{enumerate}
        \item[(a)] $T_y F_\alpha \subset \tau$ 
        \item[(b)] $l\subset \tau$.
    \end{enumerate}
Consider $M_{d_\beta} = \bigcup_{i<\beta} F_i$. Then $\tau = T_y M_{d_\beta} \supset T_y F_\alpha$, and we can form the secant lines $l'_i = \overline{x_i y_i}$ in a local coordinate system of $M_{d_\beta}$ instead, where they will have the same limit $l \in T_y M_{d_\beta} =\tau$ as $M_{d_\beta}$ is a submanifold of $M_{d_n}$.
\end{proof}

\begin{lemma}\label{lem: nested Morse iff strat Morse}
    Let $M_I$ be a nested manifold and let $f_n\colon M_{d_n}\to \bbr$ be a smooth function. The nested function $f=(f_n, \dots, f_1)\colon M_I\to \bbr$ is nested Morse if and only if $f_n$ is a stratified Morse function (see Definition~\ref{def: strat Morse func}) on $F=(F_1, \dots, F_n)$.
\end{lemma}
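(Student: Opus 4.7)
The plan is to work out both directions by systematically matching the axioms of nested Morse against those of stratified Morse, using two structural observations about critical points and tangent planes in this setup. First, any critical point $p\in\crit(f_i)$ of a nested Morse function must lie in the top stratum $F_i = M_{d_i}\setminus M_{d_{i-1}}$: if $p\in M_{d_{i-1}}$, then since $f_{i-1}$ is the restriction of $f_i$ to $M_{d_{i-1}}$, we would get $p\in\crit(f_{i-1})$, contradicting the nested Morse condition $\crit(f_i)\cap\crit(f_{i-1})=\varnothing$. Because $F_i$ is open in $M_{d_i}$, this gives a clean identification $\crit(f_n|_{F_i}) = \crit(f_i)$, with non-degeneracy preserved. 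Second, the nontrivial generalized tangent planes at a point $p\in F_i$ are exactly $T_pM_{d_\alpha}$ for $\alpha>i$: each $F_\alpha$ is open in the smooth manifold $M_{d_\alpha}$, so for $x_k\in F_\alpha$ with $x_k\to p$ the tangent planes $T_{x_k}F_\alpha = T_{x_k}M_{d_\alpha}$ converge to $T_pM_{d_\alpha}$. Consequently the stratified Morse differential condition $df_n(p)|_Q\neq 0$ translates into $p\notin\crit(f_\alpha)$ for every $\alpha>i$, which is precisely the disjointness built into nested Morse.

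With these two dictionaries in hand, the forward direction becomes essentially immediate: properness of $f_n$, non-degenerate critical points with distinct values on each stratum, and the generalized tangent condition all follow directly from the corresponding nested Morse axioms via the two observations above. For the reverse direction, suppose $f_n$ is stratified Morse. Apply the generalized tangent condition at any point $p\in F_j$ with $j<i$: the plane $T_pM_{d_i}$ arises as a generalized tangent plane from the stratum $F_i$, so $df_n(p)|_{T_pM_{d_i}} = df_i(p) \neq 0$, which prevents $p$ from being a critical point of $f_i$. Hence every $\crit(f_i)$ is contained in $F_i$, where non-degeneracy and distinct critical values are guaranteed by stratified Morse; combined with properness of $f_i$ (inherited from $f_n$ and closedness of $M_{d_i}$ in $M_{d_n}$), this shows each $f_i$ is Morse. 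The same reasoning, applied contrapositively, yields $\crit(f_i)\cap\crit(f_\alpha)=\varnothing$ for $i\neq\alpha$, completing the nested Morse conditions.

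The main technical point, and the step requiring the most care, is the identification of generalized tangent planes at points of lower strata with the ambient tangent spaces $T_pM_{d_\alpha}$. This relies on combining the Whitney regularity established in \cref{lem: nested surface is strat} with the fact that each $F_\alpha$ is an open smooth submanifold of $M_{d_\alpha}$; once this characterization is justified, the rest of the argument is a direct bookkeeping reconciliation of the two definitions.
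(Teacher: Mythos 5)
Your proposal takes essentially the same route as the paper: identify the generalized tangent planes at a point $p\in F_i$ with the ambient tangent spaces $T_pM_{d_\alpha}$ for $\alpha>i$ (you spell out the openness-of-$F_\alpha$ justification, which the paper merely asserts), and then translate between the two sets of Morse axioms. Your two ``dictionary'' observations, the properness argument from closedness of $M_{d_i}$ in $M_{d_n}$, and the disjointness conclusion all match the paper's logic.

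One step in your reverse direction is stated imprecisely, though. You write ``apply the generalized tangent condition at any point $p\in F_j$'' to conclude $df_i(p)\neq 0$. But condition (3) in Definition~\ref{def: strat Morse func} only constrains the differential at \emph{critical points of the stratum} $F_j$, so it cannot be invoked at an arbitrary $p\in F_j$. The conclusion $\crit(f_i)\subset F_i$ is still correct, but it needs the intermediate observation you omit: if $p\in F_j$ with $j<i$ satisfies $df_i(p)=0$, then since $T_pM_{d_j}\subset T_pM_{d_i}$ (as $M_{d_j}$ is a submanifold of $M_{d_i}$) we automatically get $df_n(p)|_{T_pM_{d_j}}=0$, so $p$ \emph{is} a critical point of $f_n|_{F_j}$; only then does the generalized tangent condition apply, giving $df_n(p)|_{T_pM_{d_i}}\neq 0$ and the desired contradiction. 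The published proof also leans implicitly on this tangent-space inclusion without stating it, so the gap is shared, but the argument should be made explicit as a short contradiction rather than a one-line appeal to the condition at every point of a lower stratum.
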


\begin{proof}
Assume $f$ is a nested Morse function. 
By definition, $f_n$ is proper and has distinct critical values, and all the critical points of $f_i:= f_n|_{M_{d_i}}$ are non-degenerate and distinct, therefore so are the critical points of the further restrictions $f_i|_{M_{d_i}\setminus M_{d_{i-1}}} = f_i|_{F_i}$.
It remains to show that for any critical point, the only (generalized) tangent space that is annihilated at that point is that of the stratum containing the point.
Let $p$ be a critical point on the stratum $F_i = M_{d_i}\setminus M_{d_{i-1}}$, so the (generalized) tangent space at $p$ of $F_i$, which is $T_p M_{d_i}$, is annihilated by $df_i$.
For $j>i$, we have that the generalized tangent space to $F_j$ at $p$ is simply $T_p M_{d_j}$. The nested Morse condition states that if $p$ is a critical point of $M_{d_i}$, it is not also a critical point of $M_{d_j}$ for $j \neq i$. Hence $T_p M_{d_j}$ is not annihilated by $df_n$. So a nested Morse function $f$ gives a stratified Morse function $f_n$ on $F$.

Conversely, assume that $f_n$ is a stratified Morse function. 
By definition, $f_n$ is proper with distinct critical values, and for all strata $F_i$, the critical points are non-degenerate. 
Suppose $p$ is a critical point of $F_i$. As above, the generalized tangent space to $F_j$ at $p$ is $T_p M_{d_j}$ for $j\geq i$. By the generalized tangent space condition, $T_p M_i$ is the only generalized tangent space that is in the kernel of $df_n$. Hence $p$ is not a critical point of $M_{d_j}$ for $j\neq i$. It follows that
    \begin{itemize}
        \item[(A)] The critical points of $f_i\colon M_{d_i} \rightarrow \bbr$ lie in the interior of $M_{d_i}\setminus M_{d_{i-1}}$, i.e. they equal the critical points of $f_n$ restricted to $F_i = M_{d_i}\setminus M_{d_{i-1}}$, which are non-degenerate. Hence $f_i$ is Morse on all of $M_{d_i}$. Therefore $f = (f_n, \dots,  f_1)$ is individually Morse.
        \item[(B)] The critical points of $f_i$ are disjoint, and hence $f$ is nested Morse.\qedhere
    \end{itemize}
\end{proof}

These lemmas allow us to carry over definitions from stratified Morse theory into our nested manifold setting, particularly in regards to the way a nested manifolds changes as one moves past critical points.

\begin{definition}[\cite{SMT88}, Definition I.3.3]\label{defn: Morse data}
    Fix $\epsilon>0$ so that the interval $[v-\epsilon, v+\epsilon]$ contains no critical values of $f\colon M_{I}\to\bbr$ other than $v=f(p)$.
     A pair $(A,B)$ of stratified spaces is \emph{Morse data} for $f$ at $p$ if there is an embedding $h\colon B\to (M_{I})_{\leq v-\epsilon}$ such that $(M_{I})_{\leq v+\epsilon}$ is homeomorphic to $(M_{I})_{\leq v-\epsilon}\cup_B A$ (obtained by attaching $A$ along $B$ using the attaching map $h$). The homeomorphism preserves the stratification.
\end{definition}

\begin{figure}[ht]
\adjustbox{scale=0.65}{

\tikzset{every picture/.style={line width=0.75pt}} 
\begin{tikzpicture}[x=0.75pt,y=0.75pt,yscale=-1,xscale=1]

\draw (345.5,443.74) node  {\includegraphics[width=417.75pt,height=190.1pt]{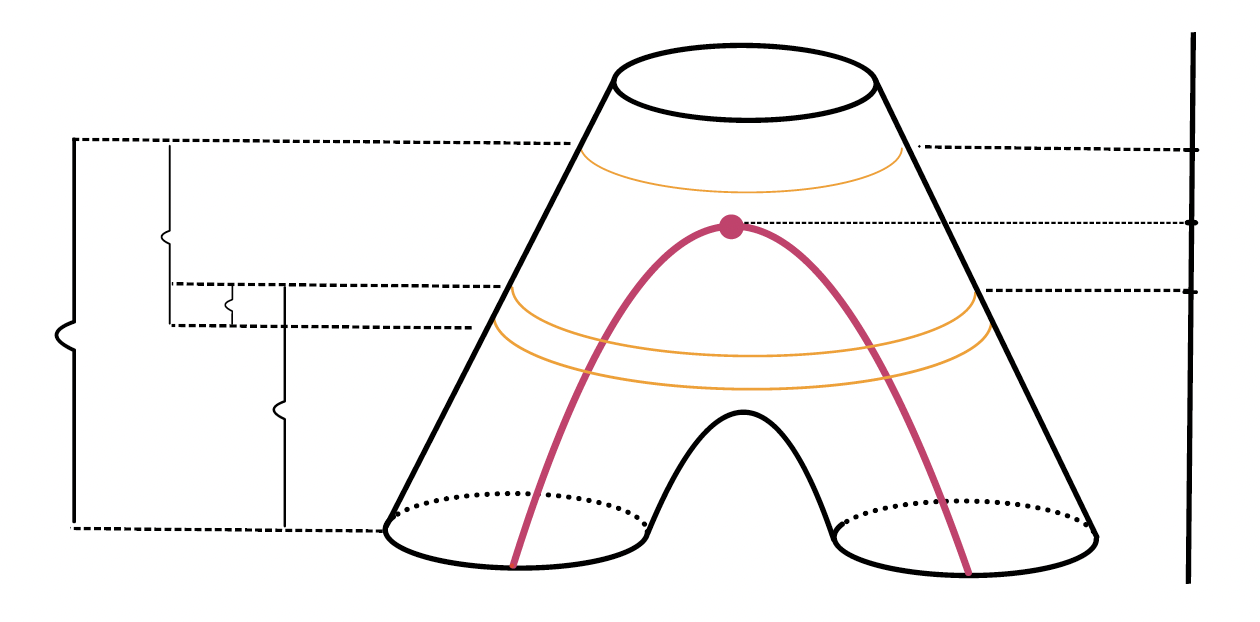}};

\draw (131.7,413.95) node  [font=\normalsize]  {$A$};
\draw (160.7,441.95) node  [font=\normalsize]  {$B$};
\draw (55.7,451.95) node  [font=\normalsize]  {$( M_{I})_{\leq }{}_{v}{}_{+}{}_{\epsilon }{}$};
\draw (155.7,484.95) node  [font=\normalsize]  {$( M_{I})_{\leq }{}_{v}{}_{- \epsilon }{}$};
\draw (611.7,407.95) node  [font=\normalsize]  {$v$};
\draw (621.7,377.95) node  [font=\normalsize]  {$v+\epsilon $};
\draw (621.7,436.95) node  [font=\normalsize]  {$v-\epsilon $};
\draw (394,419.95) node  [font=\footnotesize]  {$p$};
\end{tikzpicture}
}
\caption{Example of Morse data for the point $p$.}
\label{fig:twisted_double}
\end{figure}

\begin{remark} If the local Morse data from Definition \ref{defn: Morse data} is defined to be $A =(M_{I})_{[v-\epsilon, v+\epsilon]}$ and $B = (M_{I})_{v-\epsilon}$,  \cite{Massey} refers to this as \textit{coarse Morse data.} 
\end{remark}

\begin{lemma}[\cite{SMT88}, I.3.2]\label{lem: Morse no crit values}
    Let $f:M_I\to \bbr$ be a nested Morse function. If an interval $[a,b]$ contains no critical values of $f$, then $(M_I)_{\leq a}$ is nested homeomorphic to $(M_I)_{\leq b}$.
\end{lemma}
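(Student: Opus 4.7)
The plan is to reduce this to the analogous result in stratified Morse theory, namely the version of Lemma I.3.2 of \cite{SMT88} recalled in the appendix, by using the translation already established in Lemma~\ref{lem: nested Morse iff strat Morse}.

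First, I would invoke Lemma~\ref{lem: nested surface is strat} to view $M_{d_n}$ as a Whitney stratified space with strata $F_i = M_{d_i}\setminus M_{d_{i-1}}$, and Lemma~\ref{lem: nested Morse iff strat Morse} to see that $f_n\colon M_{d_n}\to\bbr$ is a stratified Morse function. I would then check that the stratified critical values of $f_n$ agree with the nested critical values $\crit(f) = \bigcup_i \crit(f_i)$: a point $p\in F_i$ is a stratified critical point exactly when $df_n|_{T_p F_i}=0$, which (as in the proof of Lemma~\ref{lem: nested Morse iff strat Morse}) is equivalent to $p\in\crit(f_i)$. Hence the hypothesis that $[a,b]$ contains no critical values of $f$ translates directly to the condition that $[a,b]$ contains no stratified critical values of $f_n$.

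Next, I would apply the stratified version of the lemma to obtain a stratum-preserving homeomorphism $\Phi\colon (M_{d_n})_{\leq a}\to (M_{d_n})_{\leq b}$. Because $\Phi$ preserves each stratum $F_i$ setwise, it also preserves the unions $\bigcup_{j\leq i}F_j = M_{d_i}$, so restricting yields homeomorphisms $(M_{d_i})_{\leq a}\to (M_{d_i})_{\leq b}$ for every $i$ that are compatible with the inclusions. This is exactly the data of a nested homeomorphism $(M_I)_{\leq a}\to (M_I)_{\leq b}$.

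The substantive content is hidden inside the black box: the construction of $\Phi$ in \cite{SMT88} requires producing a controlled vector field on the stratified space that is tangent to every stratum and satisfies a gradient-like condition with respect to $f_n$, then flowing for time $b-a$. One can equivalently give a direct nested proof by building such a vector field inductively: start with a gradient-like vector field $X_1$ for $f_1$ on $M_{d_1}$ with $X_1(f_1)\equiv 1$ on $f_1^{-1}[a,b]$ (available since $f_1$ has no critical values in $[a,b]$), then at stage $i$ extend $X_{i-1}$ to a vector field $X_i$ on $M_{d_i}$, using a tubular neighborhood of $M_{d_{i-1}}\subset M_{d_i}$ and a partition of unity, so that $X_i$ is tangent to $M_{d_{i-1}}$, restricts to $X_{i-1}$ there, and satisfies $X_i(f_i)\equiv 1$ on $f_i^{-1}[a,b]$. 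The time-$(b-a)$ flow of $X_n$ then gives the required nested homeomorphism. The main obstacle in this direct approach is the tangency/extension step, which is exactly what the Whitney conditions guarantee and what makes appealing to the stratified Morse result the cleaner route.
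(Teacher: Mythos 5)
Your proof is correct and takes essentially the same route the paper intends: the paper gives no proof body and simply cites \cite{SMT88}, I.3.2, relying implicitly on the nested-to-stratified translation already set up in Lemmas~\ref{lem: nested surface is strat} and~\ref{lem: nested Morse iff strat Morse}, which is exactly what you make explicit (including the key observation that the resulting stratum-preserving homeomorphism preserves the partial unions $M_{d_i}=\bigcup_{j\le i}F_j$ and hence descends to a nested homeomorphism).
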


 We have defined a nested Morse function as individually Morse with critical points and values being distinct. Since the critical values of a nested Morse function are isolated, it suffices to look at Morse data for a small neighborhood around a critical point $p$; we describe this construction of \emph{local Morse data}.
Provide $M_{d_n}$ with a smooth Riemannian metric. \cite{SMT88} shows that for any critical point $p$ on $F_i$ with critical value $v$ a stratified (and thus a nested) Morse function $f$ has a small neighborhood $B_\delta (p)$ of radius $\delta>0$ such that $\partial B_\delta(p)$ intersects $F_{j>i}$ transversely and such that none of the other critical points of $f$ in $B_\delta(p)$ have critical value $v$.

\begin{definition} 
    Choose $\delta>0$ as above.
    The \emph{local Morse data} of $f$ at $p$ is the pair $$(B_\delta(p) \cap f^{-1}([v-\epsilon,v + \epsilon]),B_\delta(p)\cap f^{-1}(v - \epsilon))$$
\end{definition}
Local Morse data describes how the topology of the level set $$B_\delta(p)\cap f^{-1}(x \in [v - \epsilon, v+ \epsilon])$$ changes as you pass the critical point $p$, in a small neighborhood of $p$.
Theorem 3.5.4 of \cite{SMT88} states that for critical points with isolated critical values, local Morse data is Morse data. 

Further, the local Morse data for a critical point $p$ splits into a tangential and normal component. More specifically, there exists a $\delta>0$ sufficiently small such that $\partial B_\delta(p)$, the boundary of a small neighborhood around $p$, is transverse to each stratum $F_j$. Let $F_i$ be the stratum containing $p$. Let $N'$ be a smooth submanifold of $M_{d_n}$ which is transverse to each stratum of $F$, intersects $F_i$ in the single point $p$, and satisfies $\dim(F_i)+\dim(N')=\dim(M_{d_n})$.
The normal slice $N$ through $F_i$ at $p$ is the set
\[ N:= N'\cap B_\delta(p).\]

\begin{definition}
    Let $p$ be a critical point of $f$ contained in the stratum $F_i$.
    The \emph{tangential Morse data} for $f$ at $p$ is the local Morse data for 
    $f|_{F_i}$ at $p$, and the \emph{normal Morse data} for $f$ at $p$ is the local Morse data for $f|_{N}$ at $p$. 
\end{definition}

\begin{figure}[!h]
    \centering
    \begin{tikzpicture}
        \draw (0,0) -- (2,0);
        \draw (2,0) -- (2,2);
        \draw (2,2) -- (0,2);
        \draw (0,2) -- (0,0);
        \draw[color=blue] (1,0) to node[midway, above right] {\color{blue}$N$} (1,2);
        \draw[thick, color=purple, distance=1.5cm] (2,2) to[out=270,in=270] node[midway,below right] {\color{purple}$M_1$} (0,2); 
        \node at (3,1) {$=$};
        
        \node at (3.75, 1) {\color{blue}\Large$($};
        \draw[color=blue] (4, 0.75) -- (4, 1.25);
        \node at (4.25, 0.75) {\color{blue}$,$};
        \node at (4.5, 0.75) {\color{blue}$\cdot$};
        \node at (4.75, 1) {\color{blue}\Large$)$};
        
        \node at (5.5, 1) {$\times$};

        \node at (6.25, 1) {\color{purple}\Large$($};
        \draw[color=purple, distance=0.75cm] (6.5, 1.25) to[out=270,in=270] (7, 1.25);
        \node at (7.2, 0.75) {\color{purple}$,$};
        \node at (7.5, 0.875) {\color{purple}$\emptyset$};
        \node at (7.75, 1) {\color{purple}\Large$)$};
        
        \node at (4.25, 0.25) {\color{blue}$\substack{\text{normal}\\ \text{Morse data}}$};
        \node at (7.2, 0.25) {\color{purple}$\substack{\text{tangential}\\ \text{Morse data}}$};
        \node at (1, -0.25) {$\substack{ \\ \text{local picture}}$};
    \end{tikzpicture}
    \caption{Example of normal and tangential Morse data for a local picture with a critical point of index $1$ in the submanifold $M_1$ and no critical points on $M_2$; $N$ is the normal slice.}
    \label{fig:morsedata}
\end{figure}
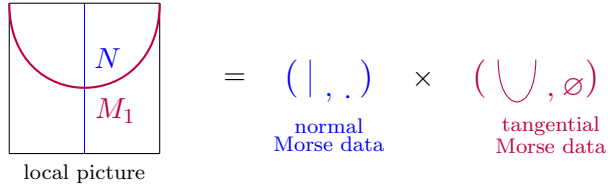

The Main Theorem of Stratified Morse Theory describes local Morse data in terms of tangential and normal Morse data. 

\begin{theorem}[The Main Theorem of Stratified Morse Theory]\label{thm:normaltangentialmorsedata}
    The local Morse data of $f$ at $p$ is homeomorphic to the product of the normal and the tangential Morse data of $f$ at $p$.
\end{theorem}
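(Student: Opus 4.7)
The plan is to establish a local product decomposition, in a neighborhood of $p$, of both the stratified space $F$ and the function $f$, and then to read off the Morse data from this decomposition. Concretely, using the Whitney stratification verified in \cref{lem: nested surface is strat} and the fact (built into the choice of $\delta$) that $\partial B_\delta(p)$ is transverse to every stratum, the Thom--Mather isotopy lemmas recorded in \cite{SMT88} yield a stratum-preserving homeomorphism
\[
\Phi\colon B_\delta(p)\xrightarrow{\;\cong\;} (B_\delta(p)\cap F_i)\times N
\]
sending $p$ to $(p,p)$, where $N$ is the normal slice through $F_i$ at $p$. This first step is purely topological and does not yet use $f$; it is available precisely because the nested stratification has been shown to be Whitney.

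Next, I would modify $\Phi$ so that $f$ respects the product. Tangentially, the ordinary Morse lemma applied to $f|_{F_i}$ provides standard quadratic coordinates near $p$ on $F_i$. Normally, the generalized tangent space condition used in the proof of \cref{lem: nested Morse iff strat Morse} guarantees that for $j>i$ the differential $df_n$ does not annihilate $T_p F_j$, so $f|_N$ has an isolated stratified singularity at $p$ and no other critical values of $f$ occur inside $B_\delta(p)\cap f^{-1}([v-\epsilon,v+\epsilon])$. Integrating a controlled, stratum-preserving, gradient-like vector field whose flow carries the level sets of $f$ to the level sets of $(x,y)\mapsto f|_{F_i}(x)+f|_N(y)$, one can isotope $\Phi$ to a stratum-preserving homeomorphism under which
\[
f\circ\Phi^{-1}(x,y)=f|_{F_i}(x)+f|_N(y)-f(p).
\]
Under this identification, both $B_\delta(p)\cap f^{-1}(v-\epsilon)$ and $B_\delta(p)\cap f^{-1}([v-\epsilon,v+\epsilon])$ factor through the product structure, and a direct check (using that $(A,B)\times(A',B')=(A\times A',\,A\times B'\cup B\times A')$ for pairs) yields
\[
(A_{\mathrm{loc}},B_{\mathrm{loc}})\;\cong\;(A_{\tau},B_{\tau})\times(A_{\nu},B_{\nu}),
\]
which is the desired identification of local Morse data with the product of tangential and normal Morse data.

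The main obstacle will be the isotopy in the second step. Tangentially the classical Morse lemma suffices, but the normal slice $N$ is itself a genuine stratified space, so straightening $f$ on it requires the machinery of controlled vector fields on Whitney stratifications; this is the technical heart of the ``moving the wall'' argument of \cite{SMT88}. Both the topological trivialization in step one and the function-level decomposition in step two rely crucially on the Whitney conditions verified in \cref{lem: nested surface is strat}, which is precisely why the nested setting inherits this structure theorem from stratified Morse theory once it has been shown to be Whitney stratified.
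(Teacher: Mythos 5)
The paper does not prove this statement at all: it records it as \cref{thm: SMT main thm} in the appendix as a direct citation of Goresky--MacPherson \cite{SMT88} (Theorem I.3.7), and the only genuine work the paper does toward applying it is \cref{lem: nested surface is strat}, which verifies that the nested stratification is Whitney so that the cited theorem's hypotheses hold. Your proposal instead sketches the internal argument from \cite{SMT88}, which is a different (and much more ambitious) route than the paper takes.

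As a sketch of the actual \cite{SMT88} argument, your outline has the right skeleton --- Whitney conditions give a local stratum-preserving product trivialization near $p$, and controlled vector fields do the ``moving the wall'' work --- but the second step as you describe it is not quite how the proof goes. You claim one can isotope $\Phi$ so that $f\circ\Phi^{-1}(x,y)=f|_{F_i}(x)+f|_N(y)-f(p)$ exactly; this literal additive normal form does not hold in general, and Goresky--MacPherson do not attempt to produce it. What they actually prove is that the \emph{Morse data}, not the function, factors as a product, and this is established through a chain of intermediate results (independence of normal Morse data from the choice of normal slice, the halflink description, a sequence of moving-the-wall deformations spanning several chapters) rather than a single straightening of $f$. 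So your sketch captures the intuition but elides the genuine technical content. Since the paper is deliberately treating \cref{thm: SMT main thm} as a black box, the appropriate move here is the paper's: cite \cite{SMT88} and verify the Whitney hypotheses via \cref{lem: nested surface is strat}, rather than reprove the theorem.
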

See Theorem~\ref{thm: SMT main thm}.

\begin{remark}
    In the situation of a nested manifold, the normal and tangential Morse data for a critical point on stratum $F_i$ take a particularly nice form. For Whitney stratified spaces, the topological type of the boundary of the normal slice measures the singularity type of the space along a stratum.  In the case of nested manifolds, this link is a sphere, because the normal Morse data is always a collection of discs of every dimension $d_j - d_i$, for $j > i$.  
\end{remark}

\begin{lemma}\label{lem:unpack Morse data}
  
     For $p$ a critical point on the top dimensional stratum $F_n$, the normal Morse data is a pair consisting of a point and the empty set, $(\bullet, \emptyset)$, and the tangential Morse data is the (unstratified) Morse data of $M_{d_n}$ (and has an empty intersection with $M_{d_i}$ for all $i< n$).

    For $p$ a critical point on $F_i$, for $i<n$, the normal Morse data is the relative pair $(A, B)$ where 
    \begin{itemize}
    \item $A$ is a nested disk $D_{I-d_i} := (D^{d_n-d_i}, D^{d_{n-1}-d_i}, \dots, D^{d_{i+1} -d_i})$ in the (nested) normal bundle $\nu$ of $M_{d_i}$ in $M_{d_{i+1}} \subset M_{d_{i+2}}\subset \dots \subset M_{d_n}$ at $p$, and
    \item $B$ is the lower point of the disk in the Morse function (the point $x \in D_{I-d_i}$ such that $f(x) = v - \epsilon$). 
    \end{itemize}
    The tangential Morse data is the local Morse data of $f_i$ as a Morse function on $M_{d_i}$.  
\end{lemma}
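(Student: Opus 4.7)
The plan is to split into the two cases and apply the Main Theorem of Stratified Morse Theory (Theorem~\ref{thm:normaltangentialmorsedata}) in each, decomposing the local Morse data into its normal and tangential factors and identifying each factor directly from the nested structure of $M_I$.

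For Case 1, where $p\in F_n$: the stratum $F_n = M_{d_n}\setminus M_{d_{n-1}}$ is open in $M_{d_n}$ and contains $p$, so for $\delta$ small enough the ball $B_\delta(p)$ lies entirely in $F_n$ and meets no $M_{d_j}$ with $j<n$. The normal slice then has dimension $d_n - d_n = 0$, so $N=\{p\}$, immediately giving normal Morse data $(\{p\},\emptyset)$. Since $f|_{F_n}$ coincides with $f_n$ on a neighborhood of $p$, its local Morse data is precisely the usual (unstratified) Morse data of $M_{d_n}$ at $p$, as claimed.

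For Case 2, where $p\in F_i$ with $i<n$: I would first unpack the normal slice $N = N'\cap B_\delta(p)$ as a nested disk. By construction $N'$ is transverse to every stratum, and each $M_{d_j}$ with $j\geq i$ is a union of strata, so $N'\cap M_{d_j}$ is a smooth $(d_j-d_i)$-dimensional submanifold of $N'$ through $p$. Using the exponential map at $p$ to produce adapted coordinates, the tuple $(N\cap M_{d_j})_{j>i}$ is nested diffeomorphic to the nested disk $D_{I-d_i}$ sitting in the nested normal bundle $\nu$ of $M_{d_i}$ in $M_{d_{i+1}}\subset\cdots\subset M_{d_n}$ at $p$. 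Next I would verify that $p$ is a regular point of $f|_{N\cap M_{d_j}}$ for every $j>i$: by Lemma~\ref{lem: nested Morse iff strat Morse} the differential $df_j(p)$ vanishes on $T_p M_{d_i}$ but is not identically zero on $T_p M_{d_j} = T_p M_{d_i}\oplus T_p(N\cap M_{d_j})$, so it must restrict to a nonzero linear form on the normal summand.

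With $p$ non-critical on every positive level of $N$, a nested gradient-like vector field on $N$, assembled level-by-level using collar neighborhoods, retracts $N\cap f^{-1}[v-\epsilon, v+\epsilon]$ onto the entire nested disk $D_{I-d_i}$ while collapsing $N\cap f^{-1}(v-\epsilon)$ onto the single lower boundary point $x$ with $f(x)=v-\epsilon$. This yields the normal Morse data $(D_{I-d_i},\{x\})$. The tangential Morse data is, by definition, the local Morse data of $f|_{F_i}$ at $p$; since $F_i$ is open in $M_{d_i}$ near $p$, this equals the local Morse data of $f_i$ on $M_{d_i}$. The main obstacle is the last retraction: one must build a nested gradient-like flow on $N$ that simultaneously respects the filtration $(N\cap M_{d_j})_{j>i}$ and collapses each level set onto the prescribed lower point, which requires careful patching of gradient-like fields across successive strata using the collars provided by transversality.
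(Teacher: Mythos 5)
Your Case 1 argument coincides with the paper's. For Case 2, the paper's proof is considerably terser than yours: it identifies the fiber of the nested normal bundle, intersects with $B_\delta(p)$ to describe the normal slice as the nested disk $D_{I-d_i}$, and stops there, leaving the passage from the normal slice to the normal Morse data pair implicit. Your proposal goes further in two useful directions. First, you make explicit the verification that $p$ is a regular point of $f|_{N\cap M_{d_j}}$ for $j>i$ by observing that $df_j(p)$ vanishes on $T_pM_{d_i}$ but not on $T_pM_{d_j}$, hence is nonvanishing on the normal summand; this is a clean consequence of \cref{lem: nested Morse iff strat Morse} and is genuinely worth recording. Second, you attempt to actually derive the Morse-data pair, which the paper's proof does not.

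However, the retraction argument in the final step doesn't establish what the lemma asserts, and I'd flag this as a real gap. The normal Morse data is, by \cref{def: local Md} and \cref{def: tang and norm Md}, the \emph{pair}
\[
\bigl(N\cap f^{-1}[v-\epsilon,v+\epsilon],\; N\cap f^{-1}(v-\epsilon)\bigr),
\]
and Morse data (\cref{defn: Morse data}) is a relation of pairs up to homeomorphism compatible with the attaching map, not up to deformation retraction. Since $f|_N$ is a submersion near $p$ (this is precisely what your regularity check shows), the level set $N\cap f^{-1}(v-\epsilon)$ is a nested disk of dimension one less than $N$, not a single point, whenever $d_n-d_i>1$. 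A gradient-like flow can collapse that level set to a point as a \emph{homotopy equivalence} of pairs, but attaching a disk along a point versus along a sub-disk produces non-homeomorphic spaces (a wedge has a cut point; a ball does not), so the retraction does not promote to the equivalence of Morse data that the lemma claims. In other words, the assertion that $B$ is a single point is only literally correct in codimension one, and your flow argument inherits this imprecision rather than resolving it. To make the argument airtight you would either restrict the lemma to the codimension-one situation actually used in $\Cob_{1<2}$ (where $N$ is an interval and $N\cap f^{-1}(v-\epsilon)$ really is a point, as in \cref{fig:morsedata}), or state the normal Morse data as $\bigl(D_{I-d_i},\,D_{I-d_i}\cap f^{-1}(v-\epsilon)\bigr)$ and observe that the second member is a contractible nested disk.
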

\begin{proof}
For a critical point in the top dimensional stratum $p\in F_n=M_{d_n}\setminus M_{d_{n-1}}$, note that the normal slice a point; thus the normal Morse data is is this point $p$ relative to the empty set. By definition, the tangential Morse data is the local Morse data for $f|_{M_{d_n}\setminus M_{d_{n-1}}}$ at $p$. Note that the neighborhood $B_\delta(p)$ can be chosen small enough such that $B_\delta(p)\cap M_{d_{n-1}}=\emptyset$; thus the tangential Morse data is the Morse data of $f$ at $p$ (considered as a point in $M_{d_n}$, without regard to the stratification). 

For $p \in F_i=M_{d_i} \setminus M_{d_{i-1}}$, $i<n$, first note that the fiber of the normal bundle at $p$ is a nested space consisting of all the spaces that arise from considering the (non-nested) normal bundles of $M_{d_i}$ in the bigger strata $M_{d_j}$, where $d_i < d_j \leq d_n$, that is, $\nu_p := (\nu(M_{d_i} \subset M_{d_{n}})_p, \nu(M_{d_i} \subset M_{d_{n-1}})_p \dots \nu(M_{d_i} \subset M_{d_{i+1}})_p )$.  The normal slice is the intersection of this nested space $\nu_p$ with a small neighborhood $B_{\delta}(p)$,
$$\nu_p \cap B_{\delta}(p) = (D^{d_n-d_i}, D^{d_{n-1}-d_i}, \dots, D^{d_{i+1} -d_i}).$$

Like in the case of the top stratum, $B_{\delta}(p)$ can be chosen small enough so that the intersection with lower dimensional strata is empty.  
\end{proof}

\begin{theorem}\label{thm:nested_critical_points}
    Let $M_I$ be a nested manifold and let $f\colon M_I\to \bbr$ be a nested Morse function. The critical points of $f$ are of the form $p^i_j$, with $0 \leq j \leq d_i$, an index $j$ critical point of $M_{d_i}$.
\end{theorem}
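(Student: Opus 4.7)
The plan is to unpack Definition~\ref{def: indiv/nested Morse} and invoke standard Morse theory on each stratum. The statement is essentially a direct bookkeeping consequence of the nested Morse condition, so the proof should be short.

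First, I would fix an arbitrary critical point $p\in \crit(f)$. By the definition $\crit(f)=\bigcup_i \crit(f_i)$, there exists at least one index $i$ with $p\in \crit(f_i)$. The nested Morse condition requires $\crit(f_i)\cap \crit(f_j)=\varnothing$ for $i\neq j$, so this index $i$ is \emph{unique}. This pins down the superscript in $p_j^i$ and records which submanifold is responsible for the criticality of $p$.

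Next, since $f$ is nested Morse, the restriction $f_i\colon M_{d_i}\to \bbr$ is itself Morse in the classical sense, so the critical point $p$ of $f_i$ is non-degenerate. By standard Morse theory, a non-degenerate critical point of a smooth function on a $d_i$-dimensional manifold admits a Morse chart in which $f_i$ takes the form $f_i(x_1,\dots,x_{d_i}) = f_i(p) - x_1^2 - \cdots - x_j^2 + x_{j+1}^2 + \cdots + x_{d_i}^2$ for a unique integer $0\le j\le d_i$ (the Morse index, counting the number of negative eigenvalues of the Hessian). Setting this $j$ as the subscript completes the labeling $p=p_j^i$.

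I do not foresee any real obstacle: the entire argument is a two-line consequence of (a) the disjointness of the critical sets $\crit(f_i)$ built into the definition of a nested Morse function, and (b) the classical Morse lemma applied to $f_i$ on $M_{d_i}$. One could alternatively rephrase the first step using Lemma~\ref{lem: nested Morse iff strat Morse}, since a stratified Morse function has each critical point lying in a unique stratum $F_i=M_{d_i}\setminus M_{d_{i-1}}$, but this adds nothing beyond what the nested Morse definition already gives.
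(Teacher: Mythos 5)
Your proof is correct, and it is essentially the paper's argument with one simplification worth noting. The paper routes step (b) through \cref{lem:unpack Morse data}, identifying the full local Morse data of a critical point on the stratum $F_i$ with the Morse data of $f_i$ on $M_{d_i}$, and then appeals to ``the usual arguments'' to read off the index. You cut out the detour through Morse data entirely and invoke the classical Morse lemma directly on $f_i$, which suffices for the statement as written (which only asserts the existence of a well-defined pair $(i,j)$, not anything about the local homeomorphism type). Both arguments rest on the same two facts: the disjointness $\crit(f_i)\cap\crit(f_j)=\varnothing$ from \cref{def: indiv/nested Morse} fixes $i$ uniquely, and non-degeneracy of the critical point of the Morse function $f_i$ fixes $j$. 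Your version is slightly more economical for this particular theorem; the paper's detour through \cref{lem:unpack Morse data} pays off later when the Morse data itself (not just the index) is needed to enumerate elementary cobordisms, so the redundancy in the paper is deliberate scaffolding rather than a gap.
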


\begin{proof}
    Since $f$ is nested Morse, the critical points of $f_i$ for all $i$ are distinct. 
    By \cref{lem:unpack Morse data}, the possible Morse data for a 
    critical point on the stratum $F_i$ is given by the possible Morse data of $f_i$ as a Morse function on $M_{d_i}$. By the usual arguments, the possible Morse points of $f_i$ are of the form $p^i_j$, with $0 \leq j \leq d_i$, an index $j$ critical point of $M_{d_i}$. 
\end{proof}

In the specific case of a nested surface $M_{1<2}$, the Morse data takes the following forms.

\begin{corollary}
    Let $M_{1<2}$ be a nested surface and let $f\colon M_{1<2}\to \bbr$ be a nested Morse function. The critical points of $f$ are of the following form:
    \begin{itemize}
        \item A critical point $p^2_0$ on $M_2$ with index $0$.
        \item A critical point $p^2_1$ on $M_2$ with index $1$.
        \item A critical point $p^2_2$ on $M_2$ with index $2$.
        \item A critical point $p^1_0$ on $M_1$ with index $0$ and Morse data given by 
        $(\vert,\bullet)\times (\cup, \emptyset)$
        \item A critical point $p^1_1$ on $M_1$ with index $1$ and Morse data given by 
        $(\vert,\bullet)\times (\cap, \bullet \ \bullet)$
    \end{itemize}
\end{corollary}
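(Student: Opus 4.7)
The proof will be a direct specialization of Theorem~\ref{thm:nested_critical_points} to $I=(1<2)$, combined with Lemma~\ref{lem:unpack Morse data} and the Main Theorem of Stratified Morse Theory (Theorem~\ref{thm:normaltangentialmorsedata}) to compute the local Morse data at each critical point. First, I will apply Theorem~\ref{thm:nested_critical_points} with $d_1=1$ and $d_2=2$ to conclude that the critical points of $f$ are precisely those of the form $p^2_j$ for $0\leq j\leq 2$ on $M_2$, or $p^1_j$ for $0\leq j\leq 1$ on $M_1$. This handles the enumeration of the five cases.

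Next, for a critical point $p^2_j$ lying on the top stratum $F_2 = M_2\setminus M_1$, Lemma~\ref{lem:unpack Morse data} gives normal Morse data $(\bullet,\emptyset)$ and tangential Morse data equal to the classical (unstratified) Morse data of $f_2$ at an index $j$ critical point of the surface $M_2$; this is the reason no explicit pictures are recorded for $p^2_0$, $p^2_1$, $p^2_2$ beyond noting their index. For a critical point $p^1_j$ on the stratum $F_1 = M_1$, the same lemma yields normal Morse data consisting of the $(d_2-d_1) = 1$-dimensional normal disk $D^1$ relative to its lower endpoint, which is exactly the pair $(\vert,\bullet)$ depicted in the statement. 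The tangential Morse data is the ordinary Morse data of $f_1$ at an index $j$ critical point of the $1$-manifold $M_1$: for $j=0$ a minimum, giving a birth whose Morse data is $(\cup,\emptyset)$, and for $j=1$ a maximum, giving a death whose Morse data is $(\cap,\bullet\,\bullet)$.

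Finally, invoking Theorem~\ref{thm:normaltangentialmorsedata}, the local Morse data at $p^1_j$ is the product of the normal and tangential Morse data, which produces exactly the two products $(\vert,\bullet)\times(\cup,\emptyset)$ and $(\vert,\bullet)\times(\cap,\bullet\,\bullet)$ claimed in the statement. The only real obstacle is bookkeeping: one must confirm that the usual pictorial notation for Morse data on $1$-manifolds agrees with what Lemma~\ref{lem:unpack Morse data} produces when restricted to the $1$-manifold $M_1$ sitting inside the surface $M_2$, and that the product in Theorem~\ref{thm:normaltangentialmorsedata} is interpreted as a Cartesian product of pairs. Since the tangential picture in each case is the standard Morse model on curves, this identification is immediate and the corollary follows.
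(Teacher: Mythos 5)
Your proof is correct and follows exactly the route the paper intends (the paper states this corollary without writing a proof, since it is an immediate specialization of \cref{thm:nested_critical_points}, \cref{lem:unpack Morse data}, and \cref{thm:normaltangentialmorsedata}). You apply the theorem to enumerate the five types of critical points, then read off the normal Morse data $(D^1,\text{pt}) = (\vert,\bullet)$ for critical points on $M_1$ and the classical tangential Morse data $(\cup,\emptyset)$ and $(\cap,\bullet\,\bullet)$ for indices $0$ and $1$ on a $1$-manifold, and combine them by the product formula — which is precisely what the paper is recording.
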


\begin{figure}[ht]
\adjustbox{scale=0.90}{

\tikzset{every picture/.style={line width=0.95pt}} 

\begin{tikzpicture}[x=0.75pt,y=0.75pt,yscale=-1,xscale=1]

\draw (322.5,52.74) node  {\includegraphics
[width=422.25pt,height=73.1pt]
{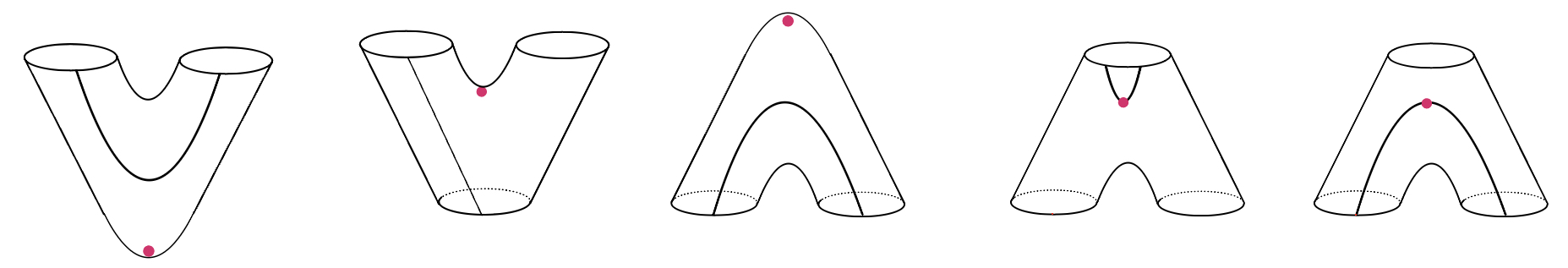}};

\draw (96,82.95) node  [font=\footnotesize]  {$p_{0}^{2}$};
\draw (215,46.95) node  [font=\footnotesize]  {$p_{1}^{2}$};
\draw (325,22.95) node  [font=\footnotesize]  {$p_{2}^{2}$};
\draw (446,50.95) node  [font=\footnotesize]  {$p_{0}^{1}$};
\draw (554,51.95) node  [font=\footnotesize]  {$p_{1}^{1}$};

\end{tikzpicture}

}
\caption{The points in red mark examples of each type of critical point that a nested Morse function $M_{1<2} \to \mathbb{R}$ can have. Note that these pictures do not just show the local neighborhood of the indicated points but also include other critical points of different types.
}
\label{fig:critical-points-index}
\end{figure}

\subsection{Nested Cerf decompositions}
Using the nested Morse theory of the previous section, we now outline how any nested cobordism can be written as a composition of elementary cobordisms (see~\cref{defn: element cob}); these elementary cobordisms will be the generators of our cobordism category.

\begin{definition}[\cite{FreedNotes}, Definition 23.6]\label{defn: excellent}
    Let $W_I\colon M_0 \Rightarrow M_1$ be a nested cobordism. A nested Morse function $f\colon W_I\to \bbr$ is \emph{excellent} if 
    \begin{enumerate}
        \item $f(M_0)=a_0$ is the minimum of $f$;
        \item $f(M_1)=a_1$ is the maximum of $f$.
    \end{enumerate}
    We will call the critical points of $f$ $p_1,\dots,p_N$, with the respective critical values $v_1,\dots, v_N$ which satisfy
        \[ a_0 < v_1<\dots<v_N < a_1.\]
\end{definition}

\begin{lemma}\label{lemma:elementary}
    Given any nested cobordism $W_I\colon M_0 \Rightarrow M_1$, an excellent nested function $f\colon W_I \to \bbr$ always exists. 
\end{lemma}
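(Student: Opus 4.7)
The plan is to build the excellent nested Morse function in two stages: first construct a smooth nested function with the correct boundary behavior and no critical points near the boundary, then perturb it into a nested Morse function while preserving that behavior, and finally separate coincident critical values.

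For the first stage, I would choose a collar neighborhood of $\partial W_{d_n} = M_0 \amalg M_1$ that is compatible with the nested structure. Such a collar exists by the same argument used in the proof of \cref{thm: pushouts give composition}, applied inductively so that the lower-dimensional submanifolds $W_{d_i}$ are cylindrical in the collar. Using a smooth partition of unity on $W_{d_n}$, construct $h\colon W_{d_n}\to \bbr$ that equals $a_0 + t$ on the collar $[0,\varepsilon)_t \times M_0$, equals $a_1 - (\varepsilon - t)$ on the collar of $M_1$, and satisfies $a_0 < h < a_1$ elsewhere. The collar compatibility forces every restriction $h_i := h|_{W_{d_i}}$ to be smooth, to take constant value $a_0$ on $M_0$ and $a_1$ on $M_1$, and to have no critical points inside the collars.

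For the second stage, pick a bump function $\rho$ supported in the interior of $W_{d_n}$ and equal to $1$ outside a slightly thicker collar, and consider perturbations of the form $h + \rho\cdot g$ for small smooth $g\colon W_{d_n}\to\bbr$. By \cref{thm: nested Morse dense in smooth}, nested Morse functions form an open dense subset of smooth functions in the $C^\infty$ topology, so a generic sufficiently small $g$ produces a nested Morse function $f := h + \rho\cdot g$ which agrees with $h$ on the collar. Consequently $f|_{M_0} = a_0$, $f|_{M_1} = a_1$, all critical points lie in the interior, and (by compactness, after shrinking $g$ if necessary) all critical values lie strictly between $a_0$ and $a_1$.

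Finally, to upgrade $f$ to an excellent function we must arrange $v_1 < \cdots < v_N$. Nested Morseness only guarantees distinct critical values within each individual $f_i$, so values across distinct strata may still collide. Since critical points are isolated, choose disjoint small neighborhoods around each $p_k$ and add to $f$ a small bump supported in that neighborhood, shifting the critical value by a prescribed amount without altering the nested Morse structure or introducing new critical points. A generic such shift produces pairwise distinct critical values. The main subtlety is the relative nature of the perturbation: we need to approximate by a nested Morse function \emph{while keeping the boundary behavior fixed}. This is handled by the bump-function cutoff above, together with the openness of the nested Morse condition, which ensures that perturbing only on the interior retains nested Morseness globally once achieved.
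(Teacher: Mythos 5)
Your overall strategy mirrors the paper's: build a smooth function with the right boundary behavior and no critical points near $\partial W$, then perturb it relative to a collar to achieve nested Morseness while keeping the boundary level sets fixed. Your Stage~1 is a reasonable alternative to the paper's, which instead begins with an excellent Morse function on the top-dimensional manifold $W_{d_n}$ (cited from Milnor) and then perturbs; both produce a function with the right boundary behavior to work with.

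The gap is in Stage~2. You want to perturb $h$ to a nested Morse function while keeping it fixed near the boundary, and you propose to do so by considering perturbations of the form $h + \rho\cdot g$. You then invoke \cref{thm: nested Morse dense in smooth} (density and openness of nested Morse functions among all smooth proper functions) to conclude that a generic such perturbation is nested Morse. But that theorem is an \emph{absolute} density statement: it says nothing about density within the affine subspace $\{h+\rho g : g\}$ of functions agreeing with $h$ on the collar, and a priori the dense open set of nested Morse functions could miss this subspace entirely. What is actually needed is a \emph{relative} transversality statement. This is exactly what the paper supplies: the SMT density theorem is proved via Thom jet transversality, and the extension theorem for Thom transversality (cited as \cite[Chapter~2.3]{guillemin2010differential}) lets one achieve jet transversality while prescribing the function on the closed collar, since transversality already holds there (vacuously, as $h$ has no critical points in the collar). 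Your closing appeal to ``openness of the nested Morse condition'' does not fill this gap; openness ensures nearby functions \emph{remain} nested Morse once one is found, but does not produce one inside the constrained family.

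Your Stage~3 is not in the paper's proof and reflects a careful reading of \cref{def: indiv/nested Morse}, which as written asks only that each $f_i$ individually have distinct critical values, so that critical values on different strata could coincide. Under that literal reading your extra bump is needed to obtain $v_1<\dots<v_N$. However, \cref{lem: nested Morse iff strat Morse} identifies nested Morse with stratified Morse, and the latter (\cref{def: strat Morse func}) demands globally distinct critical values; under that intended reading Stage~3 is redundant (though harmless). This is a minor point next to the Stage~2 gap, which should be repaired by replacing the density-plus-cutoff argument with an explicit appeal to the extension form of Thom transversality.
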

\begin{proof}
Note that we can find an excellent function $f\colon W_{d_n}\to \RR$ on the top dimensional manifold, see e.g. \cite[Lemma 2.6]{milnor}.
The proof in \cite{SMT88} showing that stratified (and thus nested) Morse functions are dense in the set of all smooth proper functions relies on application of Thom transversality on the map from $W_{d_n}$ into the jet space defined by the function $f$.
By the extension theorem for Thom transversality \cite[Chapter 2.3]{guillemin2010differential}, we can perturb the function $f$ to be transverse while keeping it constant on $\partial W$.
Hence, we can apply Theorem~\ref{thm: nested Morse dense in smooth} to perturb $f$ to a nested Morse function $f'\colon W_I \to \bbr$ while maintaining the condition that it is excellent.
\end{proof}

We use the notion of an excellent nested function to decompose our nested cobordisms into their elementary building blocks, called \emph{elementary cobordisms}.
\begin{definition}\label{defn: element cob}
    A nested cobordism $W_I\colon M_0\Rightarrow M_1$ is an \emph{elementary cobordism} if it admits an excellent nested function with at most one critical point. 
\end{definition}

\begin{lemma}
    Any nested cobordism between $M_{I-1}$ and $M'_{I-1}$ can be decomposed into elementary cobordisms. 
\end{lemma}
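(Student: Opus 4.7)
The plan is to use an excellent nested Morse function to slice $W$ at regular values strictly between consecutive critical values, and then to recognize each slice as an elementary cobordism.

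First, by \cref{lemma:elementary}, choose an excellent nested Morse function $f\colon W_I\to \bbr$ with $f(M_{I-1}) = a_0$, $f(M'_{I-1}) = a_1$, and critical values $a_0 < v_1 < \cdots < v_N < a_1$ corresponding to critical points $p_1, \dots, p_N$. Pick regular values $c_0, c_1, \dots, c_N$ satisfying
\[
a_0 = c_0 < v_1 < c_1 < v_2 < c_2 < \cdots < v_N < c_N = a_1,
\]
so that $[c_{k-1}, c_k]$ contains exactly one critical value $v_k$. Define $W^{(k)}_I := f^{-1}([c_{k-1}, c_k])$, equipped with the restriction of the nested stratification; since each $c_k$ is a regular value of every $f_i$, the level set $f^{-1}(c_k)$ is a closed nested $(I-1)$-manifold (the unstratified smooth structure on each stratum is inherited from the transverse intersection with the level set), and $W^{(k)}_I$ is a nested cobordism between $f^{-1}(c_{k-1})$ and $f^{-1}(c_k)$.

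Next, I would verify that each $W^{(k)}_I$ is elementary. The restriction $f|_{W^{(k)}_I}$ is a nested Morse function on $W^{(k)}_I$ with a unique critical point $p_k$ in its interior, and it attains its minimum on $f^{-1}(c_{k-1})$ and maximum on $f^{-1}(c_k)$. Rescaling linearly (or using a bump function to push the critical value strictly into the interior) produces an excellent nested Morse function on $W^{(k)}_I$ with exactly one critical point, so $W^{(k)}_I$ is elementary by \cref{defn: element cob}.

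Finally, I would show that the composition of the elementary pieces recovers $W_I$. By \cref{lem: Morse no crit values}, $f$ has no critical values near $c_k$, so $f^{-1}([c_k - \varepsilon, c_k + \varepsilon])$ is nested homeomorphic to a collar of $f^{-1}(c_k)$, and in particular $W_I$ has collared nested boundary along each slicing level set. This collaring is exactly what is required by \cref{thm: pushouts give composition} to iteratively glue $W^{(1)}_I, W^{(2)}_I, \dots, W^{(N)}_I$ along their common boundaries and obtain a well-defined nested cobordism diffeomorphic to $W_I$. Hence
\[
W_I \;\cong\; W^{(N)}_I \circ W^{(N-1)}_I \circ \cdots \circ W^{(1)}_I
\]
as morphisms in $\Cob_I$, exhibiting $W_I$ as a composition of elementary cobordisms.

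The main obstacle is the verification that slicing at a regular value of the top-dimensional function $f_n$ is simultaneously a regular value of every $f_i$ and yields a nested submanifold with the correct smooth structure at every stratum. This follows from the fact that a regular value of $f_n$ restricted to the stratum $M_{d_i} \setminus M_{d_{i-1}}$ is automatically a regular value of $f_i$ on $M_{d_i}$ once one is away from critical points, together with the Whitney stratification argument of \cref{lem: nested surface is strat} guaranteeing that the transverse intersection assembles into a nested manifold.
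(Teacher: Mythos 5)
Your proof is correct and takes essentially the same approach as the paper: both invoke \cref{lemma:elementary} to obtain an excellent nested Morse function, slice at regular values $c_0 < v_1 < c_1 < \cdots < v_N < c_N$ lying strictly between consecutive critical values, observe that each slab $f^{-1}([c_{k-1},c_k])$ is elementary, and reassemble via the collared pushout of \cref{thm: pushouts give composition}. Your "main obstacle" paragraph is slightly misphrased -- the point is not that regular values of $f_n$ are automatically regular for each $f_i$, but rather that one explicitly chooses $c_k$ away from $\crit(f)=\bigcup_i \crit(f_i)$, so that $c_k$ is a regular value of every $f_i$ simultaneously -- but the resolution you give is correct in substance.
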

\begin{proof}
    Let $W_I$ be a nested cobordism between $M_{I-1}$ and $M'_{I-1}$. By \cref{lemma:elementary}, there is an excellent nested function $f$ on $W_I$. Choose regular values $b_1,\dots, b_{N-1}$ satisfying\[
    a_0< b_1 < v_2 < \dots < b_{N-1} < v_N < a_1.
    \] Write $b_0=a_0$ and $b_N=a_1$. Then for each $1\leq i\leq N$, the nested submanifold $W_i:= f^{-1}([b_{i-1}, b_{i}])$ has at most one critical point, and hence is an elementary cobordism between $f^{-1}(b_{i-1})$ and $f^{-1}(b_{i})$, with $f^{-1}(b_0) = M_{I-1}$ and $f^{-1}(b_N) = M'_{I-1}$. Then the composition\[
    W_{N}\circ \dots \circ W_2\circ W_1
    \] is the claimed decomposition.
\end{proof}

\begin{definition}[\cite{GWW}, Defn 2.3]
    A \emph{Cerf decomposition} of a nested cobordism $W$ is a decomposition into a sequence of elementary cobordisms\[
    W = W_1 \cup_{M_1} \dots \cup_{M_{n-1}} W_n
    \] such that \begin{itemize}
        \item Each $W_i\subseteq W$ is an elementary $I$-nested cobordism embedded in $W$,
        \item Each $M_i\subseteq W$ is an embedded $(I-1)$-nested submanifold of $W$,
        \item The $W_i$ are disjoint from each other in $W$, except that $W_i\cap W_{i+1}\cong M_i$ for $i=1,\dots, n-1$
        \item $W_1\cap  \bndry W = \bndry W^-$ and $W_n\cap \bndry W = \bndry W^+$. 
    \end{itemize}

    Analogously, a \emph{Cerf decomposition} of a morphism $[W]$ in $\Cob_I$
    is a sequence $[W_1],\dots, [W_n]$, where $W_i$ are elementary cobordisms, that compose\[
    [W] = [W_1]\circ \dots \circ [W_n].
    \]
\end{definition}

\begin{lemma}\label{lem:Cerf on cob iff Cerf on class}
    A Cerf decomposition of a cobordism $W_{I}$ induces a Cerf decomposition on its diffeomorphism class $[W_{I}]$. Moreover, every Cerf decomposition of a class $[W_{I}]$ arises from a Cerf decomposition of a representative cobordism.
\end{lemma}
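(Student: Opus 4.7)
The forward direction is essentially bookkeeping. Given a Cerf decomposition $W = W_1 \cup_{M_1} \cdots \cup_{M_{n-1}} W_n$ of the cobordism, I would first observe that each $W_i$ is itself an elementary nested cobordism from $M_{i-1}$ to $M_i$ (where $M_0 = \bndry W^-$ and $M_n = \bndry W^+$), and that the gluing pattern in the definition of a Cerf decomposition is exactly the iterated pushout along the $M_i$. By \cref{thm: pushouts give composition}, this pushout represents the composition $[W_n]\circ \cdots \circ [W_1]$ in $\Cob_I$, so passing to diffeomorphism classes yields $[W] = [W_n]\circ \cdots \circ [W_1]$, which is a Cerf decomposition of $[W]$ by definition.

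For the converse, suppose $[W] = [W_n]\circ \cdots \circ [W_1]$ is a Cerf decomposition of the class, with each $[W_i]$ an elementary morphism. The plan is to build a representative of $[W]$ that carries a Cerf decomposition realizing the prescribed factorization, and then transport the result to $W$. First, I would choose a representative cobordism $W_i\colon M_{i-1}\Rightarrow M_i$ of each class $[W_i]$; each is elementary by assumption, so admits an excellent nested Morse function with at most one critical point. Next, invoke \cref{thm: pushouts give composition} iteratively to form the nested cobordism
\[
W' \;=\; W_1 \cup_{M_1} W_2 \cup_{M_2} \cdots \cup_{M_{n-1}} W_n,
\]
with the collared pushout smooth structure at each level. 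By construction the embedded pieces $W_i\subseteq W'$ and the intermediate $M_i\subseteq W'$ satisfy the four bulleted conditions in the definition of a Cerf decomposition, so $W'$ carries an honest Cerf decomposition.

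Finally, I would pull this decomposition back to $W$ itself. Because $[W'] = [W_n]\circ\cdots\circ[W_1] = [W]$ in $\Cob_I$, there is a nested diffeomorphism $\Phi\colon W' \to W$ that is the identity on $\bndry W^-\amalg \bndry W^+$ (this is precisely the equivalence relation of \cref{defn:equiv reln on nested cobs}). Setting $\widetilde W_i := \Phi(W_i)$ and $\widetilde M_i := \Phi(M_i)$ gives embedded elementary pieces $\widetilde W_i \subseteq W$ and intermediate nested submanifolds $\widetilde M_i \subseteq W$ satisfying the Cerf decomposition axioms, with $[\widetilde W_i] = [W_i]$ for each $i$ since $\Phi$ restricts to a nested diffeomorphism on each piece and preserves the relevant boundary identifications.

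The main obstacle I anticipate is the bookkeeping around collars and the compatibility of the pushout smooth structure with the prescribed elementary classes. Concretely, one must ensure that the nested diffeomorphism $\Phi$ can be chosen so that the images $\Phi(W_i)$ are embedded nested submanifolds of $W$ (not merely topological subsets). This is handled by a standard collaring/isotopy argument already implicit in the proof of \cref{thm: pushouts give composition}: up to nested diffeomorphism, every cobordism has collars at its boundary, and the iterated pushout smooth structure is canonical up to diffeomorphism rel boundary, so any two representatives obtained by different choices of collars are related by such a $\Phi$.
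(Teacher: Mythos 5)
Your proof is correct and the core of the argument coincides with the paper's: the forward direction falls out of \cref{thm: pushouts give composition}, and for the converse you choose collared representatives and glue to produce a representative $W'$ of $[W]$ carrying an honest Cerf decomposition. Your final paragraph, transporting the decomposition from $W'$ back to the fixed representative $W$ via a boundary-preserving nested diffeomorphism $\Phi$, is correct but more than the lemma asks for — the statement only requires producing \emph{some} representative with a Cerf decomposition, which $W'$ already is.
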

\begin{proof}
If $W = W_1\cup_{M_1} \dots \cup_{M_{n-1}} W_n$ is a Cerf decomposition of the nested cobordism $W=W_I$, then the intersection conditions on the $M_i$ ensure that \[
[W] = [W_1]\circ \dots \circ [W_n]
\] in $\Cob_{I}$. On the other hand, suppose that $[W]=[W_1]\circ \dots \circ [W_n]$ is a Cerf decomposition of the morphism $[W]$ in $\Cob_{I}$. Choose representatives $W_1,\dots, W_n$ for each of the cobordism classes that admit collar neighborhoods of the shared boundaries $M_i$ in both $W_i$ and $W_{i+1}$. 
Then the glued cobordism\[
W' = W_1\cup_{M_1} \dots \cup_{M_{n-1}} W_n
\] is a representative of $[W]$ and $W'$ has a Cerf decomposition via the embeddings $W_i\hookrightarrow W'$, $M_i\hookrightarrow W'$.
\end{proof}

\begin{corollary}
    Any nested cobordism has a Cerf decomposition.
\end{corollary}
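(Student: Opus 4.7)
The plan is to combine \cref{lemma:elementary} with the level-set decomposition already sketched in the preceding lemma, and then verify that this decomposition satisfies the embedded-submanifold and intersection axioms in the definition of a Cerf decomposition. Once we have this for a representative cobordism, \cref{lem:Cerf on cob iff Cerf on class} upgrades the result to morphisms in $\Cob_I$.

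First I would fix any nested cobordism $W_I\colon M_{I-1}\Rightarrow M'_{I-1}$ and, by \cref{lemma:elementary}, equip it with an excellent nested Morse function $f\colon W_I\to\bbr$ with critical values $a_0 < v_1 < \cdots < v_N < a_1$. Next, choose regular values $b_1,\dots,b_{N-1}$ interlacing the critical values, $v_i < b_i < v_{i+1}$, and set $b_0 := a_0$, $b_N := a_1$, together with
\[
W_i \;:=\; f^{-1}\!\bigl([b_{i-1},b_i]\bigr), \qquad M_i \;:=\; f^{-1}(b_i).
\]
By construction $f|_{W_i}$ is an excellent nested Morse function containing the single critical value $v_i$, so each $W_i$ is an elementary nested cobordism in the sense of \cref{defn: element cob}. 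The intersection conditions are then immediate from the interval decomposition: $W_i\cap W_{i+1} = f^{-1}(b_i) = M_i$, pairs $W_i, W_j$ with $|i-j|\geq 2$ meet in the empty set, and by excellency of $f$ we have $W_1\cap\partial W_I = f^{-1}(a_0) = \partial W^-$ and $W_N\cap\partial W_I = f^{-1}(a_N) = \partial W^+$.

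The only genuine content to check is that the regular level set $M_i = f^{-1}(b_i)$ is itself an embedded nested $(I-1)$-submanifold of $W_I$. Since $f$ is nested Morse, each restriction $f_j := f|_{W_{d_j}}$ is Morse in the classical sense, and the non-degeneracy of the critical points together with $b_i$ being a regular value implies (by the usual regular value theorem applied level by level) that $f_j^{-1}(b_i)$ is a smooth closed $(d_j-1)$-submanifold of $W_{d_j}$. These submanifolds nest compatibly because the $W_{d_j}$ do, so $M_i$ inherits the structure of a nested $(I-1)$-manifold embedded in $W_I$. This is arguably the only step where one might expect trouble; it is the main reason we insisted in \cref{def: indiv/nested Morse} that $f$ be individually Morse on each stratum rather than merely stratified Morse in some weaker sense. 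With this verified, $W = W_1\cup_{M_1}\cdots\cup_{M_{N-1}} W_N$ is a Cerf decomposition of the cobordism $W_I$, and applying \cref{lem:Cerf on cob iff Cerf on class} yields the corresponding decomposition of the morphism class $[W_I]$ in $\Cob_I$.
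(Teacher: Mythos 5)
Your proof is correct and takes essentially the same approach as the paper: apply \cref{lemma:elementary} to get an excellent nested Morse function, interpolate regular values between the critical values, and let the preimages of the resulting subintervals be the elementary pieces. The only difference is that you verify the intersection and embedding axioms more explicitly (a good thing) and tack on the passage to morphism classes via \cref{lem:Cerf on cob iff Cerf on class}, which the paper actually reserves for a separate follow-up corollary.
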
\begin{proof}
 Let $W$ be a nested cobordism. By \cref{lemma:elementary}, there is an excellent nested Morse function $f\colon W\to \RR$ and regular values $b_0<b_1<\dots <b_n$ such that\[
    W = W_1\cup_{M_1} \dots \cup_{M_{n-1}} W_n
    \] is a Cerf decomposition, where $W_i := f^{-1}([b_{i-1}, b_i])$ are elementary bordisms between the level-sets $M_i:= f^{-1}(b_i)$. Note that the properties of $f$ we need here are:\begin{itemize}
        \item $f^{-1}(b_0) = \bndry W^-$ and $f^{-1}(b_n) = \bndry W^+$,
        \item there is a bijection $\crit(f)\to f(\crit(f))$ between critical points and critical values (i.e. $f$ has distinct values at each of the critical points, which are isolated from each other),
        \item $b_0,\dots, b_n\in \RR$ are regular values of $f$ so that each $(b_{i-1},b_i)$ contains at most one critical value of $f$.
    \end{itemize}
\end{proof}

This corollary, combined with \cref{lem:Cerf on cob iff Cerf on class}, implies the following result.

\begin{corollary}\label{cor: every cob class has Cerf decomp}
    Every morphism in $\Cob_{I}$ has a Cerf decomposition.
\end{corollary}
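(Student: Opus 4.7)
The plan is to assemble this corollary as an immediate consequence of the two preceding results, since the hard analytic content (existence of excellent nested Morse functions, stratified Morse theory, etc.) has already been done in the previous corollary and in \cref{lem:Cerf on cob iff Cerf on class}.

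First I would let $[W]$ be an arbitrary morphism in $\Cob_I$ and choose a representative nested cobordism $W_I\colon M_{I-1}\Rightarrow M'_{I-1}$. The preceding corollary, whose proof uses \cref{lemma:elementary} to produce an excellent nested Morse function on $W_I$ together with a choice of regular values separating the critical values, provides a Cerf decomposition
\[
W_I = W_1 \cup_{M_1} \cdots \cup_{M_{n-1}} W_n
\]
at the level of cobordisms.

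Next I would invoke \cref{lem:Cerf on cob iff Cerf on class}, which states precisely that a Cerf decomposition of a cobordism $W_I$ induces a Cerf decomposition of its diffeomorphism class. Applying this to the decomposition above yields
\[
[W] = [W_1]\circ \cdots \circ [W_n]
\]
in $\Cob_I$, with each $[W_i]$ the class of an elementary cobordism. This is by definition a Cerf decomposition of the morphism $[W]$.

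Since $[W]$ was arbitrary, every morphism in $\Cob_I$ admits a Cerf decomposition. There is no real obstacle to this argument; the only thing to keep track of is that the collar-neighborhood choices implicit in representing a composite class by a glued cobordism (used in the proof of \cref{lem:Cerf on cob iff Cerf on class}) are compatible with the decomposition coming from the regular values of the excellent nested Morse function, which they are because one can always smoothly thicken level sets to collars without changing the diffeomorphism class.
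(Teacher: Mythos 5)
Your proof is correct and follows exactly the paper's reasoning: apply the preceding corollary to obtain a Cerf decomposition of a representative cobordism, then pass to the class via \cref{lem:Cerf on cob iff Cerf on class}. The closing remark about collar compatibility is a nice sanity check but not needed, since the forward direction of \cref{lem:Cerf on cob iff Cerf on class} already handles it.
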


The elementary cobordisms in our Cerf decomposition have zero or one critical point. \cref{thm:nested_critical_points} gives a complete list of the possible types of critical points. Elementary cobordisms without critical points are given by mapping cylinders.
\begin{lemma}
Elementary cobordisms with zero critical points are mapping cylinders of self-diffeomorphisms up to pseudo-isotopy.
\end{lemma}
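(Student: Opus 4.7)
The plan is to use the gradient-like flow of an excellent nested Morse function to trivialize the cobordism. Let $W\colon M\Rightarrow M'$ be an elementary cobordism with zero critical points. By \cref{lemma:elementary}, choose an excellent nested Morse function $f\colon W\to[0,1]$ with no critical points, so that each restriction $f_i := f|_{W_{d_i}}$ is a submersion onto $[0,1]$. The main step is to produce a vector field $V$ on $W_{d_n}$ which is tangent to every $W_{d_i}$ and satisfies $df(V)=1$, so that its time-one flow gives the desired trivialization.

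We construct $V$ inductively on the stratum index. On $W_{d_1}$, the set of vector fields $V$ with $df_1(V)=1$ is a non-empty affine subbundle of $TW_{d_1}$ (since $f_1$ is a submersion); local solutions exist (take $\partial/\partial f_1$ in coordinates where $f_1$ is a coordinate), and glue by a partition of unity to a global $V^{(1)}$. Suppose $V^{(i)}$ is built on $W_{d_i}$. Near any point of $W_{d_i}$ in $W_{d_{i+1}}$, use a tubular neighborhood to extend $V^{(i)}$ to a local vector field $\widetilde V$ tangent to $W_{d_i}$, then correct $\widetilde V$ by adding a vector field in $\ker(df_{i+1})$ that vanishes on $W_{d_i}$ so that $df_{i+1}(V)=1$; this correction exists because $\ker(df_{i+1})$ is a smooth rank-$(d_{i+1}-1)$ subbundle. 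Away from $W_{d_i}$, any local solution to $df_{i+1}(V)=1$ suffices. Since both the requirement $V|_{W_{d_i}}=V^{(i)}$ and the constraint $df_{i+1}(V)=1$ are affine in $V$ and hence preserved by convex combinations, partition-of-unity gluing produces the extension $V^{(i+1)}$.

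Setting $V=V^{(n)}$, the flow $\Phi_t$ is defined for $t\in[0,1]$ by properness of $f$, preserves each stratum by tangency, and satisfies $f\circ\Phi_t=f+t$. The restriction $\phi := \Phi_1|_M\colon M\to M'$ is a nested diffeomorphism, and the map $(x,t)\mapsto \Phi_t(x)$ is a nested diffeomorphism $M\times[0,1]\to W$ which is the identity on $M\times\{0\}$ and agrees with $\phi$ on $M\times\{1\}$. Together with any fixed identification $M\cong M'$ (as in \cref{rmk: diffeo up to isotopy}, where objects are taken up to diffeomorphism), this exhibits $W$ as the mapping cylinder of a nested self-diffeomorphism of $M$.

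For uniqueness up to pseudo-isotopy, any two choices of $f$ and $V$ produce self-diffeomorphisms $\phi_0,\phi_1$ of $M$ whose mapping cylinders are both diffeomorphism-equivalent to $W$ in $\Cob_I$, and hence to each other. \cref{cor: automorphisms are diffeos up to pseudo-isotopy} then immediately gives that $\phi_0$ and $\phi_1$ are nested pseudo-isotopic. The main technical obstacle is the inductive construction of $V$: arranging the extension from stratum to stratum so as to simultaneously preserve tangency to the lower stratum and the affine condition $df(V)=1$. This is managed by correcting with vertical vector fields that vanish on the lower stratum, which does not disturb the induction hypothesis.
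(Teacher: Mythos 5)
Your proof is correct, and it takes a more hands-on route than the paper. The paper's proof invokes \cref{lem: Morse no crit values} (the stratified Morse theory lemma that level sets are nested \emph{homeomorphic} across a critical-value-free interval) to conclude $W_I \cong M_{I-1}\times[0,1]$, and then appeals to \cref{cor: automorphisms are diffeos up to pseudo-isotopy}. You instead build the trivialization directly: you construct a stratum-preserving gradient-like vector field $V$ with $df(V)=1$ by an inductive partition-of-unity argument over the strata, and its flow gives an explicit nested \emph{diffeomorphism} $M\times[0,1]\to W$. This is the standard ``controlled vector field'' construction from stratified Morse theory, and your inductive extension (extend by a tubular neighborhood, correct with a vertical field vanishing on the lower stratum, glue affinely) is a clean way to produce it for nested manifolds. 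In fact, your argument is slightly more careful than the paper's: the cited lemma from Goresky--MacPherson is stated at the level of homeomorphisms, while morphisms in $\Cob_I$ are diffeomorphism classes of cobordisms, so one really wants the smooth trivialization you build. Both proofs then close in the same way, using \cref{cor: automorphisms are diffeos up to pseudo-isotopy} for the ``up to pseudo-isotopy'' clause. One small wording point: you should say the identification $M\cong M'$ is supplied by $\phi=\Phi_1|_M$ itself (or by the skeleton reduction of \cref{rmk: diffeo up to isotopy}), so that the mapping cylinder is genuinely of a \emph{self}-diffeomorphism; you essentially say this, but it is the one step worth stating explicitly.
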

\begin{proof}
Let $f_I$ be an excellent nested function with no critical points on a cobordism $W_I$.
By \cref{lem: Morse no crit values}, $f^{-1}(a)$ is nested homeomorphic for every value of $a$ in the image. Hence, $W_I \cong M_{I-1} \times [0,1]$. 
It follows from \cref{cor: automorphisms are diffeos up to pseudo-isotopy} that nested cobordisms of this form are given by mapping cylinders of diffeomorphisms of the boundary up to nested pseudo-isotopy.
\end{proof}

\section{The striped cylinder cobordism category $\cyl$}\label{section3:cyl}

\subsection{Defining $\cyl$}

We now restrict our attention to $\Cob_{1<2}$, which is the nested cobordism category with objects $(0<1)$-manifolds and morphisms diffeomorphism classes of $(1<2)$-cobordisms between them. 
In the current paper we consider the subcategory $\cyl^c$ of $\Cob_{1<2}$ where objects are $(0<1)$-manifolds given by points on $S^1$ and cobordisms are restricted to nested surfaces where the surface is $S^1\times [0,1]$.  
We will moreover quotient this cobordism subcategory by the relation that contractible circles are set to zero. The resulting category we denote $\cyl$. 

\begin{definition}
    Let $M=(S^1,M_0)$ and $M'=(S^1, M'_0)$ be \textit{marked circles}: closed, oriented $(0<1)$-manifolds with background manifold $S^1$. A \textit{
    striped cylinder cobordism} from $M$ to $M'$ is an oriented $(1<2)$-manifold with boundary, $C=(C_2, C_1)$, where $C_2=S^1\times [0,1]$, 
    along with orientation-preserving nested diffeomorphisms 
    \[\begin{tikzcd}
            M \ar[r, hook] & C & \overline{M'}\ar[l, hook']
    \end{tikzcd}
    \] which map $M$ and $M'$ diffeomorphically (as nested manifolds) onto the in- and out-boundary of $C$, respectively.
\end{definition}

\begin{definition}
    $\cyl^c$ is the subcategory of $\Cob_{1<2}$ with objects marked circles and morphisms striped cylinder cobordisms up to nested diffeomorphism equivalence.
\end{definition}

\begin{definition}\label{defn:equiv reln on nested cobs}
Let $W$ be any 
nested $(1<2)$-cobordism.
We have $W_1 = W_1^\bndry  \sqcup W_1^{nc} \sqcup W_1^c$, where $W_1^\bndry$ are the components of $W_1$ with boundary, $W_1^{nc}$ are components that map non-trivially into $\pi_1 (W_2)$ and $W_1^{c}$ are contractible loops in $W_2$. We define the \textit{circle reduced} version of $W$ to be $\widetilde{W}=(W_2, W_1^\bndry  \sqcup W_1^{nc})$.
Two nested cobordisms $W$ and $W'$ from $M$ to $M'$ are called \textit{circle equivalent} if we have a diagram
\[
    \begin{tikzcd}
        & W \ar[dd, "f"] &\\
       M\ar[ru, hook]\ar[rd, hook] && \overline{M'}\ar[ul, hook'] \ar[ld, hook']\\
        & W' &
    \end{tikzcd} \] 
such that $f$ is a nested diffeomorphism when restricted to $\widetilde{W}$ and $\widetilde{W'}$. 
\end{definition}

Note that the quotient map taking nested cobordisms to their circle equivalence class is well-defined on diffeomorphism classes of cobordisms and leaves the in- and outgoing boundaries of the cobordism invariant, so that we can make the following definition.

\begin{definition}
Let $\Cob_{1<2}^r$ be the \textit{circle reduced nested cobordism category} with morphisms given by nested cobordisms modulo diffeomorphism and circle equivalence, and let $F: \Cob_{1<2} \rightarrow \Cob_{1<2}^r$ be the canonical quotient functor.
\end{definition}

The definition below will be useful in \cref{section5:TLalgebras}.

\begin{definition}
Let $\Cob_{1<2}^a$ be the category described as follows:
\begin{itemize}
        \item The objects of $\Cob_{1<2}^a$ are those of $\Cob^r_{1<2}$.
        \item A morphism $\alpha\colon S^1_n \to S^1_m$ is an equivalence class of $(1<2)$-nested cobordisms in $\Cob^r_{1<2}$ along with a natural number $\mu\in \mathbb{Z}_{\geq 0}$. 
        \item  Let $\mu(\alpha, \beta)$ denote the number of new contractible loops that is formed by the composition $\alpha\circ \beta$ of two nested cobordisms $\alpha$ and $\beta$.
        Composition in $\Cob_{1<2}^a$ are given by $(\alpha, \mu)\circ (\beta, \nu) = (\alpha \circ \beta, \mu+\nu+\mu(\alpha, \beta))$, where $\alpha \circ \beta$ is composition of nested cobordism classes as in $\Cob^r_{1<2}$ (with the $\mu(\alpha, \beta)$-many contractible closed loops removed).
    \end{itemize}
\end{definition}

\begin{remark}
The functor $F$ factors as  
        \[\begin{tikzcd}
       \Cob_{1<2} \ar[r, "F^a"] \ar[rr, bend right, swap, "F"]
       & \Cob_{1<2}^a \ar[r, "F^r"] & \Cob_{1<2}^r  
    \end{tikzcd}
    \] 
\end{remark}

\begin{definition}\label{defn:cyla and cyl}
The categories $\cyl^a$ and $\cyl$ are defined as the image in $\Cob_{1<2}^a$ and $\Cob_{1<2}^r$ of the functors $F^a$ and $F$ respectively, restricted to the subcategory $\cyl^c$.
        \[\begin{tikzcd}
       \Cob_{1<2} \ar[r, "F^a"] &  \Cob_{1<2}^a \ar[r, "F^r"] & \Cob_{1<2}^r \\
        \cyl^c  \ar[u, hook] \ar[r, swap, "F^a"] & \cyl^a  \ar[u, hook] \ar[r, swap, "F^r"] & \cyl\ar[u, hook]
    \end{tikzcd}
    \] 
\end{definition}

We will now restrict ourselves to considering the category $\cyl$.
As in~\cref{thm: pushouts give composition}, composition in $\cyl$ is again given by pushouts that are defined up to diffeomorphism. 

Such a composition may create new contractible circles, in which case the composite is circle equivalent to the cobordism with these circles removed.
Immediate from the definition is the fact that composition is associative and that cylinders that are nested diffeomorphic to $(C_2, C_1) = (S^1\times[0,1], M_0\times [0,1])$ (potentially decorated with contractible circles), together with inclusion maps on the boundary that are pseudo-isotopic to the identity, are identity morphisms in the category.

\begin{remark}
    Up to nested diffeomorphism, oriented $(0<1)$-manifolds with background manifold diffeomorphic to $S^1$ are given by $S^1$ with a certain number of marked points. 
    By \cref{propn: diffeo gives cobord}, nested diffeomorphic manifolds are isomorphic as objects in the category. Since a category is equivalent to its skeleton, we can think of $\cyl$ as having objects given by diffeomorphism classes of one circle with $k$ marked points for every $k\geq 0$, which we denote $S_k^1$. In order to keep track of the way we compose cobordisms, we endow $S_k^1$ with a preferred marked point which we denote by $0$. We orient $S^1_k$ clockwise and label the other marked points $1,\dots, k-1$ accordingly. 
\end{remark}

\begin{remark}\label{rmk:Cob 2cat}
    Note that $\Cob_{1<2}$ could also be thought of in the context of a fully extended 2-dimensional cobordism category, but where there is the additional data of the nested structure (see, for example, \cite{schommerpries2014classification} or \cite{LP:extended} for the non-nested case). In this case, $\Cob_{1<2}$ could be described as the hom-category arising from endomorphisms of the object $\varnothing$. It would be an interesting question to explore the algebraic structure this nested version of the fully extended cobordism category would give, but outside the scope of the current work.
\end{remark}

\subsection{Generators for $\cyl$}

In the case of a morphism $C_{1<2}$ in $\cyl$, since there are no critical points on $C_2$, the elementary cobordisms only involve critical points on the 1-dimensional submanifold $C_1$. The following definitions are similar to ones in~\cite[Section 2.2]{penneys}, but in our case there are no shadings. 

\begin{definition}\label{defn: defn of gen}
We introduce the following names for these elementary cobordisms in $\cyl$:
    \begin{itemize}
        \item[$\id_k$:] The identity cobordism on $S^1_k$
        \item[$\tw_k$:] The twist on $S^1_k$, in the clockwise direction; meaning that point $i$ is connected to point $i+ 1 \pmod{k}$.
        \item[$\bi_k^i$:] The birth cylinder cobordism that maps $S^1_k$ to $S^1_{k+2}$, where the birth arc goes from point $i$ to point $i+1\pmod{k+2}$  on $S^1_{k+2}$, and is isotopic to the clockwise arc from point $i$ to point $i+1\pmod{k+2}$ on $S^1_{k+2}$. 
        The points on $S^1_k$ are connected to the remaining points on $S^1_{k+2}$ by an arc as follows:
            \begin{enumerate}
                \item if $i=0$, point $0$ is connected to point $2$;
                \item if $0<i<k+1$, point $0$ is connected to point $0$;
                \item if $i=k+1$, point $0$ is connected to point $k$.
            \end{enumerate}
            This assignment determines how the remaining points are attached.
        
        \item[$\de_k^i$:] The death cylinder cobordism that maps $S^1_{k}$ to $S^1_{k-2}$, where the death arc goes from point $i$ to point $i+1\pmod{k}$ on $S^1_{k}$, and is isotopic to the clockwise arc from point $i$ to point $i+1\pmod{k}$ on $S^1_{k}$.
        The remaining points on $S^1_k$ are attached to the points on $S^1_{k-2}$ as follows:
            \begin{enumerate}
                \item if $i=0$, point $2$ is connected to point $0$;
                \item if $0<i<k-1$, point $0$ is connected to point $0$;
                \item if $i=k-1$, point $k-2$ gets attached to point $0$.
            \end{enumerate}
            This assignment determines how the remaining points are attached.
    \end{itemize}
\end{definition}
\begin{figure}[h!]
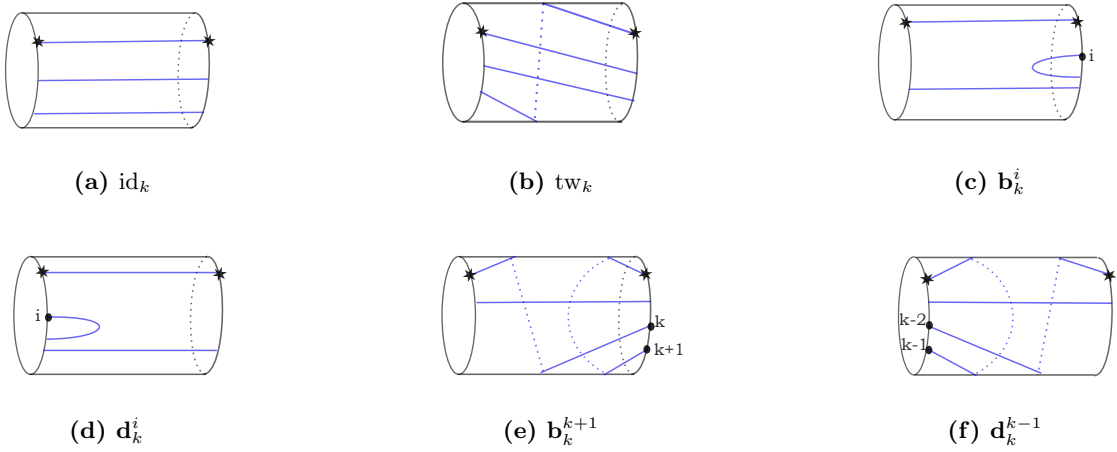

     \centering
     \begin{subfigure}[b]{0.25\textwidth}
         \centering
         \includegraphics[width=\textwidth, trim={2.8cm 22.5cm 11.8cm 3.8cm},clip]{Generators.pdf}
         \caption{$\id_k$}
         \label{fig: identity}
     \end{subfigure}
     \hfill
     \begin{subfigure}[b]{0.25\textwidth}
         \centering
         \includegraphics[width=\textwidth, trim={11.2cm 22.5cm 3.6cm 3.8cm},clip]{Generators.pdf}
         \caption{$\tw_k$}
         \label{fig: twist}
     \end{subfigure}
     \hfill
     \begin{subfigure}[b]{0.25\textwidth}
         \centering
         \includegraphics[width=\textwidth, trim={2.8cm 15.2cm 11.8cm 10.8cm},clip]{Generators.pdf}
         \caption{$\bi_k^i$}
         \label{fig: birth}
     \end{subfigure}
     \hfill
     \begin{subfigure}[b]{0.24\textwidth}
         \centering
         \includegraphics[width=\textwidth, trim={11cm 15.4cm 4cm 10cm},clip]{Generators.pdf}
         \caption{$\de_k^i$}
         \label{fig: death}
     \end{subfigure}
     \hfill
     \begin{subfigure}[b]{0.24\textwidth}
         \centering
         \includegraphics[width=\textwidth, trim={3cm 10.2cm 12cm 15.4cm},clip]{Generators.pdf}
         \caption{$\bi_k^{k+1}$}
         \label{fig: birthedge}
     \end{subfigure}
     \hfill
     \begin{subfigure}[b]{0.24\textwidth}
         \centering
         \includegraphics[width=\textwidth, trim={11cm 10.2cm 4cm 15.4cm},clip]{Generators.pdf}
         \caption{$\de_k^{k-1}$}
         \label{fig: deathedge}
     \end{subfigure}
        \caption{Generating cobordisms}
	\label{relations1}
\end{figure}

\begin{remark}
    Recall that the equivalence classes identify cobordisms that are diffeomorphism equivalent. Thus the definitions of $\bi_k^i, \de_k^i$ are well-defined, as other ways of attaching the remaining points in the prescribed fashion would differ by a Dehn twist. 
\end{remark}

\begin{theorem}\label{thm: gens of cyl}
    The elementary cobordisms in \cref{defn: defn of gen} generate all morphisms in $\cyl$.
\end{theorem}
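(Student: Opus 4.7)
The plan is to combine the Cerf decomposition from \cref{cor: every cob class has Cerf decomp} with a classification of elementary cobordisms in the specific setting of $\cyl$. By that corollary, every morphism in $\cyl$ decomposes as a composition of elementary cobordisms, so it suffices to show that each such elementary cobordism can be written as a composition of the generators in \cref{defn: defn of gen}. Elementary cobordisms split into two cases by their number of critical points.

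For an elementary cobordism with no critical points, the lemma preceding this theorem (together with \cref{cor: automorphisms are diffeos up to pseudo-isotopy}) identifies it with the mapping cylinder of a nested self-diffeomorphism of $(S^1, M_0)$ up to nested pseudo-isotopy. Orientation-preserving nested self-diffeomorphisms of $S^1_k$ are classified up to nested pseudo-isotopy by the cyclic permutation they induce on the $k$ marked points, and the resulting cyclic group is generated by the shift-by-one realized by $\tw_k$. Hence every zero-critical-point elementary cobordism is a power of $\tw_k$ (with the convention $\tw_k^k = \id_k$, and $\tw_0 = \id_0$).

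For an elementary cobordism with exactly one critical point, the fact that $C_2 = S^1\times[0,1]$ has no critical points in $\cyl$, combined with \cref{thm:nested_critical_points}, forces the critical point to lie on $C_1$ with index $0$ (birth) or $1$ (death); I treat birth as death is dual. Let $v\in(0,1)$ be the critical value. Between critical values, the level sets of $C_1$ are all homeomorphic with the cyclic order of points preserved by the horizontal flow (a gradient-like vector field), so the two endpoints of the birth arc, infinitesimally close just above $v$, remain adjacent on $S^1_{k+2}$ at $t=1$, landing at some pair $(i, i+1\bmod k+2)$. Slicing the cobordism at regular values just below and just above $v$ partitions it into a bottom piece (from $t=0$ to just below $v$), a middle piece around $v$, and a top piece (from just above $v$ to $t=1$); the bottom and top pieces have no critical points and are thus powers of $\tw_k$ and $\tw_{k+2}$ respectively by the previous case, while the middle piece is diffeomorphism-equivalent to the standard birth $\bi_k^i$. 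Altogether the elementary cobordism takes the form $\tw_{k+2}^a\circ\bi_k^i\circ\tw_k^b$ for some integers $a,b$, a composition of generators. The main obstacle is verifying this last identification, i.e., that the thin slice around the critical point is diffeomorphism-equivalent rel boundary to the standard $\bi_k^i$; this reduces to an ambient isotopy argument inside a narrow 2D cylinder, using that the slice can be chosen thin enough that the $k$ connecting arcs have no room to wind around $S^1$, so the middle configuration is determined combinatorially by the position $i$ of the birth arc.
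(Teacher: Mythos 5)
Your proof is correct and follows essentially the same route as the paper's: reduce to elementary cobordisms via Cerf decomposition, identify the zero-critical-point pieces as powers of the twist (via the mapping-cylinder/pseudo-isotopy lemma), and account for a single critical point on $C_1$ as a birth or death conjugated by twists. The only cosmetic difference is that you re-slice the one-critical-point elementary cobordism into a bottom twist, a thin middle piece, and a top twist, whereas the paper asserts directly that the through-string connections are absorbed by pre- and post-composition with $\tw_k^n$; your "thin slice has no room to wind" remark is the same content the paper leaves implicit.
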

\begin{proof}
    By~\cref{cor: every cob class has Cerf decomp}, every morphism in $\cyl^c$ can be written as a composition of elementary cobordisms. So it suffices to show that the list from \cref{defn: defn of gen} generates all elementary cobordisms in $\cyl^c$, which will then also provide a complete list of generators for the quotient category $\cyl$.
    First consider elementary cobordisms $C$ with no Morse points. These will be: 
    
    \begin{minipage}{0.95\linewidth}
    \begin{itemize}
        \item[${\id_k:}$] The identity cobordism on $S^1_k$;
        \item[${(\tw_k)^n:}$] Compositions of the positive twist on $S^1_k$, for $1<n<k-1$. \\From here on, we will denote these $\tw_k^n$.
    \end{itemize}
    \end{minipage}
    
    These are all mapping cylinders of pseudo-isotopy classes of diffeomorphisms of $S^1_k$,
    the cobordisms that permute the marked points, giving all the elementary cobordisms without Morse points. Note that the only allowable permutations of the points are by rotation because the the submanifold $C_1$ needs to be embedded.
    Further, $\tw_k^i$ and $\tw_k^{i+k}$ are diffeomorphism equivalent morphisms, by performing a Dehn twist on the cylinder. Thus $\tw_k$ generates both clockwise and counterclockwise twists.  
    
    The elementary cobordisms in $\cyl$ with one Morse point are those where the submanifold $C_1$ has a critical point. The $\bi_k^i, \de_k^i$ account for the Morse point on $C_1$. The other possibilities for how $C_1$ connects the remaining marked points on the circles are given by composing $\bi_k^i,\de_k^i$ with $\tw_k^n$, for various $0<n<k-1$. 
\end{proof}

\begin{remark}
    Note that this list of generating cobordisms for $\cyl$ is not a minimal list. In particular the $\bi_k^i, \de_k^i$ can all be generated from only $\bi_k^0$ and $\de_k^0$ by pre- and post-composing with various degrees of the twist cobordism $\tw_k$.

    We use this extended list of generators in order to write a general nested cobordism in a more efficient normal form, as done in \cref{thm: unique decomp}.
\end{remark}

\subsection{Relations in $\cyl$}\label{section4:relations}
We deduce the following list of relations; this is similar to~\cite[Theorem 2.20]{penneys}.

\begin{theorem}\label{thm: minimal list of generators}
The following relations hold in $\cyl$:

\begin{itemize}
    \item Relations with birth and death in succession: for $k\geq 0$ and $0\leq i,j\leq k$
    \begin{enumerate}
        \item contractible circles:
            $\de_{k+2}^i\circ \bi_k^i=\id_k$,
        \item snake:
        $\de_{k+2}^i\circ \bi_k^j=\id_k$ if $i=j\pm 1$,
    \item no `interaction' between birth and death: \[
    \de_{k+2}^i\circ \bi_{k}^j= \left\{\begin{array}{ll}
       \bi_{k-2}^{j-2}\circ \de_{k}^{i}  &  i< j -1,\\
        \bi_{k-2}^{j}\circ \de_{k}^{i-2} & i > j+1
    \end{array}\right.
    \]
    \end{enumerate}
    \item Relation with births only: for $k \geq 0$ and $0 \leq i, j \leq k$:
    \begin{enumerate}
        \item[(4)] 
    $\bi_{k+2}^i\circ\bi_{k}^j=\bi_{k+2}^{j+2}\circ \bi_k^{i}$ if $i \leq j$,
    \end{enumerate}
    \item Relation with deaths only: for $k \geq 4$ and $0 \leq i, j < k-1$:
    \begin{enumerate}
        \item[(5)] $\de_{k-2}^i\circ\de_{k}^j = \de_{k-2}^{j-2}\circ \de_{k}^{i} $ if $ i < j -1 $,   
    \end{enumerate}
    \item Relations with the twist: For $k\geq 0$
    \begin{enumerate}
        \item[(6)] $\tw_{k+2}\circ \bi_k^i= \bi_k^{i+1}\circ \tw_k$ for $0\leq i \leq k$,
        \item[(7)] $\tw_{k-2}\circ \de_{k}^i = \de_{k}^{i+1}\circ \tw_{k}$ for $0\leq i <  k-1$,
        \item[(8)] $\tw_k^k=\id_k$.
    \end{enumerate}
\end{itemize}
\end{theorem}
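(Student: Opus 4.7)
The plan is to establish each relation by exhibiting an explicit nested diffeomorphism (or, in the case of relation~(1), a circle equivalence in the sense of~\cref{defn:equiv reln on nested cobs}) between the two cobordisms representing the two sides. Since composition in $\cyl$ is given by a collared pushout (cf.~\cref{thm: pushouts give composition}), every composition of two generators sits inside a cylinder $S^1\times[0,1]$ with a prescribed embedded $1$-submanifold, and both sides of each relation produce cobordisms of this form. The task reduces to exhibiting a diffeomorphism of the cylinder, fixing both boundary circles together with their marked points, that carries the $1$-submanifold on one side to that of the other.

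I would begin with the commutation-type relations~(3),~(4), and~(5). Each of these encodes the fact that two Morse events on $C_1$ whose supports lie in disjoint arcs of the cylinder can be performed in either vertical order. In each case one may choose representative Cerf decompositions by placing the two critical points on disjoint sub-arcs of $S^1\times[0,1]$ and rescaling the nested Morse function to interchange their heights; this produces an excellent nested function on the same underlying cobordism whose Cerf decomposition is the right-hand side. The index shifts by $\pm 2$ simply record that inserting a birth to the left of a given marked point shifts that point's label by $2$, as dictated by the conventions of~\cref{defn: defn of gen}.

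Next I would treat the topological relations~(1) and~(2). For~(1), the composition $\de_{k+2}^{i}\circ \bi_{k}^{i}$ yields a cobordism whose $1$-submanifold consists of the original $k$ identity strands together with a small contractible loop formed by the birth arc and the immediately adjacent death arc; this contractible component is removed by the circle equivalence defining $\cyl$, leaving exactly the identity. Relation~(2) is the classical zig-zag/snake identity: the newly-born pair is capped off on one side only, so the remaining strand can be straightened by a planar isotopy of the cylinder relative to its boundary, just as in the standard $1$-dimensional TQFT argument of~\cite{kock_2003}.

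For the twist relations~(6) and~(7), I would check them by direct inspection of the pictures in~\cref{relations1}: the generator $\tw_k$ rotates $S^1_k$ by one position in the clockwise direction, so pre- or post-composing a birth or death by $\tw_k$ shifts its position index by one, which is exactly what~(6) and~(7) assert. Finally, relation~(8), $\tw_k^k=\id_k$, follows from the observation that $\tw_k^k$ is the cylinder $S^1\times[0,1]$ in which each strand winds once around $S^1$ during transit; applying a full Dehn twist about the core circle $S^1\times\{1/2\}$ gives a self-diffeomorphism of the cylinder that is the identity on both boundary components and unwinds these winding strands into straight identity arcs. The main obstacle in the argument is not conceptual but bookkeeping: for each relation one must carefully track which marked point on the intermediate circle maps to which after accounting for the index shifts, but once the conventions of~\cref{defn: defn of gen} are pinned down, each individual case reduces to a routine picture-chase.
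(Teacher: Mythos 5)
Your proposal is correct and follows essentially the same route as the paper: each relation is verified by exhibiting an explicit nested diffeomorphism of the striped cylinder (or, for relation~(1), a circle equivalence) between representatives of the two sides, i.e.\ by a picture chase, with (8) reduced to the fact that a Dehn twist is isotopic to the identity rel boundary. One small point the paper makes explicit that you gloss over: the edge cases $i=k$ in~(6) and $i=k-2$ in~(7) hold by \emph{definition} of the wrap-around generators $\bi_k^{k+1}$ and $\de_k^{k-1}$, which were set up in~\cref{defn: defn of gen} precisely so that those relations extend to the last index.
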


\begin{proof}
Relation (1) creates contractible circles that we impose to be the identity (\cref{fig: circle}). Relations (2) through (5) are clear by \cref{relations1} and \cref{relations2}.
Relation (6) is true by picture for $i<k$  (\cref{fig: bt commute}) and we defined $\bi^{k+1}_k$ (\cref{fig: birthedge}) so that relation (6) holds for $i=k$. Relation (7) is also true by picture for $i<k-2$ (\cref{fig: dt commute}) and we defined $\de^{k-1}_k$  (\cref{fig: deathedge}) to make relation (7) hold for $i=k-2$. Relation (8) is true because Dehn twists are diffeomorphic relative boundary to the identity (\cref{fig: dehn}).

\end{proof}

\begin{figure}[h!]
     \centering
     \begin{subfigure}[b]{0.36\textwidth}
         \centering
         \includegraphics[width=\textwidth, trim={5cm 24cm 4cm 0.8cm},clip]{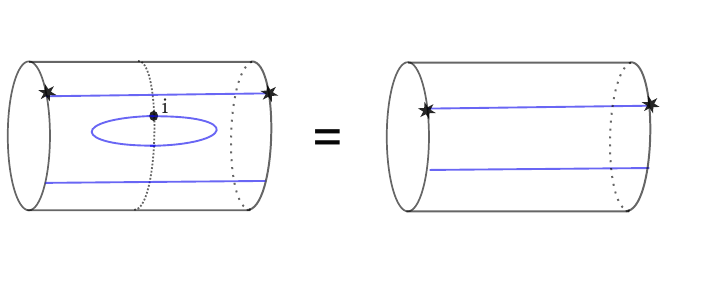}
         \caption{contractible circles}
         \label{fig: circle}
     \end{subfigure}
     \hfill
     \centering
     \begin{subfigure}[b]{0.62\textwidth}
         \centering
         \includegraphics[width=\textwidth, trim={0.5cm 19.8cm 0.5cm 6cm},clip]{Relations_Cylinder_1.pdf}
         \caption{snake}
         \label{fig: snake}
     \end{subfigure}
     \hfill
     \centering
     \begin{subfigure}[b]{0.8\textwidth}
         \centering
         \includegraphics[width=\textwidth, trim={0cm 15cm 0cm 10cm},clip]{Relations_Cylinder_1.pdf}
         \caption{no `interaction' between birth and death}
         \label{fig: bdcommute}
     \end{subfigure}
     \hfill
        \caption{Relations involving interactions between birth and deaths}
	\label{relations1}
\end{figure}

\begin{figure}[h!]
     \centering
     \begin{subfigure}[b]{0.44\textwidth}
         \centering
         \includegraphics[width=\textwidth, trim={3.7cm 24cm 3.8cm 1.2cm},clip]{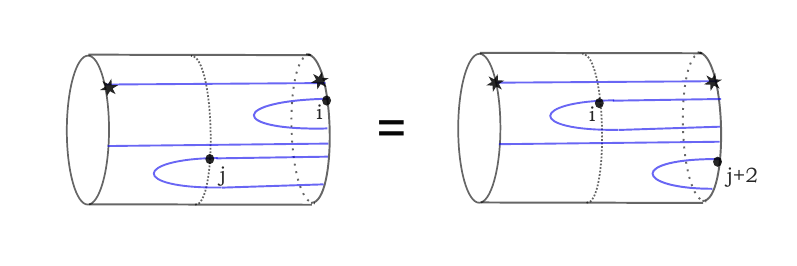}
         \caption{Births commute}
         \label{fig: births commute}
     \end{subfigure}
     \hfill
     \centering
     \begin{subfigure}[b]{0.44\textwidth}
         \centering
         \includegraphics[width=\textwidth,trim={3.7cm 19.6cm 3.8cm 5.6cm},clip]{Relations_Cylinder_2.pdf}
         \caption{Deaths commute}
         \label{fig: deaths commute}
     \end{subfigure}
     \hfill
        \caption{Relations involving birth or deaths moving past each other}
	\label{relations2}
\end{figure}

\begin{figure}[h!]
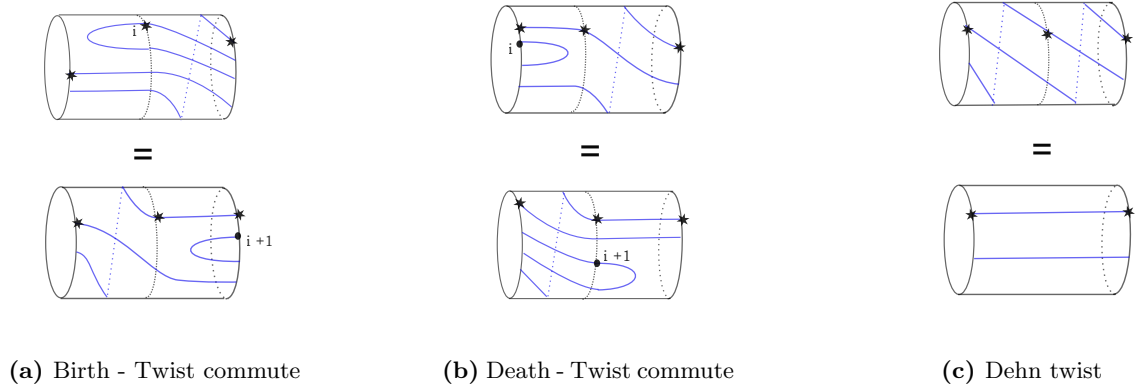

     \centering
     \begin{subfigure}[b]{0.25\textwidth}
         \centering
         \includegraphics[width=\textwidth, trim={1cm 8cm 13cm 13cm},clip]{Relations_Cylinder_2.pdf}
         \caption{Birth - Twist commute}
         \label{fig: bt commute}
     \end{subfigure}
     \hfill
     \centering
     \begin{subfigure}[b]{0.25\textwidth}
         \centering
         \includegraphics[width=0.8\textwidth, trim={8.5cm 8cm 7cm 13cm},clip]{Relations_Cylinder_2.pdf}
         \caption{ Death - Twist commute}
         \label{fig: dt commute}
     \end{subfigure}
     \hfill
     \centering
     \begin{subfigure}[b]{0.25\textwidth}
         \centering
         \includegraphics[width=\textwidth, trim={14cm 7.8cm 0cm 13.2cm},clip]{Relations_Cylinder_2.pdf}
         \caption{Dehn twist}
         \label{fig: dehn}
     \end{subfigure}
     \hfill
        \caption{Relations involving twists}
	\label{relations3}
\end{figure}

\begin{remark}
    The deaths (and births) that cannot be moved past each other are `stacked' (for example: $\de^{j-1}_{k-2} \circ \de^{j}_{k}$ for $1\leq j < k-2$).
\end{remark}

We will show that the relations in \ref{thm: minimal list of generators} are sufficient, but our argument proceeds by putting every cobordism in a normal form. That process is easier to describe by knowing the full set of pairs of births and/or deaths can be moved past each other, which can include $\bi^{k+1}_k$ and $\de^{k-1}_k$. The following corollary will give a complete list that will be used to prove our normal form. 

\begin{corollary}\label{cor: relns in cyl}
The following ``edge case" relations hold in $\cyl$:
\begin{itemize}

    \item Relations where births and deaths interact:
    \begin{enumerate}
        \item[$(0^\ast)\;$] bracelet:  $\de_2^1\circ \bi_0^0=\de_2^0\circ \bi_0^1$
        \item[$(2^\ast)\;$] untwisted snakes: $\de_{k+2}^{k}\circ \bi_k^{k+1} = \id_k$ \hspace{0.2cm} and \hspace{0.2cm} $\de^{k+1}_{k+2} \circ \bi^{k}_k = \id_k$     
        \item[$(2^{\ast\ast})$] twisted snakes: \hspace{0.3cm} $\de_{k+2}^{k+1}\circ \bi_k^0 = \tw_k^2$ \hspace{0.4cm} and \hspace{0.2cm} $\de^0_{k+2} \circ \bi^{k+1}_k=\tw_k^{k-2}$
        \item[$(3^\ast)\;$] no `interaction' between birth and death: 
        \begin{itemize}
            \item[] $\de^{i}_{k+2} \circ \bi^{k+1}_k = \bi^{k-1}_{k-2} \circ \de^i_k$ for $1\leq i\leq k-1$
            \item[] $\de^{k+1}_{k+2} \circ \bi^{i}_k = \bi_{k-2}^{i}\circ \de_{k}^{k-1}$ for $1\leq i \leq k-1$
    
        \end{itemize}
    
    \end{enumerate}
    \item Relation with births only:
    \begin{enumerate}
        \item[$(4^\ast)\;$]  $\bi_{k+2}^i\circ\bi_{k}^{k+1}=\bi_{k+2}^{k+3}\circ \bi_k^{i}$ for $1 \leq i \leq k+1$
    \end{enumerate}
    \item Relation with deaths only: for $k\geq 4$,
    \begin{enumerate}
        \item[$(5^\ast)\;$] $\de^{i}_{k-2} \circ \de_{k}^{k-1} = \de^{k-3}_{k-2} \circ \de^i_{k}$ for $1 \leq i \leq k-3$
    \end{enumerate}
\end{itemize}
\end{corollary}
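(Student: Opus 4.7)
The plan is to derive each edge-case relation in the corollary from the relations (1)--(8) of \cref{thm: minimal list of generators}, using the twist commutation relations (6) and (7) to shift the edge indices ($k+1$ for births and $k-1$ for deaths) into the standard range where the non-edge relations apply, and then simplifying with the periodicity $\tw_k^k = \id_k$ from (8).

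The key preliminary observation consists of the single-step consequences of (6) at $i=k$ and of (7) at $i=k-2$, namely
\[
\bi_k^{k+1} = \tw_{k+2}\circ \bi_k^k \circ \tw_k^{-1}, \qquad \de_k^{k-1} = \tw_{k-2}\circ \de_k^{k-2}\circ \tw_k^{-1}.
\]
These identities trade an edge-case generator for a standard-range generator conjugated by twists. Iterating (6) and (7) further gives $\tw_{k+2}^n\circ \bi_k^i = \bi_k^{i+n}\circ \tw_k^n$ (and the analogous formula for deaths), which together with (8) lets me push a twist of any degree across a birth or a death.

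To illustrate the method, consider the untwisted snake $(2^\ast)$:
\[
\de_{k+2}^{k}\circ \bi_k^{k+1} = \de_{k+2}^k\circ \tw_{k+2}\circ \bi_k^k\circ \tw_k^{-1} = \tw_k\circ \de_{k+2}^{k-1}\circ \bi_k^k\circ \tw_k^{-1},
\]
where the first step uses the shifting formula for $\bi_k^{k+1}$ and the second uses (7) at $i=k-1$. The inner composite $\de_{k+2}^{k-1}\circ \bi_k^k$ is a non-edge snake, equal to $\id_k$ by (2), so the whole expression collapses to $\id_k$. The other untwisted snake is symmetric. For the twisted snake $(2^{\ast\ast})$ the same shifting strategy applied to $\de_{k+2}^{k+1}\circ \bi_k^0$ leaves a residual factor of $\tw_k^2$ once the inner snake is absorbed, matching the stated relation. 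The bracelet $(0^\ast)$ reduces, after shifting, to an equality involving $\tw_2^2 = \id_2$.

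The remaining families $(3^\ast)$, $(4^\ast)$, $(5^\ast)$ follow the same recipe: shift each edge-case $\bi_k^{k+1}$ or $\de_k^{k-1}$ into the standard range via the two conjugation identities above, apply the corresponding non-edge relation (3), (4), or (5), and then collect and simplify twist factors using (8). The main obstacle I anticipate is \emph{bookkeeping}: twists on the source and target circles of a birth or death have different periodicities ($k$ versus $k+2$), so one must track twist exponents carefully on each side of every generator to verify that after cancellation the residue is either the identity or exactly the expected $\tw_k^2$ or $\tw_k^{k-2}$. Once the two shifting identities are in hand, each edge case reduces to a short mechanical calculation of this form.
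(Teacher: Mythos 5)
Your proposal is correct and follows essentially the same approach as the paper, which derives each edge-case relation by conjugating with twists (relations (6)–(8)) to move the indices into the standard range where (1)–(5) apply. You supply more explicit detail (the shifting identities and the worked untwisted-snake example) than the paper's brief proof, but the underlying method is identical.
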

\begin{proof}
    Each of the relations can be obtained from the relations in~\cref{thm: minimal list of generators} by conjugating by twists to move the points involved in the relation away from the 0 point. The bracelet relation is obtained from relations (6)--(8) and is illustrated in \cref{fig: bracelet}.
\end{proof}
\begin{figure}
    \centering
    \includegraphics[width=0.45\textwidth, trim={4cm 9.5cm 4cm 16.5cm}, clip]{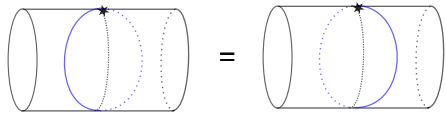}
    \caption{bracelet relation}
    \label{fig: bracelet}
\end{figure}

\begin{remark}
    It is helpful to summarize the following properties of the generators:
\begin{itemize}

    \item We can move births past each other as long as they are not `stacked.'  The `stacked' births are 
        \begin{itemize}
            \item[] $\bi^{j+1}_{k+2} \circ \bi^{j}_k $ for $0 \leq j < k+1$,
            \item[] $\bi_{k+2}^i\circ\bi_{k}^j$ if $(i,j)= (0, k+1), (k+3, 0)$ or $(k, k+1)$.
    \end{itemize} 
    
    \item We can move deaths past each other as long as they are not `stacked.' The `stacked' deaths are 
        \begin{itemize}
            \item[] $\de^{j-1}_{k-2} \circ \de^{j}_{k} $ for $1\leq j < k-2$,
            \item[] $\de^i_{k-2}\circ \de^j_k$ if $(i,j)= (0, k-1), (k-3, 0)$ or $(k-3, k-2)$.
        \end{itemize} 

    \item We can move deaths before births except when they are at the same spot (creating contractible circles that we impose to be the identity) or are adjacent. 
    
    \item When births and deaths are adjacent, they either create `snakes' that cancel the birth and the death (but may add twists) or create `bracelets' that cannot be removed.
    
    \item Births before a twist can always be moved after the twist (likewise with deaths).
    
    \item Births after a twist can always be moved before the twist (likewise with deaths).
\end{itemize}
\end{remark}

\cref{thm: minimal list of generators} and~\cref{cor: relns in cyl} give necessary relations in $\cyl$, and the remainder of this subsection is dedicated to proving that this list of relations is sufficient. 
We will show that any morphism in $\cyl$ has a unique factorization as  some (composition of) $\de_k^i$'s, followed by some (composition of) of twists or bracelets, followed by some (composition of) $\bi_k^i$'s. 

To describe this factorization, we introduce some invariants of nested cylindrical cobordisms, inspired by ~\cite[Definitions 2.11--2.14]{penneys}. For the following definitions, let $C\colon S^1_n\to S^1_m$ be a cylindrical nested cobordism, with $C_1$ the 1-dimensional submanifold of $C_2$. 

\begin{definition}\label{defn: death index}
    Let $S$ be a connected component of $C_1$ such that $|S\cap S^1_n|=2$; i.e. both endpoints of $S$ lie on the ingoing boundary $S^1_n$. Call the collection of all such $S$ the \emph{caps of $C_{1<2}$}. The boundary of $S$ divides $S^1_n$ into two intervals, one of which, $I$, is such that gluing it to $S$ forms a loop that is null-homotopic in the cylinder. Orienting $S^1_n$ clockwise, the first point on $I$ is called the starting point of the cap $S$.
    
    Define $\dind(C)$, the death index of $C$, to be the cyclically ordered sequence of starting points of the caps of $C_{1<2}$. If $C$ has no caps, define $\dind(C)=\emptyset$.
\end{definition}

\begin{definition}\label{defn: birth index}
      Let $S$ be a connected component of $C_1$ such that $|S\cap S^1_m|=2$; i.e. both endpoints of $S$ lie on the outgoing boundary $S^1_m$. Call the collection of all such $S$ the \emph{cups of $C_{1<2}$}. Analogous to the caps, orienting the $S^1_m$ clockwise defines the starting point of $S$.
    
    Define $\bind(C)$, the birth index of $C$, to be the cyclically ordered sequence of starting points of the cups of $C_{1<2}$.
    If $C$ has no cups, define $\bind(C)=\emptyset$.
\end{definition}

\begin{definition}\label{defn: total twist}
    A \emph{through string} of $C\colon S^1_n\to S^1_m$ is a connected component $S$ of $C_1$ where $S\cap S^1_n\neq \emptyset$ and $S\cap S^1_m\neq \emptyset$. The set of all through strings of $S$ is denoted $\rm{ts}(C)$ and we define $\tau(C):=|\rm{ts}(C)|$.

    Starting from the marked point on the incoming circle, number the points connected to through strings by $1,\dots, \tau(C)$; similarly number the points on the outgoing circle which are connected to through strings. Define $1\leq t_0(C)\leq \tau(C)$ so that the first through string connects $1$ and $t_0(C)$.
\end{definition}
The number $t_0(C)$ captures the amount of `twist' that the through strings undergo, ignoring the locations of the births and deaths.
If $\TT(C)=0$, then the morphism $[C]$ factors over $S^1_0$ as
\[C\colon S^1_n\to S^1_0\to S^1_0\to S^1_m.\]
In this case $C$ may have non-contractible loops or \emph{bracelets} in the center cobordism.

\begin{definition}\label{defn: bracelet}
    Define the bracelet number $\br(C)$ to be the number of non-contractible loops in $C$.
\end{definition} 

\begin{remark}
    Note that only one of $\TT(C)$ and $\br(C)$ can be non-zero.
\end{remark}

\begin{lemma}
The invariants $\bind, \dind, \TT, t_0, \br$ descend to well-defined invariants of the morphisms of $\cyl$. 
\end{lemma}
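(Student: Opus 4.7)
The plan is to verify each invariant in two stages, matching the two equivalence relations that define morphisms in $\cyl$: nested diffeomorphism equivalence of representative cobordisms (inherited from $\cyl^c \subset \Cob_{1<2}$), and circle equivalence (the additional quotient imposed in passing from $\cyl^c$ to $\cyl$).

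First I would show invariance under a nested diffeomorphism $f\colon C\to C'$ that restricts to the identity on $\partial C = \partial C' = S^1_n \amalg S^1_m$. Such an $f$ induces a bijection on connected components of the 1-submanifolds $C_1 \to C'_1$, and because $f|_{\partial}$ is the identity it preserves, for each component, the number and location of its endpoints on the two boundary circles. Consequently, caps map to caps, cups to cups, through strings to through strings, and closed loops to closed loops. Since the cyclic order on each $S^1_k$ (and the choice of marked point $0$) is fixed pointwise, the starting points from Definitions~\ref{defn: death index} and \ref{defn: birth index} coincide between $C$ and $C'$, giving invariance of $\dind$ and $\bind$. The bijective correspondence of through strings with endpoints at identical boundary positions also forces the indexings used to define $t_0$ to agree, giving invariance of $\TT$ and $t_0$. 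Finally, contractibility of a loop in $C_2 \cong S^1\times[0,1]$ is a topological property preserved by the diffeomorphism $f_2$, so non-contractible loops are sent to non-contractible loops, proving invariance of $\br$.

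Next I would handle circle equivalence, which identifies $C$ and $C'$ when they become nested diffeomorphic after discarding the contractible closed-loop components $W_1^c$. Each of $\bind$, $\dind$, $\TT$, $t_0$ is computed exclusively from components with non-empty boundary (caps, cups, through strings), so deleting contractible closed loops does not affect them. The bracelet count $\br$ is defined in Definition~\ref{defn: bracelet} as the number of non-contractible loops, which are precisely the components retained by circle equivalence; thus $\br$ is also unchanged. Combining the two steps gives a well-defined descent to $\cyl$.

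The only subtle point I anticipate is confirming that the ``starting point'' of a cap or cup is preserved, since its definition uses the local picture of which interval on the boundary cobounds a null-homotopic disk with the arc. This, however, is immediate: the arc itself maps to the corresponding arc under $f$, and the two complementary intervals of the boundary circle map to themselves under $f|_{\partial} = \mathrm{id}$, so null-homotopy of the glued loop is preserved and the same interval $I$ is singled out on either side.
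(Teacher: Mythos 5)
Your proposal is correct and follows essentially the same two-step strategy as the paper's own proof: first check invariance under orientation-preserving nested diffeomorphisms fixing the boundary pointwise, then check invariance under discarding contractible closed loops. The paper states the first step a bit more tersely ("the order in which the connected components of $C_1$ intersect the boundary... is preserved"), while you helpfully spell out the one subtle point — that the choice of interval $I$ singled out in Definitions~\ref{defn: death index} and \ref{defn: birth index} is determined by a null-homotopy condition in $C_2$, which is preserved because $f_2$ is a homeomorphism and $f|_\partial = \mathrm{id}$ — but this is a matter of exposition, not of approach.
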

\begin{proof}
    It suffices to show that the invariants are preserved under orientation-preserving diffeomorphisms that fix the boundary, and circle equivalence, since these are the relations used to define the morphisms in $\cyl$ (\cref{defn:equiv reln on nested cobs}).
    The invariants $\bind, \dind$ are determined by the order in which the connected components of $C_1$ intersect the boundary, which is preserved by orientation-preserving diffeomorphisms that fix the boundary pointwise.     

The number of bracelets $\br(C)$ is preserved since nested diffeomorphisms preserve non-contractible loops. Similarly, the number of through strings (and their sources and targets) is preserved under nested diffeomorphism, so $\tau(C)$ and $t_0(C)$ are also preserved.
None of the invariants depend on the presence of contractible loops.
\end{proof}

This lemma shows that if $C$ and $C'$ are two representatives of the same morphism in $\cyl$, then they have the same invariants. The remainder of this section is dedicated to proving the converse.
The above invariants determine specific ``types'' of nested cylindrical cobordisms (see the analogous description in~\cite[Definition 2.24]{penneys}), which we will use to give a normal form for any nested cylindrical cobordism, just as in \cite[Theorem 2.38]{penneys}. Our methods are analogous to those of \cite[Section 2.4]{penneys} in the shaded case.

\begin{definition}
    A cylindrical cobordism $C$ is of 
    
    \begin{minipage}{0.95\linewidth}
    \begin{itemize}
        \item[Type I:]  if it is a composition of only deaths (i.e. $C\colon S^1_n\to S^1_m$ where $n>m$ and $C=\de_{m+2}^k \circ \de_{m+4}^j \circ \cdots \de_n^i$); or the identity cobordism;
        \item[Type II:]  if it is a composition of bracelets; or a composition of twists ($C=\tw_n^i\colon S^1_n\to S^1_n$); or the identity cobordism;
        \item[Type III:] if it is a composition of only births (i.e. $C\colon S^1_n\to S^1_m$ where $n<m$ and $C=\bi_{m-2}^k \circ \bi_{m-4}^j \circ\cdots\bi_n^i$); or the identity cobordism.
    \end{itemize}   
    \end{minipage}
\end{definition}

Note that if $C$ is Type I then 
$\br(C)=0$ and $\bind=\varnothing$; if $C$ is Type II then $\bind=\dind=\varnothing$; if $C$ is Type III then 
$\br(C)=0$ and $\dind=\varnothing$. From the definition of types, it is clear that the types are closed under compositions.

\begin{lemma}\label{lem: types uniquely det by invariants}
If cylindrical cobordisms $C$ and $C'$ are of Type I with $\dind(C)=\dind(C')$ and $\tau(C) = \tau(C')$, then $C$ and $C'$ are connected by a finite sequence of relations from~\cref{thm: minimal list of generators}, and hence $[C]=[C']$ in $\cyl$.
The analogous statement holds for Type III using the birth index. 
\end{lemma}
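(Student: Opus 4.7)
The plan is to prove the Type I statement by strong induction on the number $s$ of deaths, using the relations from Theorem~\ref{thm: minimal list of generators} together with the derived relations from Corollary~\ref{cor: relns in cyl}; the Type III case will follow by a formally dual argument using births and relations (4), $(4^\ast)$ in place of (5), $(5^\ast)$. The base case $s=0$ is immediate: both cobordisms are the identity.

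Before inducting, I would establish that $(\dind(C), \tau(C))$ uniquely determines a non-crossing matching $M$ of the $s$ pairs of cap endpoints on $S^1_n$, with the remaining $\tau(C)$ points being through-string endpoints. The crucial input is that no through-string endpoint can lie on the null-homotopic side of any cap: otherwise the through string would begin inside a disk in the cylinder bounded by the cap and a boundary arc, and being transverse to the disk boundary without crossing the cap would have no way to exit, contradicting that it reaches the outgoing boundary. With this constraint, a stack-based parenthesis-matching argument beginning at a position where the stack is empty (for example, a through-string endpoint, or a position between two non-nested caps) pairs each starting point in $\dind$ uniquely with its partner. Hence $\dind(C)=\dind(C')$ and $\tau(C)=\tau(C')$ imply $M_C=M_{C'}$.

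The key technical step is the following sub-lemma: given any decomposition $C = \de^{c_s}_{m+2}\circ\cdots\circ \de^{c_1}_n$ realizing $M$, and any innermost cap $A\in M$ appearing as the factor $\de^{c_j}$, the relations (5) and $(5^\ast)$ can be used to rewrite $C$ so that the death realizing $A$ becomes the bottommost factor: $C=\widetilde{C}\circ \de^{c_A}$. To prove this I would move $\de^{c_j}$ downward one factor at a time. At the moment of swapping past the factor immediately below, that lower cap is innermost in the then-current sub-matching (by validity of the decomposition) and $A$ is innermost in $M$ hence in every sub-matching containing it; two distinct innermost caps in a non-crossing matching are necessarily disjoint (neither encloses the other). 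The main obstacle, and the most delicate part of the write-up, is verifying that any two disjoint adjacent factors $\de^{c'}_{k-2}\circ \de^{c}_{k}$ can always be swapped using (5) or $(5^\ast)$. I would do this by case analysis on whether either cap involves the wrap-around marked point $0$: using the renumbering rules of Definition~\ref{defn: defn of gen}, one checks that when neither cap is the wrap-around cap $(k-1,0)$ the corresponding indices $c,c'$ satisfy $c'<c-1$ (after exchanging the roles of top and bottom if needed), so relation (5) applies and the shifted indices exactly produce the swapped order; when one cap is $(k-1,0)$ the companion cap has index $c\in\{1,\dots,k-3\}$ precisely in the disjoint (non-nested) range, and $(5^\ast)$ produces the swap.

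Applying the sub-lemma to both $C$ and $C'$ with the same choice of innermost cap $A$ yields $C=\widetilde{C}\circ \de^{c_A}$ and $C'=\widetilde{C'}\circ \de^{c_A}$ in $\cyl$, where $\widetilde{C}$ and $\widetilde{C'}$ are Type I cobordisms with $s-1$ deaths on the smaller circle obtained by capping $A$, both realizing $M\setminus\{A\}$ and hence sharing the same death index and $\tau$. The inductive hypothesis gives $\widetilde{C}=\widetilde{C'}$ in $\cyl$, whence $C=C'$. The Type III case runs formally in parallel: one selects an outermost cup (one not enclosed by the null-homotopic disk of any other cup), uses relations (4) and $(4^\ast)$ to move the corresponding birth to the top of the decomposition, and inducts on the remaining $s-1$ births.
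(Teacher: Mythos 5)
Your proof is correct and follows essentially the same approach as the paper's: both establish that $\dind(C)$ together with $\tau(C)$ determines a unique non-crossing matching of cap endpoints (the paper leaves this as ``a straightforward combinatorial argument'' and asserts end points are determined; your stack-based, through-strings-cannot-be-enclosed argument fills this in), and both then reduce the problem to reordering the corresponding deaths via the death--death relations, for which your inductive sub-lemma of sliding an innermost cap to the bottom is a valid implementation. One small note: where the paper's proof cites ``relations (6) and $(6^\ast)$'' this is a misprint for (5) and $(5^\ast)$, which are precisely the relations you use.
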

\begin{proof}
    We will prove the lemma assuming $C$ is of Type I; if $C$ is of Type III, a ``dual'' argument can be used. First observe that we must have $C\colon S^1_n\to S^1_m$ for $m=\tau(C)$ and $n=\tau(C)+2\lvert{\dind(C)\rvert}$. It thus suffices to show that the combinatorics of how the death arcs are stacked is uniquely determined by $\dind(C)$, at which point any two representatives are connected via relations (6) and (6*) in~\cref{thm: minimal list of generators}. The claim follows by a straightforward combinatorial argument that $\dind(C)$ determines not only the data of the starting points of the death arcs, but also the end points.
\end{proof}

\begin{lemma}\label{lem: type II from invariants}
  If cylindrical cobordisms $C, C'$ are of Type II with $\TT(C)=\TT(C')$ and $\br(C)=\br(C')$, then $C$ and $C'$ are connected by a finite sequence of relations from~\cref{thm: minimal list of generators}, and hence $[C]=[C']$ in $\cyl$.
\end{lemma}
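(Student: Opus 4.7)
The plan is a two-case analysis exploiting the remark following \cref{defn: bracelet} that at most one of $\TT(C)$ and $\br(C)$ can be nonzero. Accordingly, I would split into the cases $\TT = 0$ (pure bracelets) and $\br = 0$ (pure twists), handle each using the geometric content appropriate to it, and in each case show that $C$ and $C'$ are connected through the relations of \cref{thm: minimal list of generators}.

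In the case $\TT(C) = \TT(C') = 0$, both $C$ and $C'$ are endomorphisms of $S^1_0$ whose $1$-dimensional submanifold consists of $k := \br(C) = \br(C')$ disjoint non-contractible loops in the cylinder $S^1 \times [0,1]$; since Type II forces $\bind = \dind = \varnothing$, there are no cups or caps to worry about. I would invoke the classical isotopy classification of $1$-submanifolds of an annulus: any essential simple closed curve in $S^1 \times [0,1]$ is isotopic to a core circle $S^1 \times \{t\}$, and any collection of $k$ disjoint essential loops is isotopic to $k$ parallel core circles through an ambient isotopy fixing the (empty) boundary pointwise. This produces a nested diffeomorphism between $C$ and $C'$, so $[C] = [C']$ in $\cyl$.

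In the case $\br(C) = \br(C') = 0$ and $\TT(C) = \TT(C') = n$, both cobordisms are iterated twists $C = \tw_n^i$ and $C' = \tw_n^{i'}$, with the identity viewed as $\tw_n^0$. Relation (8) of \cref{thm: minimal list of generators} gives $\tw_n^n = \id_n$, so $\tw_n^i$ depends only on $i \pmod n$. The residue modulo $n$ is detected by $t_0$: the first through string of $\tw_n^i$ connects incoming point $1$ to outgoing point $1 + i \pmod n$, so $t_0(C) = 1 + i \pmod n$. Under the implicit agreement of $t_0$ coming with the Type II characterization (or as an additional hypothesis), we get $i \equiv i' \pmod n$, and repeated application of relation (8) yields a sequence of relations identifying $C$ with $C'$. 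The main obstacle is the isotopy classification in the first case, which rests on a standard but essential result about simple closed curves in the annulus; the twist case is then essentially bookkeeping with relation (8) once one notes that the rotation amount is rigidly encoded by $t_0$.
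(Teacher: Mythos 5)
Your treatment of the twist case ($\br = 0$) matches the paper's argument, and you correctly flag that an additional hypothesis $t_0(C) = t_0(C')$ is implicitly needed; the paper's own proof indeed inserts this condition mid-argument even though it is absent from the lemma statement, so your observation is accurate.

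The problem lies in the bracelet case ($\TT = 0$). There you invoke the isotopy classification of disjoint essential simple closed curves in the annulus to produce a nested diffeomorphism between $C$ and $C'$, and then conclude $[C] = [C']$ in $\cyl$. But that establishes only the weaker half of the lemma's conclusion. The lemma's primary claim is that $C$ and $C'$ are \emph{connected by a finite sequence of relations from} \cref{thm: minimal list of generators}, with $[C]=[C']$ as a consequence; and the stronger claim is precisely the content that cannot be skipped, because the lemma feeds into \cref{thm: unique decomp} and ultimately into \cref{cor:cyl gens and relns}, whose point is to prove that the listed relations generate \emph{all} identifications of composites of generators. Appealing to the geometric fact that cobordisms with the same invariants are diffeomorphic rel boundary short-circuits the argument and, if used at this step, renders the completeness argument circular. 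The paper instead proves the bracelet case by a combinatorial rewriting argument: take the given word in generators, repeatedly move each death operator leftward past births and twists using relations (1), (3), (3$^\ast$), (4), (4$^\ast$), observe that with no through strings and no cups or caps each death must eventually stall immediately after a birth in a bracelet configuration, and normalize each such pair to $\de_2^1\circ\bi_0^0$ via the bracelet relation, arriving at the normal form $(\de_2^1\circ\bi_0^0)^{\br(C)}$. This explicitly exhibits the required chain of relations. To repair your argument you would need to replace the appeal to the annulus isotopy classification with a normalization of this kind.
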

\begin{proof}
        Assume $C$ is of Type II. If $\br(C)\neq 0$, then we want to show $C$ is the composition of $\br(C)$-many bracelet cobordisms: $\de_{2}^{1}\circ \bi_0^0 \colon S^1_0\to S^1_0$. 

        Whenever a $\de_k^i$ occurs in $C$, move it as far to the beginning of the cobordism (to the left in the picture; to the right in the factorization) as possible. Because there is a non-contractible loop, there will be at least one $\bi^i$ to the left of the picture and at least one $\de^i$ to the right of the picture. As we move deaths earlier, the following may happen: the death will cancel with a birth (relations (1), (3), (3*)); or will be unable to move past a birth (as in the bracelet relation (2) or the contractible circle relation (1)); or will move past a birth or twist (relations (4), (4*)). This could lead to the death at the beginning of the cobordism in general, but in this case, $\dind(C)=\emptyset$, so only the first two options are possible. This gives a representation of $[C]$ where the deaths are immediately after a birth that they cannot move past, which only happens in the bracelet cobordism $\de_{2}^{1}\circ \bi_0^0$ or $\de_{2}^{0}\circ \bi_0^1$ and the latter is equal to the former by the bracelet relation $(2)$.

        If $\br(C)=0$, then $C$ is a either the identity or the composition of twists. If $C'$ is also of Type II with $t_0(C)=t_0(C')$, then $C$ and $C'$ are related by relation (8) in~\cref{thm: minimal list of generators}. Note that one can take $\tw_k^{t_0(C)}$ as the representative of $[C]$.
\end{proof}

\begin{theorem}\label{thm: unique decomp}
    Every morphism $[C]\in \cyl$ has a decomposition as
    \[ C= C_{III}\circ C_{II}\circ C_{I},\]
    which is unique up to the relations from~\cref{thm: minimal list of generators}.
\end{theorem}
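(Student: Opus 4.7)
The proof splits into existence and uniqueness. For existence, I would take any factorization of $[C]$ into the generators of \cref{thm: gens of cyl} and normalize it using the relations of \cref{thm: minimal list of generators} and \cref{cor: relns in cyl}. For uniqueness, the topological invariants $\dind, \bind, \tau, t_0, \br$ together with \cref{lem: types uniquely det by invariants} and \cref{lem: type II from invariants} will pin down each piece $C_I, C_{II}, C_{III}$ up to the listed relations.

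\textbf{Existence algorithm.} Starting from an arbitrary generator word for $[C]$, I would proceed in three stages. First, using relations (6), (7) and their edge-case variants, slide every twist past any adjacent birth or death (accepting an index shift) until all twists occupy a contiguous middle region; this preserves the total number of births and deaths. Second, push deaths to the start of the composition and births to the end: whenever a birth is immediately followed by a death, apply exactly one of relations (1), (2), $(2^\ast)$, $(2^{\ast\ast})$, (3), $(3^\ast)$, or $(0^\ast)$, which either cancels the pair, turns it into a power of twists, forms a bracelet, or commutes them with an index shift. New twists feed back into Stage 1, and new bracelets are absorbed into the middle as stable Type II atoms. Third, sort the outer blocks via (4), $(4^\ast)$, (5), $(5^\ast)$ into canonical Type I (deaths) and Type III (births) configurations.

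\textbf{Uniqueness.} Suppose $[C_{III}\circ C_{II}\circ C_I] = [C'_{III}\circ C'_{II}\circ C'_I]$. Since caps arise only within $C_I$ and cups only within $C_{III}$, one has $\dind(C_I) = \dind(C) = \dind(C'_I)$ and $\bind(C_{III}) = \bind(C) = \bind(C'_{III})$. The through-string count $\tau$, the value $t_0$, and the bracelet number $\br$ of the middle piece are likewise forced by those of $C$ (since the intermediate circles are determined either by the number of through strings or, in the bracelet case, by being $S^1_0$). \cref{lem: types uniquely det by invariants} then identifies $C_I$ with $C'_I$, and $C_{III}$ with $C'_{III}$, via rewrites using only relations from \cref{thm: minimal list of generators}, and \cref{lem: type II from invariants} does the same for $C_{II}$ and $C'_{II}$. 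Composing these rewrites equates the two full decompositions through the listed relations.

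\textbf{Main obstacle.} The principal difficulty is proving that the Stage 2 rewriting terminates, since relation $(0^\ast)$ does not decrease the total generator count. I would handle this by declaring each bracelet, once produced, to be a stable Type II atom that is immediately relocated to the middle region and excluded from further Stage 2 manipulations. Under this convention, a lexicographic complexity measure such as (number of birth/death generators not yet in their destined block, number of birth-before-death inversions, position of the leftmost deferred death) strictly decreases at each rewriting step, guaranteeing that the algorithm reaches the desired normal form in finitely many moves.
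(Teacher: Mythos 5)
Your proposal matches the paper's argument in essence: both existence proofs take a generator word, push deaths to the beginning and births to the end via the listed relations (cancelling, commuting, or freezing into bracelets as appropriate), leaving twists and bracelets in the middle; and both uniqueness arguments reduce to the invariants $\dind$, $\bind$, $\tau$, $t_0$, $\br$ via \cref{lem: types uniquely det by invariants} and \cref{lem: type II from invariants}. The one place you go beyond the paper is the explicit termination argument for the rewriting procedure (freezing bracelets as inert atoms and exhibiting a lexicographic measure); the paper's proof asserts the normal form is reached but does not verify termination, so this is a genuine, if small, tightening of the argument rather than a different approach.
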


\begin{proof}
    By~\cref{thm: gens of cyl}, any cobordism $C:S^1_m\to S^1_n$ in $\cyl$ can be written as a composition of the generators $\tw_k, \bi_k^i, \de_k^i, \id_k$. 
    
    Whenever a $\de_k^i$ occurs in this composition, move it as far to the beginning of the cobordism (to the left in the picture; to the right in the factorization) as possible. Assuming $\de_k^i$ is not already adjacent to a death, the following may happen: the death will cancel with a birth (relations (1), (3), (3*)); or move past a birth or twist (relations (4), (4*)); or will be unable to move past a birth (as in the bracelet relation (2) or the contractible circle relation (1)). This gives a representation of $[C]$ where all the deaths occur to the left of any other generators, except the deaths involved in bracelets.

    An analogous argument moves all the births occurring in the decomposition of $C$ to the end of the cobordism (to the right in the picture; to the left in the factorization), besides the births involved in bracelets.
    This leaves compositions of twists and compositions of bracelets in the center of the cobordism.  

    Let $C_I$ be the unique cylinder cobordism of Type I, where $\dind{C_I}=\dind{C}$; this is unique up to the relations of \cref{thm: minimal list of generators} by~\cref{lem: types uniquely det by invariants}; similarly for $C_{III}$. If $\br(C)\neq 0$, then $C_{II}$ is uniquely determined: since we have moved all births and deaths past each other, we must have $C_{II}=(\de_{2}^{1}\circ \bi_0^0)^{\br(C)}$. If $\TT(C)\neq 0$, then to determine $C_{II}$ as in~\cref{lem: type II from invariants}, we just need to specify $t_0(C_{II})$. This number is uniquely determined by the equation $t_0(C) \equiv_{\tau(C)} t_0(C_I) + t_0(C_{II}) + t_0(C_{III})$.
    We then have $C=C_{III}\circ C_{II}\circ C_{I}$.
\end{proof}

\begin{corollary}\label{cor:cyl gens and relns}
        The list of relations from~\cref{thm: minimal list of generators}  
        are a complete generating set of relations for morphisms in $\cyl$. 
\end{corollary}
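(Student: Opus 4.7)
The plan is to argue that the relations suffice by comparing normal forms. Suppose $C$ and $C'$ are two finite compositions of the generators $\id_k, \tw_k, \bi_k^i, \de_k^i$ that represent the same morphism in $\cyl$. I want to exhibit a finite sequence of the relations in \cref{thm: minimal list of generators} (together with their consequences from \cref{cor: relns in cyl}) transforming $C$ into $C'$.

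First, I would apply \cref{thm: unique decomp} to each of $C$ and $C'$. This gives presentations
\[ C = C_{III}\circ C_{II}\circ C_I, \qquad C' = C'_{III}\circ C'_{II}\circ C'_I, \]
obtained purely by rewriting using the listed relations. It then suffices to show $C_I \sim C'_I$, $C_{II}\sim C'_{II}$, $C_{III}\sim C'_{III}$ via the listed relations.

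The key input is that the invariants $\bind, \dind, \tau, t_0, \br$ are well-defined on morphisms of $\cyl$, and by construction the normal-form decomposition is controlled by exactly these invariants (the death part $C_I$ by $\dind$ and $\tau$, the birth part $C_{III}$ by $\bind$ and $\tau$, and the middle $C_{II}$ by $\br$ and $t_0$, with the combinatorial consistency $t_0(C)\equiv_{\tau(C)} t_0(C_I)+t_0(C_{II})+t_0(C_{III})$). Since $[C]=[C']$ in $\cyl$, the invariants agree: $\dind(C)=\dind(C')$, $\bind(C)=\bind(C')$, $\tau(C)=\tau(C')$, $\br(C)=\br(C')$, and $t_0(C)=t_0(C')$. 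Hence $C_I$ and $C'_I$ are both Type I cobordisms with matching $(\dind,\tau)$, so by \cref{lem: types uniquely det by invariants} they are connected via the listed relations; dually for $C_{III}, C'_{III}$ by applying the same lemma in the Type III case; and for $C_{II}, C'_{II}$ by \cref{lem: type II from invariants}.

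The main obstacle I anticipate is the middle piece. One must check that \emph{after} moving all deaths to the front and all births to the back as in the proof of \cref{thm: unique decomp}, the residual $C_{II}$ is genuinely of Type II in both cases; that is, no unremovable birth--death adjacency remains other than bracelets or twists. This is where the interaction between the bracelet relation $(2)$, the snake and twisted-snake relations $(2^\ast)$, $(2^{\ast\ast})$, and the contractible-circle relation $(1)$ must be used carefully so that any birth still trapped to the right of a death contributes only a bracelet or is absorbed into the twist. Once that is verified, composing the three chains of rewrites produces an explicit finite sequence of relations carrying $C$ to $C'$, proving that the relations of \cref{thm: minimal list of generators} are complete.
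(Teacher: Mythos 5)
Your proof is correct and follows essentially the same route the paper intends: the corollary is a direct consequence of \cref{thm: unique decomp} together with the well-definedness of the invariants $\bind, \dind, \tau, t_0, \br$ and \cref{lem: types uniquely det by invariants} and \cref{lem: type II from invariants}, and you have simply unpacked this implication explicitly. The concern you flag about the middle piece is legitimate but is already handled in the body of the proof of \cref{thm: unique decomp} (where it is argued that after moving deaths left and births right, what remains in the center is a composition of twists or bracelets), so no gap remains.
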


\section{$\cyl$-objects and Temperley-Lieb algebras}\label{section6:TQFTs}

We can now leverage our understanding of the generators and relations in $\cyl$ to identify properties of its representations. A \textit{representation} of $\cyl$, or \textit{$\cyl$-object}, is a functor $\cyl\to \cat C$ where $\cat C$ is any category. As a corollary of \cref{cor:cyl gens and relns}, we have the following classification of $\cyl$-objects.

\begin{corollary}\label{cor:cyl rep data}
    A functor $\cyl\to \cat C$ is specified by the following data:\begin{itemize}
        \item for each $n\geq 0$, an object $c_n\in \cat C$,
        \item for each $n\geq 0$, an isomorphism $t_n\colon c_n\to c_n$,
        \item for each $n\geq 2$, maps $d_n^i\colon c_n\to c_{n-2}$ for $0\leq i\leq n-1$,
        \item for each $n\geq 0$, maps $s_n^j\colon c_n \to c_{n+2}$ for $0\leq j \leq n+1$,
    \end{itemize}
    subject to the following relations:
    \begin{enumerate}
        \item[(i)] $d_{k-2}^i\circ d_{k}^j = d_{k-2}^{j-2}\circ d_{k}^{i} $ for $ i < j - 1 $,
        \item[(ii)] $s_{k+2}^i\circ s_{k}^j=s_{k+2}^{j+2}\circ 
        s_k^{i}$ if $i \leq j$,
        \item[(iii)] $d_{n+2}^j\circ s_n^i = \left\{\begin{array}{cc}
             \id & i=j-1,j,j+1; \\
             s_{n-2}^{j-2} \circ d^i_n & i<j-1;\\
             s_{n-2}^j \circ d_n^{i-2} & i> j+1,
        \end{array}\right.$
        \item[(iv)] $t_n^n = \id$,
        \item[(v)] $t_{n+2}\circ s_n^j = s_n^{j+1}\circ t_n$,
        \item[(vi)] $t_n\circ d_{n+2}^i = d_{n+2}^{i+1}\circ t_{n+2}$,
    \end{enumerate}
\end{corollary}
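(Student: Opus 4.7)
The plan is to deduce this result directly from the generators-and-relations presentation of $\cyl$ established in \cref{cor:cyl gens and relns}. By that corollary, $\cyl$ is presented (as a category) by the objects $S^1_n$ for $n \geq 0$ together with the generating morphisms $\id_n$, $\tw_n$, $\bi_n^j$, $\de_n^i$ (with the index ranges from \cref{defn: defn of gen}), subject to the relations (1)--(8) of \cref{thm: minimal list of generators}. By the universal property of such a presentation, specifying a functor $F\colon \cyl \to \cat{C}$ is equivalent to choosing an object of $\cat C$ for each $S^1_n$ and a morphism of $\cat C$ for each generating morphism, in such a way that the images of the generating relations hold. Setting $c_n := F(S^1_n)$, $t_n := F(\tw_n)$, $d_n^i := F(\de_n^i)$, and $s_n^j := F(\bi_n^j)$ then produces exactly the data listed in the corollary, and conversely any such data extends uniquely to a functor.

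The next step is to verify that the six relations (i)--(vi) in the corollary correspond exactly to the images under $F$ of the relations (1)--(8) in \cref{thm: minimal list of generators}. The twist relations are easiest to match: (iv) is the image of relation (8), which also ensures $t_n$ is invertible with inverse $t_n^{n-1}$ (and $t_0$ is tautologically the identity), and (v), (vi) are the images of relations (6), (7). The birth-only relation (ii) is the image of relation (4), and the death-only relation (i) is the image of relation (5).

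The main bookkeeping step is translating relation (iii) of the corollary into the cases given by relations (1), (2), and (3) of \cref{thm: minimal list of generators}. The theorem writes $\de_{k+2}^i \circ \bi_k^j$ with $i$ the death index and $j$ the birth index, while the corollary writes $d_{n+2}^j \circ s_n^i$ with $j$ the death index and $i$ the birth index; after renaming indices accordingly, the three cases $i \in \{j-1, j, j+1\}$ of relation (iii) correspond to the two snake relations (2) and the contractible-circle relation (1), while the two inequality cases $i < j - 1$ and $i > j + 1$ correspond to the two formulas of the no-interaction relation (3). This is the only place where index shifts must be tracked carefully, and the anticipated obstacle is just ensuring that the shifts in the RHS match the chosen convention for which index sits on the birth and which on the death.

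With this matching in hand, both directions follow: every functor $\cyl \to \cat C$ produces data satisfying (i)--(vi), since it sends every relation of Theorem \ref{thm: minimal list of generators} to a relation in $\cat C$; and conversely, given any data $(c_n, t_n, d_n^i, s_n^j)$ satisfying (i)--(vi), the universal property provides a unique functor sending each generator to the corresponding piece of data, because (i)--(vi) encode precisely the complete set of relations provided by \cref{cor:cyl gens and relns}.
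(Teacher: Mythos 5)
Your strategy matches the paper's exactly: the paper offers no argument beyond ``as a corollary of'' the generators-and-relations presentation from \cref{cor:cyl gens and relns}, and you correctly spell out the universal-property argument underlying this. The translations of relations (i), (ii), (iv), (v), (vi) from the theorem's (5), (4), (8), (6), (7) are direct transcriptions under the dictionary $d_n^i = F(\de_n^i)$, $s_n^j = F(\bi_n^j)$, $t_n = F(\tw_n)$, and your remark about invertibility of $t_n$ via $t_n^{n-1}$ is fine.

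However, you defer the bookkeeping for relation (iii) as ``the only place where index shifts must be tracked carefully,'' and this is precisely the step that does not check out if you actually carry it out. With the dictionary above, the left-hand side of (iii) is $d_{n+2}^j\circ s_n^i = F(\de_{n+2}^j\circ\bi_n^i)$, where the \emph{death} index is $j$ and the \emph{birth} index is $i$. Applying relation (3) of \cref{thm: minimal list of generators} in this notation: for $i>j+1$ (the theorem's case ``death index $<$ birth index $-1$'') one gets $F(\bi_{n-2}^{i-2}\circ\de_n^j) = s_{n-2}^{i-2}\circ d_n^j$, and for $i<j-1$ one gets $s_{n-2}^{i}\circ d_n^{j-2}$. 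These do \emph{not} match the formulas stated in (iii), namely $s_{n-2}^j\circ d_n^{i-2}$ and $s_{n-2}^{j-2}\circ d_n^i$: the superscripts on $s$ and $d$ have been transposed. Conversely, the stated right-hand sides do translate verbatim from the theorem's (3) if the left-hand side of (iii) is read as $d_{n+2}^i\circ s_n^j$, i.e., with the theorem's convention of placing the death index first and calling it $i$. So as written the corollary has a superscript transcription error somewhere in relation (iii); ``ensuring the shifts match'' requires performing the check, not merely anticipating it, and a complete proof should do so (at which point it would flag the inconsistency). The identity cases $i\in\{j-1,j,j+1\}$ of (iii) are symmetric in $i$ and $j$ and translate from relations (1)--(2) under either reading.
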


\begin{remark}
    In light of~\cref{rmk:Cob 2cat}, one could consider a version of $\cyl$-objects that arise from $2$-functors. In this extended setting, if $f\colon X\to X$ is the $1$-morphism in $\cat C$ assigned to the closed interval with one marked point, then $c_n$ is the trace of the $n$-fold composition of $f$. It would be interesting to explicate exactly when $f$ describes a $\cyl$-object (e.g. when $\cat C$ is the Morita $2$-category of algebras, bimodules, and intertwiners) and further understand the connection between $2$-categorical $\cyl$-objects and categorical traces (see \cite{PontoShulman}). We thank an anonymous referee for this suggestion.
\end{remark}

Note that the relations (i)--(iii) are very similar to the data of a simplicial object in $\cat C$, while relations (iv)--(vi) are similar to the additional structure of a cyclic object. In this section, we will further unpack this structure and discuss how $\cyl$-representations relate to other ideas in the literature, such as the affine Temperley-Lieb algebras of Graham--Lehrer \cite{Graham-Lehrer:98} as well as the annular Temperley-Lieb algebras of Jones \cite{jones:01, Jones} and its connection to Connes' cyclic category \cite{penneys}.

Our work of finding the generators and relations for $\cyl$ can be seen as an extension of \cite{erdmann/green:98}, where Erdmann--Green give generators and relations for affine Temperley-Lieb algebras studied in \cite{fan/green:97, green:98}, which are related to~\cite{Graham-Lehrer:98}. 
Our work is also similar to some of the results of \cite{penneys}, wherein Penneys obtains explicit generators and relations for \textit{annular Temperley-Lieb algebras}, as described by Jones \cite{jones:01, Jones}. 
The defining diagrams for annular Temperley-Lieb algebras come with a ``shading,'' as described in \cref{sec:annular TLA}, whereas affine Temperley-Lieb algebras do not require the analogous diagrams to be shaded.
Our work can be viewed as an ``unshaded'' analog of Penneys' results.

For the upcoming discussion, it will be helpful to note some properties of the category $\cyl$. Recall from \cref{defn:cyla and cyl} that $\cyl^a$ is the cylindrical nested cobordism category where we also keep track of the number of contractible closed loops (but not how these loops are embedded in the cylinder).

\begin{remark}\label{rmk:gens and relns for cyla}
    Essentially the same proofs from \cref{section3:cyl} work to describe generators and relations for $\cyl^a$. In particular, $\Hom(\cyl^a)$ has the same generators as $\cyl$ (with no contractible circles) and additional generators $(\id_k, 1)$, $k\geq 0$, which has one contractible circle. The generators are subject to the relations analogous to (2)--(8) in \cref{thm: minimal list of generators}, and relation (1) is replaced with $(\de^i_{k+2}, 0)\circ (\bi^i_k, 0) = (\id_k, 1)$. Consequently, a functor $\cyl^a\to \cat C$ is specified by the same data as a $\cyl$-object, along with an endomorphism $\chi_k\colon c_k\to c_k$ for $k\geq 0$ which is the image of $(\id_k, 1)$. The only relation that changes is that $d^j_{n+2}\circ s^j_n = \chi_k$; the map $\chi_0$ can be viewed as an Euler characteristic (trace of the identity map), as we will discuss in \cref{sec:bar construction}.
\end{remark}

The following observation will also be helpful for some of our comparisons.

\begin{definition}\label{defn:cyl0 and cyl1}
     We can write $\cyl = \cyl_0\amalg \cyl_1$, where $\cyl_0$ is the full subcategory on even-parity objects, $\{S^1_{2k}\}_{k\geq 0}$, and $\cyl_1$ is the full subcategory on odd-parity objects, $\{S^1_{2k+1}\}_{k\geq 0}$. Similarly, $\cyl^a = \cyl^a_0\amalg \cyl^a_1$.
\end{definition}

We show that the cyclic category $\Lambda$ includes into $\cyl_0$, so every $\cyl_0$-object has an underlying cyclic object. In fact, a $\cyl_0$-object can be seen as a cyclic object where the cyclic action ``has square roots,'' and we make this idea precise by introducing a category $\sqrt{\Lambda}$ (\cref{defn:sqrt Lambda}) and an inclusion $\Lambda\to \sqrt{\Lambda}$. The category $\sqrt{\Lambda}$ is similar in spirit to the $C_2$-twisted cyclic category used to define $C_2$-twisted topological Hochschild homology \cite{BHM:93, ABGHLM:18}. Just as edgewise subdivision turns cyclic objects into $C_2$-twisted ones, we introduce a \textit{doubling construction} that turns a cyclic object into a $\sqrt{\Lambda}$-object. Inspired by the cyclic bar construction, we define the \textit{$\cyl$-bar construction} (\cref{defn:bar construction}), which takes as input a self-dual object in a strict monoidal category $\cat C$ and produces a $\cyl$-object in $\cat C$.

\subsection{Connection to affine Temperley-Lieb algebras}\label{section5:TLalgebras}

In \cite{Graham-Lehrer:98}, Graham and Lehrer introduce the \textit{affine Temperley-Lieb category}, denoted $\cat T^a$. 
A functor $W\colon \cat T^a\to {\rm Mod}_R$ (the category of $R$-modules for some ring $R$) gives rise to representations of affine Temperley-Lieb algebras, called \textit{cell modules} (or \textit{Weyl modules}). In this section, we briefly review these definitions and discuss how the category $\cat T^a$ is related to $\cyl$.

To define the affine Temperley-Lieb category, we first define a category of diagrams $\cat D^a$. The objects of $\cat D^a$ are non-negative integers and morphisms involve \textit{affine diagrams}. An affine diagram $n\to m$ can be visualized as two infinite horizontal rows of nodes on the grid $\mathbb{Z}\times\{0,1\}\subseteq \RR\times \RR$, along with edges between them satisfying the following:
\begin{itemize}
    \item every node is the endpoint of exactly one edge,
    \item no edges intersect,
    \item every edge lies within  $\RR\times[0,1]$,
    \item the diagram is invariant under the shift $(t,k)\mapsto (t+n, k+m)$,
    \item every edge either connects two points or does not meet any node, in which case it is an infinite horizontal line. There are only finitely many (possibly zero) edges of this second type.
\end{itemize}
Composition is defined by stacking of diagrams. This category comes with a natural involution, giving by flipping the diagram. These definitions (and the ones that follow) can be made more rigorous using constructions on finite totally ordered sets, see \cite[\S 1]{Graham-Lehrer:98} for details. The following figure shows an example of a composition of affine diagrams, $3\xrightarrow{\alpha} 3\xrightarrow{\beta} 1$.
\[
\begin{tikzpicture}[scale=0.75]
    \node at (-4,0.5) {$\dots$};
    \fill (-3,0.5) circle (3pt);
    \fill (-2,0.5) circle (3pt);
    \fill (-1,0.5) circle (3pt);
    \fill (0,0.5) circle (3pt);
    \fill (1,0.5) circle (3pt);
    \fill (2,0.5) circle (3pt);
    \fill (3,0.5) circle (3pt);
    \node at (4,0.5) {$\dots$};
    \node at (-4,1.5) {$\dots$};
    \fill (-3,1.5) circle (3pt);
    \fill (-2,1.5) circle (3pt);
    \fill (-1,1.5) circle (3pt);
    \fill (0,1.5) circle (3pt);
    \fill (1,1.5) circle (3pt);
    \fill (2,1.5) circle (3pt);
    \fill (3,1.5) circle (3pt);
    \node at (4,1.5) {$\dots$};
    \draw[dashed] (1.5,1.75) -- (1.5,0.25);
    \draw[dashed] (-1.5,1.75) -- (-1.5,0.25);
    \draw[thick] (1,1.5) -- (0,0.5);
    \draw[thick] (-2,1.5) -- (-3,0.5);
    \draw[thick] (3.75,1.5) -- (3,0.5);
    \draw[thick] (-1,0.5) arc (0:180:0.5);
    \draw[thick] (2,0.5) arc (0:180:0.5);
    \draw[thick] (-1,1.5) arc (180:360:0.5);
    \draw[thick] (2,1.5) arc (180:360:0.5);

    \node at (-4,-0.5) {$\dots$};
    \fill (-3,-0.5) circle (3pt);
    \fill (-2,-0.5) circle (3pt);
    \fill (-1,-0.5) circle (3pt);
    \fill (0,-0.5) circle (3pt);
    \fill (1,-0.5) circle (3pt);
    \fill (2,-0.5) circle (3pt);
    \fill (3,-0.5) circle (3pt);
    \node at (4,-0.5) {$\dots$};
    \node at (-4,-1.5) {$\dots$};
    \fill (-3,-1.5) circle (3pt);
    \fill (0,-1.5) circle (3pt);
    \fill (3,-1.5) circle (3pt);
    \node at (4,-1.5) {$\dots$};
    \draw[dashed] (1.5,-0.25) -- (1.5,-1.75);
    \draw[dashed] (-1.5,-0.25) -- (-1.5,-1.75);
    \draw[thick] (-3,-0.5) -- (-3,-1.5);
    \draw[thick] (0,-0.5) -- (0,-1.5);
    \draw[thick] (3,-0.5) -- (3,-1.5);
    \draw[thick] (-2,-0.5) arc (180:360:0.5);
    \draw[thick] (1,-0.5) arc (180:360:0.5);

\node at (-5,0.5) {$\alpha$};
\node at (-5,-0.5) {$\beta$};
\draw[dashed] (-5,0) -- (4,0);
\node at (5,0) {$=$};
\node at (10,-1.5) {$\beta\circ \alpha$};
    \node at (6,-0.5) {$\dots$};
    \fill (7,-0.5) circle (3pt);
    \fill (10,-0.5) circle (3pt);
    \fill (13,-0.5) circle (3pt);
    \node at (14,-0.5) {$\dots$};
    \node at (6,0.5) {$\dots$};
    \fill (7,0.5) circle (3pt);
    \fill (8,0.5) circle (3pt);
    \fill (9,0.5) circle (3pt);
    \fill (10,0.5) circle (3pt);
    \fill (11,0.5) circle (3pt);
    \fill (12,0.5) circle (3pt);
    \fill (13,0.5) circle (3pt);
    \node at (14,0.5) {$\dots$};
    \draw[dashed] (8.5,0.75) -- (8.5,-0.75);
    \draw[dashed] (11.5,0.75) -- (11.5,-0.75);
    \draw[thick] (7,0.5) -- (7,-0.5);
    \draw[thick] (10,0.5) -- (10,-0.5);
    \draw[thick] (13,0.5) -- (13,-0.5);
    \draw[thick] (8,0.5) arc (180:360:0.5);
    \draw[thick] (11,0.5) arc (180:360:0.5);
    \draw[thick] (8.75,-0.5) arc (0:360:0.25);
    \draw[thick] (11.75,-0.5) arc (0:360:0.25);
\end{tikzpicture} 
\]
Affine diagrams do not have contractible loops, but composition can introduce such loops, as shown in the example above. Let $\mu(\alpha, \beta)$ denote the number of contractible loops in the diagram composition $\alpha\circ \beta$ of two affine diagrams $\alpha$ and $\beta$. A morphism in $\cat D^a$ is 
more precisely an affine diagram (with no contractible loops) along with a non-negative integer $\mu$, standing in for the number of non-contractible loops.

As discussed in \cite{Graham-Lehrer:98}, an affine diagram can also be visualized as a striped cylinder cobordism. However, the notion of equivalence of striped cylinder cobordism in \cref{defn:equiv reln on nested cobs} is not the same as the notion of equivalence used for affine diagrams; two affine diagrams
are \textit{affine equivalent} if there is a diffeomorphism of the ambient $2$-cylinder that restricts to an isotopy on the embedded $1$-manifold. In particular, Dehn twists are not affine equivalent to the identity. The discussion following \cite[Definition 1.3]{Graham-Lehrer:98}, shows that $\cat D^a$ is isomorphic to this variation on $\cyl^a$. In the remainder of this subsection, we freely make use of this identification.

\begin{corollary}\label{cor:cyl and Da}
    There is a functor $\cat D^a\to \cyl^a$ given on objects by $n\mapsto S^1_n$ and on morphisms by sending $(\alpha, \mu)\to [\alpha]$, where $\alpha\sim \beta$ is generated by the ``Dehn twist'' relation that identifies the affine diagram $n\to n$ with edges between the nodes $(k,0)$ and $(k+n,1)$, $k\in \mathbb{Z}$, with the identity.
\end{corollary}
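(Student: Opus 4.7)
The approach is direct: an affine diagram already lives in the infinite strip $\RR\times[0,1]$ and is equivariant under the prescribed shift, so passing to the quotient by this shift produces an embedded $1$-submanifold of the cylinder $S^1\times[0,1]$ with the correct boundary data, namely a striped cylinder cobordism $S^1_n \to S^1_m$. I would define the functor by sending a pair $(\alpha, \mu)$ to the class of this cobordism together with $\mu$ contractible loops tracked in the second coordinate of $\cyl^a$. On objects, $n$ maps to the marked circle with $n$ points arising from the quotient of $\mathbb{Z}$ by the shift.

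The key well-definedness check is that affine equivalence of diagrams implies diffeomorphism equivalence of the resulting nested cobordisms in $\cyl^a$. If $\alpha$ and $\beta$ are related by an ambient diffeomorphism of $\RR\times[0,1]$ that restricts to an isotopy of the $1$-submanifold and commutes with the shift, then descending this diffeomorphism to $S^1\times[0,1]$ produces a nested diffeomorphism between the corresponding cobordisms fixing the boundary pointwise; this is precisely the equivalence defining morphisms in $\cyl^a$. Since the equivalence used in $\cyl^a$ is coarser (it further identifies Dehn twists of the ambient cylinder with the identity, as is explicit in the Dehn twist relation of $\cyl$ itself), the assignment respects the relations on both sides.

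Functoriality then follows by comparing the two composition rules. In $\cat D^a$, composition is stacking of affine diagrams with the rule $(\alpha,\mu)\circ(\beta,\nu) = (\alpha\circ\beta, \mu+\nu+\mu(\alpha,\beta))$. Under the quotient to the cylinder, stacking corresponds exactly to the pushout of nested cobordisms along the shared marked circle, which is the composition in $\cyl^a$ as defined via $F^a\colon \Cob_{1<2}\to \Cob_{1<2}^a$ and which explicitly records the number $\mu(\alpha,\beta)$ of new contractible loops created by the pushout. The identity $(\id_n,0)\in \cat D^a$ is the diagram of vertical edges connecting $(k,0)$ to $(k,1)$, which descends to the mapping cylinder of the identity on $S^1_n$, i.e.\ $\id_{S^1_n}$ in $\cyl^a$.

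The main obstacle is organizational rather than technical: one must carefully disentangle the three levels of equivalence in play, namely affine equivalence (ambient diffeomorphisms restricting to an isotopy of the $1$-manifold), nested diffeomorphism equivalence in $\Cob_{1<2}$, and the further identification of Dehn twists with the identity built into $\cyl^a$, and then check that each stage of the construction respects the appropriate level. Once this bookkeeping is set up, the remaining verifications reduce to the standard observation that stacking of diagrams corresponds to pushout of cobordisms along shared boundary components, together with the fact that the contractible-loop count $\mu$ is invariant under nested diffeomorphism and additive under composition.
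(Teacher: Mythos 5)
Your proposal is correct and takes essentially the same route the paper does: the corollary is stated in the paper without a separate proof because it follows from the preceding discussion of the Graham--Lehrer identification of $\cat D^a$ with the affine-equivalence variant of $\cyl^a$, and your construction via the covering-space quotient $\RR\times[0,1]\to S^1\times[0,1]$ together with the observation that nested diffeomorphism equivalence is coarser than affine equivalence precisely by the Dehn twist relation is exactly the content being invoked there. The one thing the paper makes more precise that your sketch only gestures at is that the Dehn twist is the \emph{only} additional relation; the paper packages this in \cref{rmk:Da gens relns} by comparing the generators-and-relations presentations of $\cat D^a$ and $\cyl^a$ and observing that they differ exactly by relation (8), whereas you appeal informally to the relation "as is explicit in $\cyl$ itself," which implicitly rests on the mapping class group of the annulus being generated by the Dehn twist.
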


\begin{remark}\label{rmk:Da gens relns}
In light of \cref{rmk:gens and relns for cyla}, we can obtain a generators and relations description for $\cat D^a$. In particular, we remove relation (8) from \cref{thm: minimal list of generators} and add in a generator $\tw^{-1}_k$ for $k\geq 2$. The quotient $\cat D^a\to \cyl^a$ is given by imposing relation (8).
\end{remark}

Now fix a ring $R$ and an element $q\in R^{\times}$, and set $\delta:=-(q + q^{-1})$. To go from the diagram category $\cat D^a$ to the Temperley-Lieb category $\cat T^a$, we freely enrich over the category of $R$-modules. The following definition is \cite[Definition 2.5]{Graham-Lehrer:98}.

\begin{definition}
    The \textit{affine Temperley-Lieb category} $\cat T^a$ is the category $\cat D^a$ but freely enriched over $R$-modules, i.e. $\cat T^a$ has  
    objects non-negative integers and $\cat T^a(n,m)$ is the free $R$-module on $\cat D^a(n,m)$. For $\alpha$ and $\beta $ composable morphisms in $D^a$, composition is given by $\alpha\beta = \delta^{\mu(\alpha, \beta)}\alpha \circ \beta$, where $\alpha \circ \beta$ is the composition of affine diagrams in $D^a$. Composition is extended $R$-bilinearly to all of $\cat T^a$.
\end{definition}

\begin{definition}
    The \textit{affine Temperley-Lieb algebra} $\cat T^a(n)$ is the $R$-module $\cat T^a(n,n)$.
    A \textit{representation} of $\cat T^a$ (or $\cat T^a$-module) is a functor $F\colon \cat T^a\to {\rm Mod}(R)$.
\end{definition}

A representation $F$ determines representations of all the (affine) Temperley-Lieb algebras simultaneously, for all $n\geq 0$. 

\begin{remark}
    The category ${\rm Mod}(R)$ is naturally enriched over itself, and an enriched functor $F\colon \cat T^a\to {\rm Mod}(R)$ is equivalent to an ordinary functor $\cat D^a\to {\rm Mod}(R)$. Many examples, such as  \cite[Definition 2.6]{Graham-Lehrer:98}, are constructed this way.
\end{remark}

The composition $\cat D^a\to \cyl^a\to \cyl$ implies that representations of $\cyl$ provide one source of $\cat D^a$-modules (and hence $\cat T^a$-modules).

\begin{corollary}\label{cor:cyl object is affine TLA}
    Every $\cyl$-object is a $\cat T^a$-module.
\end{corollary}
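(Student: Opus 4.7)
My plan is to produce, from a $\cyl$-object $G\colon \cyl \to \operatorname{Mod}(R)$ (taking values in $\operatorname{Mod}(R)$ to access the $R$-linearity of $\cat T^a$), a functor $F\colon \cat T^a \to \operatorname{Mod}(R)$, which by the paper's definition is a $\cat T^a$-module. The construction proceeds in two steps: first I use the chain $\cat D^a \to \cyl^a \xrightarrow{F^r} \cyl$ from \cref{cor:cyl and Da} and \cref{defn:cyla and cyl} to pre-compose with $G$, yielding a functor $\tilde G\colon \cat D^a \to \operatorname{Mod}(R)$ with $\tilde G(\alpha) := G([\alpha])$; then I extend $\tilde G$ $R$-linearly along the inclusion $\cat D^a(n,m) \hookrightarrow \cat T^a(n,m) = R\langle \cat D^a(n,m)\rangle$ to obtain $F$ on morphisms, setting $F(n) := G(S^1_n)$ on objects.

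The substantive step is the functoriality check. For composable affine diagrams $\alpha, \beta$, the $\cat T^a$-composition is $\alpha\beta = \delta^{\mu(\alpha, \beta)}(\alpha \circ \beta)$, where $\mu(\alpha, \beta) \geq 0$ counts the contractible loops produced. Unfolding, $F(\alpha)F(\beta) = G([\alpha])G([\beta]) = G([\alpha \circ \beta])$ by functoriality of $G$ and the fact that $\cyl$-composition strips contractible loops, whereas $F(\alpha\beta) = \delta^{\mu(\alpha, \beta)} G([\alpha \circ \beta])$. These agree because the $\cyl$-relation $\de_{k+2}^i \circ \bi_k^i = \id_k$ (\cref{thm: minimal list of generators}(1)) already equates the elementary loop-creating composition with the identity in $\cyl$; combining this with the full presentation in \cref{cor:cyl gens and relns} forces $\delta^{\mu(\alpha, \beta)}$ to act as the identity on the image of $G$. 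Equivalently, $F$ factors through the specialization $\cat T^a \twoheadrightarrow \cat T^a/(\delta - 1)$, and the composite $\cat T^a \to \cat T^a/(\delta - 1) \to \operatorname{Mod}(R)$ is a bona fide functor, hence a $\cat T^a$-module in the paper's sense.

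The main obstacle is the tension between the $\delta$-twisted composition in $\cat T^a$ and the loop-insensitivity of $\cyl$-composition: my approach resolves this by passing through the specialization $\delta \mapsto 1$, which still produces a legitimate functor $\cat T^a \to \operatorname{Mod}(R)$ and hence a $\cat T^a$-module as the corollary asserts. The richer class of $\cat T^a$-modules on which $\delta$ acts nontrivially is obtained not from $\cyl$-objects but from $\cyl^a$-objects, via the construction in \cref{rmk:gens and relns for cyla} with $\chi_k := \delta \cdot \id$; the present corollary concerns only the restricted subclass of $\cat T^a$-modules built from $\cyl$-object data, on which $\delta$ necessarily collapses to $1$.
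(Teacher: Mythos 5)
Your first step---pulling the $\cyl$-object back along $\cat D^a\to\cyl^a\to\cyl$ and then linearizing---is exactly the paper's route: the corollary is deduced there from the composite functor $\cat D^a\to\cyl^a\to\cyl$ together with the remark, stated just before it, identifying enriched functors $\cat T^a\to{\rm Mod}(R)$ with ordinary functors out of $\cat D^a$. The gap is in your functoriality check. The parameter $\delta=-(q+q^{-1})$ is a fixed scalar of $R$, chosen before $\cat T^a$ is defined, and no relation in $\cyl$ can constrain how multiplication by $\delta$ acts on the modules $G(S^1_n)$. The relation $\de^i_{k+2}\circ\bi^i_k=\id_k$ from \cref{thm: minimal list of generators} says only that the loop-creating composite is the identity \emph{in $\cyl$}, which yields $F(\alpha)F(\beta)=G([\alpha\circ\beta])$; the equality $F(\alpha\beta)=F(\alpha)F(\beta)$ additionally requires $\bigl(\delta^{\mu(\alpha,\beta)}-1\bigr)\,G([\alpha\circ\beta])=0$, and nothing in the presentation of $\cyl$ provides this. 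So the assertion that the relations ``force $\delta^{\mu(\alpha,\beta)}$ to act as the identity on the image of $G$'' is false for general $R$, $q$, and $G$.

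The proposed repair does not close this gap. Whether you read $\cat T^a/(\delta-1)$ as base change along $R\to R/(\delta-1)$ or as the affine Temperley--Lieb category with parameter $1$, the composite $\cat T^a\to\cat T^a/(\delta-1)\to{\rm Mod}(R)$ is a functor only if the second arrow exists, and defining that arrow by $\alpha\mapsto G([\alpha])$ needs exactly the condition you are trying to sidestep: $(\delta-1)$ must annihilate the relevant values of $G$, since otherwise the assignment is not well defined on the quotient hom-modules. What your argument genuinely produces is a module over the affine Temperley--Lieb category at the special value $\delta=1$, and your closing claim that ``$\delta$ necessarily collapses to $1$'' reinterprets the statement rather than proving it for the fixed $\delta$. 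You have, in fact, put your finger on a real subtlety that the paper treats only implicitly: the honest bookkeeping of loops happens in $\cyl^a$ (\cref{defn:cyla and cyl}), where the extra generator $(\id_k,1)$ is sent to an endomorphism $\chi_k$ as in \cref{rmk:gens and relns for cyla}, and modules with $\chi_k=\delta\cdot\id$ are the natural source of $\cat T^a$-modules for general $\delta$---your last paragraph says as much. But the paper's own proof of the corollary is simply restriction along $\cat D^a\to\cyl^a\to\cyl$ combined with its enrichment remark; it does not pass through any specialization of $\delta$, and your middle step, which is supposed to reconcile the $\delta$-twisted composition with loop-insensitivity of $\cyl$, is where the proposed proof fails.
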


As a corollary of \cref{rmk:Da gens relns}, one could also obtain a concrete list of data needed to build a $\cat D^a$-module, similar to that of \cref{cor:cyl rep data}.
\subsection{Connection to annular Temperley-Lieb algebras}\label{sec:annular TLA}

In \cite{penneys}, Penneys defines an abstract version $a\Delta$ of Jones' annular Temperley-Lieb category ${\rm \bf Atl}$ and shows that $a\Delta\cong {\rm \bf Atl}$ as involutive categories. In this section, we briefly recall these categories and describe how $\bf{Atl}$ is related to $\cyl$. We adopt the notation of \cite[Section 2]{penneys} throughout.

The objects of $\bf{Atl}$ are $[n]$ for $n\in \mathbb{Z}_{\geq 1}$ along with two additional objects $[0^+]$ and $[0^-]$. For $n>0$, we can visualize $[n]$ as a circle with $2n$ marked points. 

The morphisms of the category $\bf{Atl}$ are constructed from \textit{$(m,n)$-tangles}, roughly defined as follows. An $(m,n)$-tangle $T$ is an annulus in the complex plane whose \textit{outer boundary} is the unit circle $D_0(T)$ and whose \textit{inner boundary} $D_1(T)$ is the circle of radius $1/4$. The inner boundary has $2m$ marked points and the outer boundary has $2n$ marked points, and every marked point meets exactly one string (a smoothly embedded curve in the annulus, transverse to the boundary circles). 
A string is either a closed curve (a \textit{loop}) 
or connects two marked points, and the strings do not intersect one another. Each region of the annulus is either shaded or unshaded, so that regions which share a string as a boundary have different shadings. Finally, both $D_0(T)$ and $D_1(T)$ come with a marked unshaded region, picked out by distinguishing a ``simple interval'' between two adjacent boundary points.

\begin{figure}[h!]
    \centering
\includegraphics[scale=0.11]{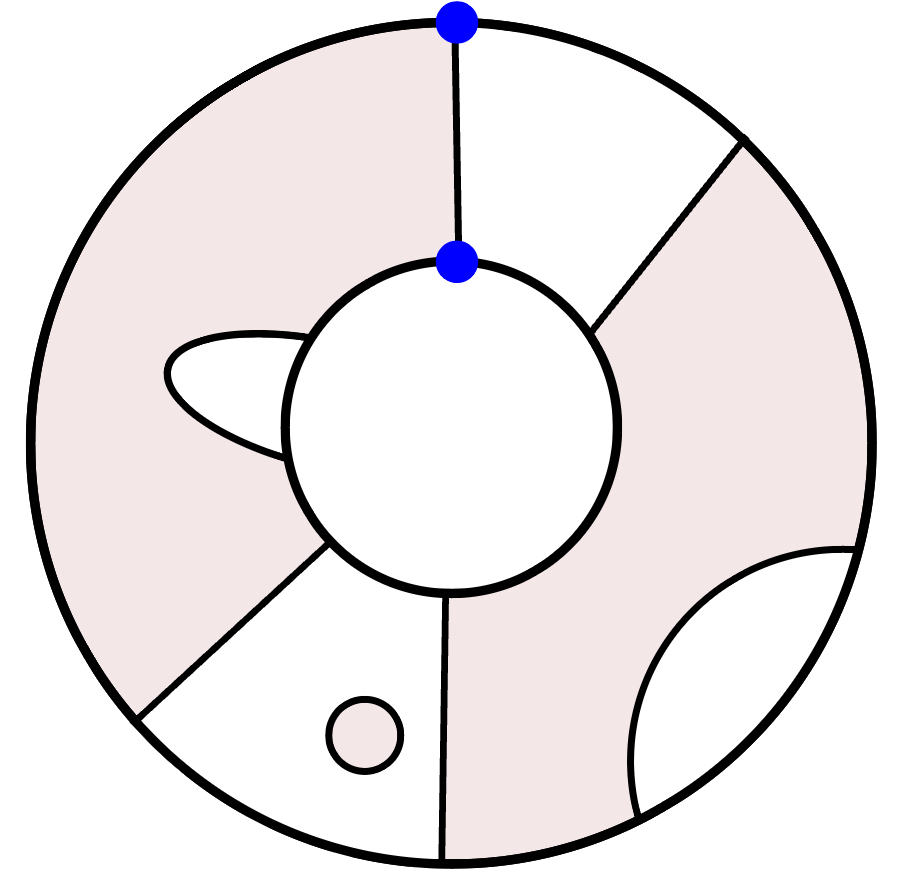}
    \caption{Example of a tangle.}
    \label{fig:tangle}
\end{figure}

Two annular tangles are said to be equivalent if there is a orientation-preserving diffeomorphism between the two. Composition of tangles is given by nesting annuli, after isotoping the strings to line up the marked regions.

\begin{figure}[h!]
    \centering
    \includegraphics[scale=0.15]{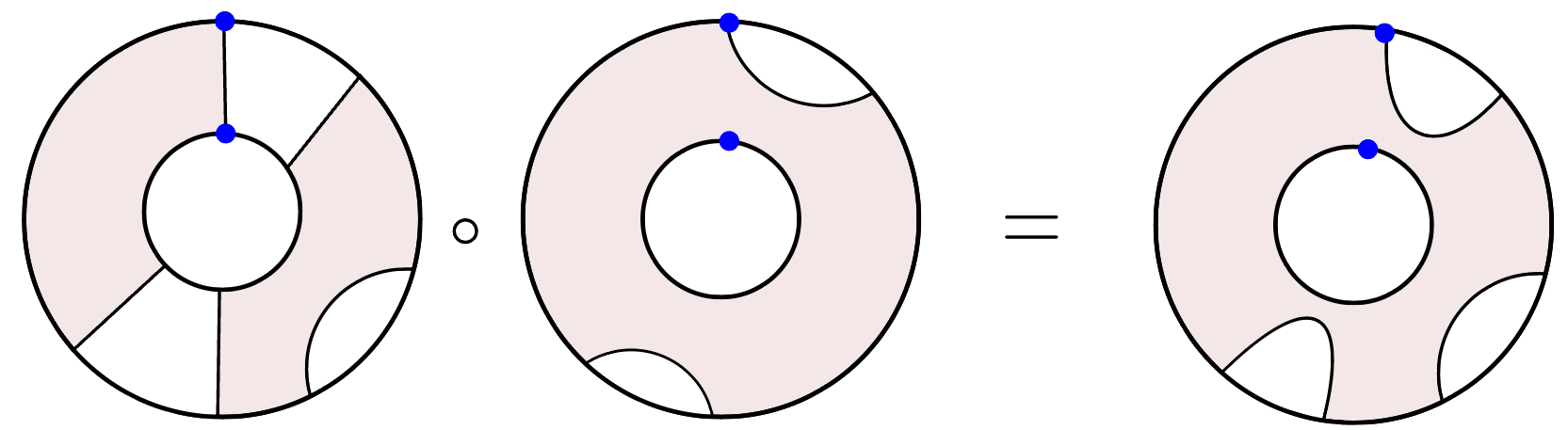}
    \caption{The composition of tangles in $\bf{Atl}$.}
    \label{fig:composition_tangle}
\end{figure}

\begin{definition}
The objects of $\bf{Atl}$ are $\mathbb{Z}_{\geq 1}\amalg\{0^{\pm}\}$. A morphism $[n]\to [m]$ in $\bf{Atl}$ is a triple $(T, c_+, c_-)$ where $T$ is an equivalence class of $(n,m)$-tangles and $c_+, c_-\in \mathbb{Z}_{\geq 0}$ denote the number of closed unshaded and shaded loops, respectively. Composition is given as described above.
\end{definition}

To relate $\bf{Atl}$ to $\cyl$, recall from \cref{defn:cyl0 and cyl1} that $\cyl = \cyl_0\amalg \cyl_1$ and $\cyl^a=\cyl^a_0\amalg \cyl^a_1$ can be partitioned into odd and even parts.

\begin{proposition}
    There is a functor ${\bf{Atl}}\to \cyl^a_0 \to  \cyl_0$ given by forgetting the shading and subsequently the contractible closed loops.
\end{proposition}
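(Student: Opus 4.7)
The plan is to define the functor $\Phi\colon {\bf{Atl}} \to \cyl^a_0$ by forgetting the shading of an annular tangle, which turns it into a striped cylinder cobordism together with a count of contractible closed loops, and then to post-compose with the quotient $F^r\colon \cyl^a_0 \to \cyl_0$ from \cref{defn:cyla and cyl} to discard the loop count.

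On objects I would set $\Phi([n]) = S^1_{2n}$ for $n\geq 1$ and $\Phi([0^+])=\Phi([0^-])=S^1_0$; the two parity-zero objects are distinguished only by the shading of their marked region and so are naturally identified once shading is discarded. The marked interval of a boundary circle picks out an arc between two adjacent marked points, and placing the basepoint of $S^1_{2n}$ in this arc induces a labeling of the remaining points. On morphisms, given $(T, c_+, c_-)\colon [n]\to [m]$, the underlying annulus of $T$ (diffeomorphic to $S^1\times [0,1]$) together with its embedded strings becomes, after the shading is forgotten, a striped cylinder cobordism $\overline{T}\colon S^1_{2n}\to S^1_{2m}$ in $\cyl^c$. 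I then send $(T, c_+, c_-) \mapsto ([\overline{T}], c_+ + c_-) \in \cyl^a_0$.

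Three functoriality checks are needed. (a) \emph{Well-definedness:} the equivalence relation on tangles (orientation-preserving diffeomorphism of the annulus fixing marked regions, up to isotopy of strings) is precisely nested diffeomorphism equivalence in the sense of \cref{defn:equiv reln on nested cobs} for the resulting striped cylinders, so $\Phi$ descends to equivalence classes. (b) \emph{Identity:} the identity tangle on $[n]$, consisting of $2n$ parallel strings connecting the $i$th inner point to the $i$th outer point with $c_+ = c_- = 0$, maps to $(\id_{2n}, 0)$. (c) \emph{Composition:} stacking $(T, c_+, c_-)$ and $(T', c'_+, c'_-)$ in $\bf{Atl}$ glues the annuli along the shared boundary after aligning marked regions, hence basepoints; after forgetting shading this is exactly the pushout composition of striped cylinders, and the shaded and unshaded new-loop counts $\mu_\pm(T,T')$ sum to the new-loop count $\mu(\overline{T},\overline{T'})$ tracked by $\cyl^a_0$. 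Composition of the total loop counts $c_+ + c'_+ + \mu_+(T,T')$ and $c_- + c'_- + \mu_-(T,T')$ in $\bf{Atl}$ therefore matches $(c_+ + c_-) + (c'_+ + c'_-) + \mu(\overline{T},\overline{T'})$ in $\cyl^a_0$. Post-composition with $F^r$ then gives the stated functor ${\bf{Atl}}\to \cyl^a_0 \to \cyl_0$.

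The main obstacle is part (c): verifying that the refined count of shaded versus unshaded closed loops in $\bf{Atl}$ is compatible with the single contractible-loop count in $\cyl^a_0$. This comes down to observing that every newly produced closed loop in the annular composite bounds a region with a well-defined shading inherited from the two tangles, and that this partition is simply discarded by the forgetful map; so $\mu_+ + \mu_- = \mu$ and the rest is routine.
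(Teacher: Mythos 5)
Your construction is correct, but it takes a genuinely different route from the paper. The paper defines the functor on Penneys' explicit generators for $\bf{Atl}$ (the caps $a_i$, cups $b_i$, rotation $t$, and the loop-adding maps), sending them to the corresponding $\de^{2i}_{2n}$, $\bi^{2i}_{2n}$, $\tw^2_{2n}$, and $(\id_{2n},1)^{j+k}$ in $\cyl^a_0$, and then verifies functoriality by checking \cref{rmk:gens and relns for cyla} against the relations in Penneys' Theorem 2.20. You instead build the functor directly on arbitrary tangles by forgetting the shading and the $(c_+,c_-)$ refinement, and verify well-definedness, unit, and composition geometrically. Your route avoids any dependence on a generators-and-relations presentation of $\bf{Atl}$, and your observation that $\mu_+ + \mu_- = \mu$ (every newly-formed closed loop in the composite inherits a well-defined shading, which the forgetful map simply discards) is exactly the right reason that composition is preserved. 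The paper's route is more work on the surface but pays off downstream: it records explicitly that the rotation $t\in\bf{Atl}(n,n)$ lands on $\tw^2_{2n}$ rather than $\tw_{2n}$, which underlies the subsequent remark that $\cyl_0$ has ``twice as many twists'' as $\bf{Atl}$, and it gives the concrete generator-level recipe used in \cref{cor:cyl0 obj is annular TLA}.

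Two small caveats on your write-up. First, in step (a) the phrase ``is precisely nested diffeomorphism equivalence'' overstates the case: what is needed (and true) is only that equivalence of shaded annular tangles \emph{implies} diffeomorphism equivalence of the underlying striped cylinders, so that $\Phi$ descends; the two relations are not identical, since distinct shadings of the same unshaded cylinder are identified only after forgetting. Second, you should say a word about why morphisms involving $[0^+]$ and $[0^-]$ pose no problem: composability in $\bf{Atl}$ requires matching shaded boundary data, but after forgetting shading both objects collapse to $S^1_0$, where every pair is composable, so the functor remains well-defined and the identity tangles on $[0^\pm]$ both map to $\id_{S^1_0}$. Neither point is a gap in the argument, just places where the exposition should be tightened.
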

\begin{proof}
    The second functor is the one appearing in \cref{defn:cyla and cyl}. Using the generators and relations from \cite[\S 2.2]{penneys}, we send $[n]\in {\rm \bf Atl}$ to the object $S^1_{2n}$, sending the left point in the marked simple interval to the marked point in $S^1_{2n}$. The generators are assigned as follows:\begin{itemize}
        \item $a_i\in {\bf{Atl}}(n,n-1)$ maps to $\de^{2i}_{2n}\in \cyl^a(2n, 2n-2)$,
        \item $b_i\in {\bf{Atl}}(n,n+1)$ maps to $\bi^{2i}_{2n}\in \cyl^a(2n, 2n+2)$,
        \item $t\in {\bf Atl}(n,n)$ maps to $\tw^2_{2n}\in \cyl^a(2n,2n)$,
        \item $(\id_{[n]}, j, k)\in {\bf Atl}(n,n)$ is sent to $(\id_{2n}, j+k) = (\id_{2n}, 1)^{j+k}\in \cyl^a(2n,2n)$ for all $n\geq 0$ and $j,k\in \mathbb{N}$. In particular $\id_{[0+]}$ and $\id_{[0-]}$ are both sent to $\id_0$. 
    \end{itemize}
Checking our relations in \cref{rmk:gens and relns for cyla} against those in \cite[Theorem 2.20]{penneys} shows that this assignment is indeed functorial.
\end{proof}

The only morphism that is not sent to ``itself'' is the twist $t$. Since Penneys' tangles are shaded, his twist operation has to preserve the shading, so $t\colon [n]\to [n]$ is sent to ${\rm tw}_{2n}^2\colon S^1_{2n}\Rightarrow S^1_{2n}$. Hence the primary difference between our definition and Penneys' is that we have more twists.

\begin{definition}
    A \textit{shaded annular object} in $\cat C$ is a functor $\bf{Atl}\to \cat C$.
\end{definition}

These are annular Temperley-Lieb algebras, as in \cite{jones:01}, when $\cat C$ is a category of $R$-modules. These are also closely related to affine Temperley-Lieb algebras \cite{Graham-Lehrer:98} from the previous subsection. 
Restricting along the composition ${\bf Atl}\to \cyl^a_0\to \cyl_0$ implies the following result.

\begin{corollary}\label{cor:cyl0 obj is annular TLA}
    Every functor $\cyl_0\to \cat C$ is a shaded annular $\cat C$-object.
\end{corollary}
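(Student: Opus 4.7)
The approach is immediate restriction along the functor constructed in the preceding proposition. The plan is: given any functor $F\colon \cyl_0 \to \cat C$, precompose with the composite $G\colon \mathbf{Atl} \to \cyl^a_0 \to \cyl_0$ furnished by that proposition. The resulting functor $F \circ G\colon \mathbf{Atl} \to \cat C$ is, by the definition just given above the corollary, a shaded annular $\cat C$-object. This is really a one-sentence argument: the corollary is a direct consequence of the existence of a functor $\mathbf{Atl} \to \cyl_0$, together with the definition of shaded annular $\cat C$-object as a functor out of $\mathbf{Atl}$.

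All the substantive content has already been discharged in the preceding proposition, where the assignment of Penneys' generators $a_i, b_i, t$ and the two identity morphisms $(\id_{[n]}, j, k)$ in $\mathbf{Atl}$ to the corresponding $\de^{2i}_{2n}$, $\bi^{2i}_{2n}$, $\tw^2_{2n}$, and $(\id_{2n}, j+k)$ in $\cyl^a_0$ was shown to respect the relations in $\mathbf{Atl}$ (from \cite[Theorem 2.20]{penneys}) when tested against the relations in $\cyl^a$ given in \cref{rmk:gens and relns for cyla}. Composing further with the quotient functor $\cyl^a_0 \to \cyl_0$ from \cref{defn:cyla and cyl} yields the desired $G$.

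There is no real obstacle to overcome at this stage: the only place something could have gone wrong is the relation-checking already performed in the proof of the preceding proposition. In particular, the parity considerations (so that $\cyl_0$ and not $\cyl$ is the target) are built into the target of the preceding proposition, and the fact that $\mathbf{Atl}$'s twist $t$ lands on $\tw^2$ rather than $\tw$ is exactly what allows the (shaded) twist to be respected. Hence the corollary follows formally by precomposition.
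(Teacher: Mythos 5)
Your argument is correct and is essentially identical to the paper's: the corollary is stated immediately after the sentence ``Restricting along the composition ${\bf Atl}\to \cyl^a_0\to \cyl_0$ implies the following result,'' which is precisely the precomposition you describe. The relation-checking is indeed entirely discharged in the preceding proposition, and your added remarks on parity and on $t\mapsto \tw^2$ accurately reflect why the construction lands in $\cyl_0$.
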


\subsection{Connection to cyclic objects}\label{sec:cyclic objects}
A main result of \cite{penneys} is a new proof of Jones's result that $\bf{Atl}$ can be obtained from two copies of the cyclic category $\Lambda$, glued together over the groupoid $\amalg_{n \neq 0} C_n$ of cyclic groups, with some minor adjustments. In our case, the connection between $\cyl_0$ and $\Lambda$ is a bit simpler.

There are a few equivalent ways to define $\Lambda$ \cite{connes, loday:92}, but for our purposes, the most helpful description is the following.

\begin{definition}\label{defn:cyclic cat}
    The category $\Lambda^{\rm op}$ has objects $[n]$ for $n\in \mathbb{Z}_{\geq 0}$. Morphisms are generated by\begin{itemize}
        \item[] $d_n^i\colon [n]\to [n-1]$ for $n\geq 1$ and $0\leq i\leq n$,
        \item[] $s_n^j\colon [n]\to [n+1]$ for $n\geq 0$ and $0\leq j\leq n$,
        \item[] $t_n\colon [n]\to [n]$ for $n\geq 0$
    \end{itemize} where $d^i_n$ and $s^j_n$ are the face and degeneracy maps from $\Delta^{\rm op}$ and $t_n(x) = x+1$ for $x\neq n$ and $t_n(n)=0$. These generators are subject to the simplicial relations \begin{itemize}
        \item[(i)] $d^i_n\circ d^j_{n+1} = d^{j-1}_{n}\circ d_{n+1}^{i}$ for $i<j$,
        \item[(ii)] $s_n^i\circ s_{n-1}^j = s_n^{j+1} \circ s_{n-1}^{i}$ for $i\leq j$,
        \item[(iii)] $d_{n+1}^i\circ s_n^j = \left\{\begin{array}{cc}
            s_{n-1}^{j-1} \circ d^i_n & i<j;\\
             \id & i=j,j+1; \\
             s_{n-1}^j \circ d_n^{i-1} & i> j+1,
        \end{array}\right.$
        \end{itemize}
        and the cyclic relations
        \begin{itemize}
        \item[(iv)] $t_n^{n+1} = \id$,
        \item[(v)] $t_{n+1}\circ s_n^j = s_n^{j+1}\circ t_n$,
        \item[(vi)] $t_n\circ d_{n+1}^i = d_{n+1}^{i+1}\circ t_{n+1}$.
    \end{itemize}
\end{definition}

The cyclic category can also be described visually \cite{malkiewich:15}, where $[n]=\{0,1,\dots, n\}$ is thought of as a circle with $n+1$ marked points with cyclic labeling and morphisms are annular diagrams, as below. These visualizations are helpful for the comparison between $\Lambda$ and $\cyl_0$.
\[
\begin{tikzpicture}
\coordinate (center) at (0,0);
\def\radius{1cm}
  \draw[thick] (center) circle[radius=\radius];

    \fill[blue] (center)+(90:\radius) circle[radius=3pt];
    \node at (90:1.5cm) {$0$};
    \fill (center)+(35:\radius) circle[radius=3pt];
    \node at (35:1.5cm) {$1$};
    \fill (center)+(-20:\radius) circle[radius=3pt];
    \node at (-20:1.5cm) {$2$};
    \node at (-60:1.5cm) {$\cdot$};
    \node at (-70:1.5cm) {$\cdot$};
    \node at (-80:1.5cm) {$\cdot$};
    \fill (center)+(145:\radius) circle[radius=3pt];
    \node at (145:1.5cm) {$n$};
        \fill (center)+(145:\radius) circle[radius=3pt];
        \fill (center)+(190:\radius) circle[radius=3pt];
    \node at (190:1.75cm) {$n-1$};

    \node at (270:2cm) {$[n]$};
\end{tikzpicture}
\hspace{0.5cm}
\begin{tikzpicture}
\coordinate (center) at (0,0);
\def\radius{1cm}
  \draw[thick] (center) circle[radius=1cm];

    \fill[blue] (center)+(90:\radius) circle[radius=2pt];
    \fill (center)+(18:\radius) circle[radius=2pt];
    \fill (center)+(162:\radius) circle[radius=2pt];
    \fill (center)+(234:\radius) circle[radius=2pt];
    \fill (center)+(306:\radius) circle[radius=2pt];

  \draw[thick] (center) circle[radius=0.25cm];
    \fill[blue] (center)+(90:0.25cm) circle[radius=2pt];
    \fill (center)+(18:0.25cm) circle[radius=2pt];
    \fill (center)+(162:0.25cm) circle[radius=2pt];
    \fill (center)+(234:0.25cm) circle[radius=2pt];
    \fill (center)+(306:0.25cm) circle[radius=2pt];
    
    \draw (90:0.25cm) --  (18:\radius);
    \draw (18:0.25cm) --  (306:\radius);
    \draw (306:0.25cm) --  (234:\radius);
    \draw (234:0.25cm) --  (162:\radius);
    \draw (162:0.25cm) --  (90:\radius);

    \node at (270:2cm) {$t_4\colon [4]\xrightarrow{\cong} [4]$};
\end{tikzpicture}
\hspace{0.75cm}
\begin{tikzpicture}
\coordinate (center) at (0,0);
\def\radius{1cm}
  \draw[thick] (center) circle[radius=1cm];

    \fill[blue] (center)+(90:\radius) circle[radius=2pt];
    \fill (center)+(18:\radius) circle[radius=2pt];
    \fill (center)+(162:\radius) circle[radius=2pt];
    \fill (center)+(234:\radius) circle[radius=2pt];
    \fill (center)+(306:\radius) circle[radius=2pt];

  \draw[thick] (center) circle[radius=0.25cm];
    \fill[blue] (center)+(90:0.25cm) circle[radius=2pt];
    \fill (center)+(180:0.25cm) circle[radius=2pt];
    \fill (center)+(270:0.25cm) circle[radius=2pt];
    \fill (center)+(0:0.25cm) circle[radius=2pt];
    
    \draw[blue] (90:0.25cm) --  (90:\radius);
    \draw (180:0.25cm) --  (162:\radius);
    \draw (270:0.25cm) --  (234:\radius);
    \draw (0:0.25cm) --  (18:\radius);

    \node at (270:2cm) {$s_3^2\colon [3]\to [4]$};
\end{tikzpicture}
\hspace{0.75cm}
\begin{tikzpicture}
\coordinate (center) at (0,0);
\def\radius{1cm}
  \draw[thick] (center) circle[radius=1cm];

    \fill[blue] (center)+(90:\radius) circle[radius=2pt];
    \fill (center)+(180:\radius) circle[radius=2pt];
    \fill (center)+(270:\radius) circle[radius=2pt];
    \fill (center)+(0:\radius) circle[radius=2pt];

  \draw[thick] (center) circle[radius=0.25cm];
    \fill[blue] (center)+(90:0.25cm) circle[radius=2pt];
    \fill (center)+(18:0.25cm) circle[radius=2pt];
    \fill (center)+(162:0.25cm) circle[radius=2pt];
    \fill (center)+(234:0.25cm) circle[radius=2pt];
    \fill (center)+(306:0.25cm) circle[radius=2pt];
    
    \draw (90:0.25cm) --  (90:\radius);
    \draw (162:0.25cm) --  (180:\radius);
    \draw (306:0.25cm) --  (270:\radius);
    \draw (18:0.25cm) --  (0:\radius);
    \draw (234:0.25cm) --  (270:\radius);
    
    \node at (270:2cm) {$d_4^2\colon [4]\to [3]$};
\end{tikzpicture}
\]
Composition is again given by nesting the annular diagrams, as shown for $d_4^2\circ t_4$ in the figure below.
\[
\begin{tikzpicture}[scale=0.75]
\coordinate (center) at (0,0);
\def\radius{1cm}
  \draw[thick] (center) circle[radius=1cm];

    \fill[blue] (center)+(90:\radius) circle[radius=2pt];
    \fill (center)+(18:\radius) circle[radius=2pt];
    \fill (center)+(162:\radius) circle[radius=2pt];
    \fill (center)+(234:\radius) circle[radius=2pt];
    \fill (center)+(306:\radius) circle[radius=2pt];

  \draw[thick] (center) circle[radius=0.25cm];
    \fill[blue] (center)+(90:0.25cm) circle[radius=2pt];
    \fill (center)+(18:0.25cm) circle[radius=2pt];
    \fill (center)+(162:0.25cm) circle[radius=2pt];
    \fill (center)+(234:0.25cm) circle[radius=2pt];
    \fill (center)+(306:0.25cm) circle[radius=2pt];
    
    \draw (90:0.25cm) --  (18:\radius);
    \draw (18:0.25cm) --  (306:\radius);
    \draw (306:0.25cm) --  (234:\radius);
    \draw (234:0.25cm) --  (162:\radius);
    \draw (162:0.25cm) --  (90:\radius);

  \draw[thick] (center) circle[radius=1.75cm];
    \fill[blue] (center)+(90:1.75cm) circle[radius=2pt];
    \fill (center)+(180:1.75cm) circle[radius=2pt];
    \fill (center)+(270:1.75cm) circle[radius=2pt];
    \fill (center)+(0:1.75cm) circle[radius=2pt];

    \draw (90:1.75cm) --  (90:\radius);
    \draw (0:1.75cm) --  (18:\radius);
    \draw (306:0.25cm) --  (234:\radius);
    \draw (180:1.75cm) --  (162:\radius);
    \draw (270:1.75cm) --  (234:\radius);
    \draw (270:1.75cm) --  (306:\radius);
\end{tikzpicture} \begin{tikzpicture}[scale=0.75]
\coordinate (center) at (0,0);
\def\radius{1cm}

  \draw[thick] (center) circle[radius=0.25cm];
    \fill[blue] (center)+(90:0.25cm) circle[radius=2pt];
    \fill (center)+(18:0.25cm) circle[radius=2pt];
    \fill (center)+(162:0.25cm) circle[radius=2pt];
    \fill (center)+(234:0.25cm) circle[radius=2pt];
    \fill (center)+(306:0.25cm) circle[radius=2pt];

  \draw[thick] (center) circle[radius=1.75cm];
    \fill[blue] (center)+(90:1.75cm) circle[radius=2pt];
    \fill (center)+(180:1.75cm) circle[radius=2pt];
    \fill (center)+(270:1.75cm) circle[radius=2pt];
    \fill (center)+(0:1.75cm) circle[radius=2pt];

\draw (90:1.75cm) --  (162:0.25cm);
\draw (180:1.75cm) --  (234:0.25cm);
\draw (270:1.75cm) --  (306:0.25cm);
\draw (0:1.75cm) --  (90:0.25cm);

\draw (270:1.75cm) to[out=20,in=-20]  (18:0.25cm);

\node at (180:2.15cm) {$=$};
\end{tikzpicture}
\]

\begin{theorem}\label{thm:cyl and cyclic cat}
    There is an inclusion of categories $\Lambda^{\rm op}\hookrightarrow\cyl$.
\end{theorem}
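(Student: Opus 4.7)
The plan is to construct an explicit functor $\Phi\colon \Lambda^{\rm op}\to\cyl$ (landing in the even-parity subcategory $\cyl_0$) and then verify it is injective on objects and faithful. The guiding geometric picture is that each vertex of $[n]\in\Lambda^{\rm op}$ should correspond to a pair of adjacent marked points on a circle, so that the cyclic action of order $n+1$ becomes a rotation by two marked points on $S^1_{2n+2}$. Concretely, set
\[
\Phi([n])=S^1_{2n+2},\quad \Phi(t_n)=\tw_{2n+2}^2,\quad \Phi(d_n^i)=\de_{2n+2}^{2i},\quad \Phi(s_n^j)=\bi_{2n+2}^{2j+1},
\]
for $0\leq i\leq n$ and $0\leq j\leq n$; all indices fall within the allowed ranges of \cref{defn: defn of gen}.

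The bulk of the work is to check that $\Phi$ respects each of the relations (i)--(vi) of \cref{defn:cyclic cat}. The cyclic order relation (iv) is immediate from the Dehn-twist relation (8) in \cref{thm: minimal list of generators}, since $(\tw_{2n+2}^2)^{n+1}=\tw_{2n+2}^{2n+2}=\id$. The twist commutation relations (v), (vi) and the commuting face/degeneracy relations (i), (ii) follow directly by applying their $\cyl$ counterparts \cref{thm: minimal list of generators}(4)--(7) twice and comparing doubled indices; for instance, (v) doubles to $\tw^2_{2n+4}\circ\bi^{2j+1}_{2n+2}=\bi^{2j+3}_{2n+2}\circ\tw^2_{2n+2}$.

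The main obstacle is the mixed face-degeneracy relation (iii). For the case $i<j$, the image of the LHS is $\de_{2n+4}^{2i}\circ\bi_{2n+2}^{2j+1}$; since $2i<(2j+1)-1$, the birth-death non-interaction relation (3) of \cref{thm: minimal list of generators} rewrites this as $\bi_{2n}^{2j-1}\circ\de_{2n+2}^{2i}=\Phi(s_{n-1}^{j-1}\circ d_n^i)$. The case $i>j+1$ is handled symmetrically using the other branch of relation (3). For the identity cases $i\in\{j,j+1\}$, the image indices satisfy $2i=(2j+1)\pm 1$, so the snake relation (2) of \cref{thm: minimal list of generators} collapses the composite to the identity. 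Hence $\Phi$ is a well-defined functor.

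Injectivity on objects is clear from $n\mapsto 2n+2$. For faithfulness, I would use the fact that every morphism in $\Lambda^{\rm op}$ admits a canonical normal form $s_\bullet\circ t^k\circ d_\bullet$ (a sequence of degeneracies, a cyclic power, and a sequence of faces), determined uniquely by the underlying surjection-with-section-plus-rotation data. Under $\Phi$, such a factorization lands in a composition of odd-indexed births, an even power of $\tw$, and even-indexed deaths, with $\br=0$ and $\tau\neq 0$. Comparing against the unique normal form $C_{III}\circ C_{II}\circ C_I$ from \cref{thm: unique decomp} (whose type-I, type-II, type-III factors are determined by $\dind$, $t_0$, and $\bind$) shows that the cyclic data can be recovered from the invariants of $\Phi(f)$, so $\Phi$ is injective on hom-sets.
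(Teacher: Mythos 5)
Your functor is the same one the paper describes---$[n]\mapsto S^1_{2n+2}$, $t_n\mapsto\tw^2_{2n+2}$, face maps to even-indexed deaths, degeneracies to births---but your index for the degeneracies differs from the paper's, and your choice is the correct one. The paper's proof writes $s_n^j\mapsto\bi^{2j}_{2n+2}$; with that choice the mixed relation (iii) of \cref{defn:cyclic cat} fails when $i=j+1$: the image $\de^{2j+2}_{2n+4}\circ\bi^{2j}_{2n+2}$ has index gap $2$, so it lands in the ``no interaction'' branch of relation (3) of \cref{thm: minimal list of generators} and simplifies to $\bi^{2j}_{2n}\circ\de^{2j}_{2n+2}$ rather than the identity. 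Your $s_n^j\mapsto\bi^{2j+1}_{2n+2}$ repairs this: for $i\in\{j,j+1\}$ the index gap $|2i-(2j+1)|$ equals $1$, the snake relation applies, and the remaining verifications go through as you wrote them. (One small imprecision: the doubled indices in (i) and (ii) already satisfy the hypotheses of \cref{thm: minimal list of generators}(4), (5) directly, so a single application suffices; the twofold application is needed only in (v) and (vi), where $t_n\mapsto\tw^2$.) You also sketch a faithfulness argument via normal forms, something the paper's proof does not address at all---a genuine gap, since the theorem asserts an inclusion and not merely a functor. Your plan of matching the $\Lambda^{\rm op}$ normal form $s_\bullet\circ t^k\circ d_\bullet$ against \cref{thm: unique decomp} and reading the face/degeneracy/cyclic data off the invariants $\dind,\bind,\tau,t_0$ is sound, though the details (uniqueness of the $\Lambda^{\rm op}$ normal form, and that $\Phi$ is injective on those normal forms) would need to be written out in full.
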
\begin{proof}
    The inclusion $\Lambda\to \cyl_0$ is given by sending $[n]\mapsto S^1_{2(n+1)}$, $d_n^i\mapsto d_{2n+2}^{2i}$, $s_n^j\mapsto b_{2n+2}^{2j}$, and $t_n\mapsto t_{2n+2}^2$; see Figure \ref{fig:duals} for an example. 
    The claim follows by checking the generators and relations for $\cyl_0$ from \cref{thm: minimal list of generators} against those in \cref{defn:cyclic cat}.
\end{proof}

\begin{figure}[ht]
\adjustbox{scale=0.55}{

\tikzset{every picture/.style={line width=0.75pt}} 

\begin{tikzpicture}[x=0.75pt,y=0.75pt,yscale=-1,xscale=1]

\draw (323.5,260.5) node  {\includegraphics[width=405.75pt,height=333.75pt]{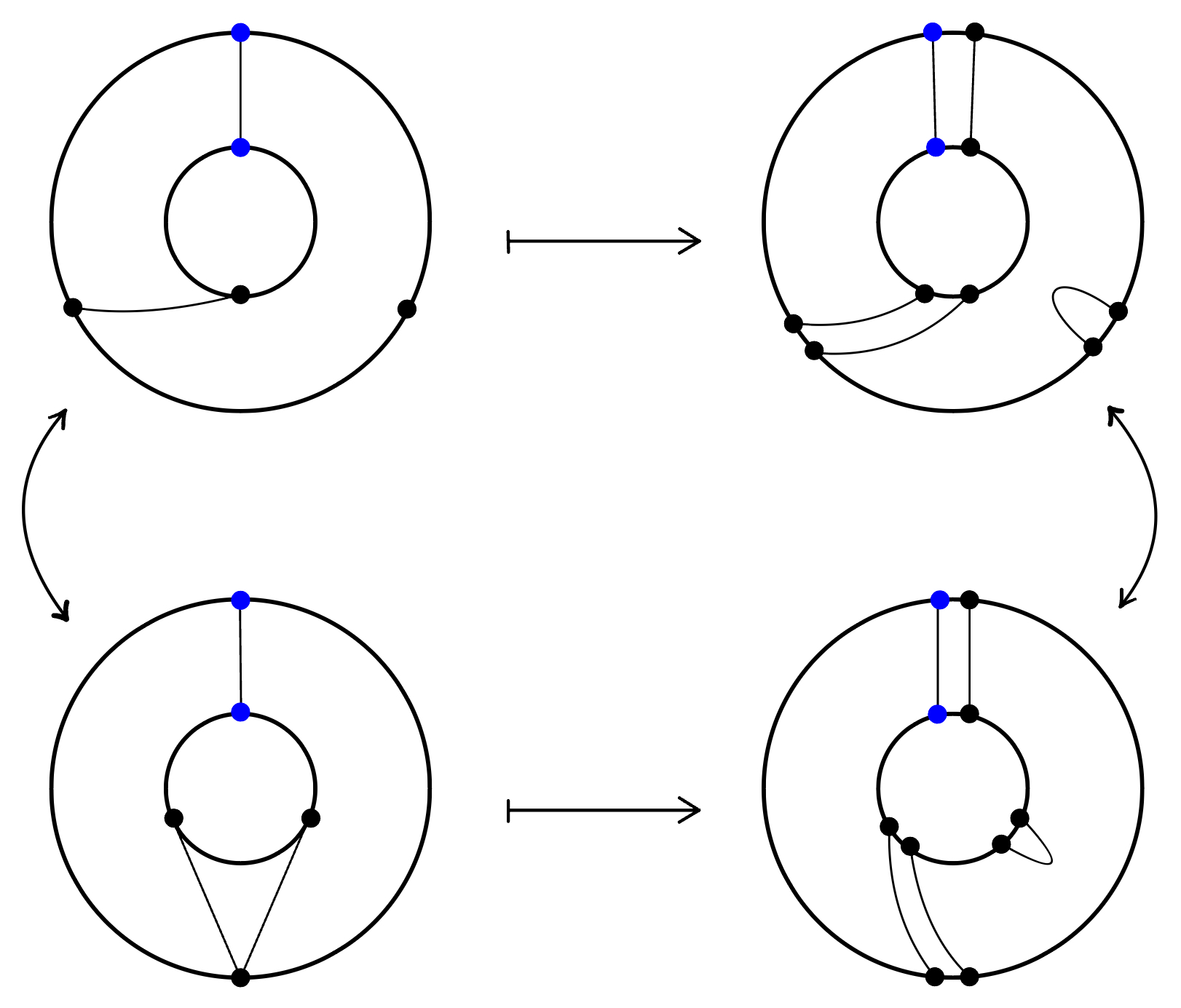}};

\draw (164,40.95) node  [font=\footnotesize]  {$0$};
\draw (164,114.95) node  [font=\footnotesize]  {$0$};
\draw (164,155.95) node  [font=\footnotesize]  {$1$};
\draw (251,179.95) node  [font=\footnotesize]  {$1$};
\draw (75,178.95) node  [font=\footnotesize]  {$2$};
\draw (578,181.95) node  [font=\footnotesize]  {$1$};
\draw (564,199.95) node  [font=\footnotesize]  {$1'$};
\draw (481,39.95) node  [font=\footnotesize]  {$0$};
\draw (501,39.95) node  [font=\footnotesize]  {$0'$};
\draw (482,115.95) node  [font=\footnotesize]  {$0$};
\draw (498,115.95) node  [font=\footnotesize]  {$0'$};
\draw (479,155.95) node  [font=\footnotesize]  {$1'$};
\draw (500,155.95) node  [font=\footnotesize]  {$1$};
\draw (422,203.95) node  [font=\footnotesize]  {$2$};
\draw (405,187.95) node  [font=\footnotesize]  {$2'$};
\draw (484,290.95) node  [font=\footnotesize]  {$0$};
\draw (499,290.95) node  [font=\footnotesize]  {$0'$};
\draw (484,365.95) node  [font=\footnotesize]  {$0$};
\draw (499,365.95) node  [font=\footnotesize]  {$0'$};
\draw (514,394.95) node  [font=\footnotesize]  {$1$};
\draw (504,405.95) node  [font=\footnotesize]  {$1'$};
\draw (482,405.95) node  [font=\footnotesize]  {$2$};
\draw (471,395.95) node  [font=\footnotesize]  {$2'$};
\draw (500,481.95) node  [font=\footnotesize]  {$1$};
\draw (482,481.95) node  [font=\footnotesize]  {$1'$};
\draw (165,290.95) node  [font=\footnotesize]  {$0$};
\draw (165,363.95) node  [font=\footnotesize]  {$0$};
\draw (189,395.95) node  [font=\footnotesize]  {$1$};
\draw (144,395.95) node  [font=\footnotesize]  {$2$};
\draw (165,481.95) node  [font=\footnotesize]  {$1$};
\draw (45,265.95) node  [font=\footnotesize]  {$dual$};
\draw (603,264.95) node  [font=\footnotesize]  {$dual$};
\end{tikzpicture}
}
\caption{Example of the map $\Lambda \hookrightarrow\cyl$.}
\label{fig:duals}
\end{figure}

\begin{remark}
    Since both $\cyl_0$ and $\Lambda$ are self-dual, the op in the theorem above is superfluous and is merely present to make the comparison more direct.
\end{remark}

Recall that a cyclic object in a category $\cat C$ is defined as a functor $X_\bullet\colon \Lambda^{\rm op}\to \cat C$. In light of the inclusion $\Delta\to \Lambda$, a cyclic object can be viewed as a simplicial object with extra structure, namely that the $n$-simplices have an automorphism $X_n\to X_n$ satisfying the cyclic relations. This automorphism specifies an action of the cyclic group $C_{n}$ on $X_{n-1}$. To extend a cyclic object to a functor out of $\cyl_0$, we need $X_{n-1}$ to actually have a $C_{2n}$-action. This ``extra structure'' we are looking for is described by the following category.

\begin{definition}\label{defn:sqrt Lambda}
    Let $\sqrt{\Lambda}^{\rm op}$ be the category with the same generators and relations as $\Lambda^{\rm op}$ (\cref{defn:cyclic cat}) except that that $t_n$ is replaced with a generator called $\sqrt{t_n}\colon [n]\to [n]$, and relations (iv)--(vi) are replaced by the following:\begin{itemize}
        \item[(iv)] $\sqrt{t_n}^{2(n+1)}=\id$,
       \item[(v)] $\sqrt{t_{n+1}}^2\circ s_n^j = s_n^{j+1}\circ \sqrt{t_n}^2$,
        \item[(vi)] $\sqrt{t_n}^2\circ d_{n+1}^i = d_{n+1}^{i+1}\circ \sqrt{t_{n+1}}^2$.
    \end{itemize} 
\end{definition}

The inclusion $\Lambda^{\rm op}\to \sqrt{\Lambda}^{\rm op}$ is the identity on the face and degeneracy maps, and sends $t_n$ to $\sqrt{t_n}^2$. A functor $X\colon \sqrt{\Lambda}^{\rm op}\to \cat C$ is not quite a cyclic object, but can be viewed as a cyclic object whose $C_n$-action ``has a square root;'' there is an inclusion $\Lambda^{\rm op}\to \sqrt{\Lambda}^{\rm op}$ which is the identity almost everywhere except $t_n\in \Lambda^{\rm op}$ is mapped to $\sqrt{t_n}^2\in \sqrt{\Lambda}^{\rm op}$.  

\begin{remark}
    The category $\sqrt{\Lambda}^{\rm op}$ is similar to, but notably different from, the $C_2$-twisted category $\Lambda_2^{\rm op}$ in \cite{BHM:93}. In particular, relations (v) and (vi) in \cref{defn:sqrt Lambda} are different than those in $\Lambda_2^{\rm op}$.
\end{remark}

\begin{theorem}\label{thm:inclusion of c2 cyc into cyl}
{The category $\sqrt{\Lambda}^{\rm op}$ is isomorphic to the subcategory of $\cyl_0$ on objects $S^1_k$ for $k>0$, generated by morphisms in \cref{defn: defn of gen} that do not have $S^1_0$ as source or target.}
\end{theorem}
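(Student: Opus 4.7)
The strategy is to extend the inclusion $\Lambda^{\rm op}\hookrightarrow \cyl$ from \cref{thm:cyl and cyclic cat} to a functor $F\colon \sqrt{\Lambda}^{\rm op}\to \cyl_0$ by additionally sending $\sqrt{t_n}\mapsto \tw_{2(n+1)}$, verify that $F$ lands in the described subcategory, and then establish that $F$ is an isomorphism using a normal-form argument.

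To verify $F$ is well-defined, the relations (i)--(iii) of \cref{defn:sqrt Lambda} are the same simplicial relations whose verification is carried out in the proof of \cref{thm:cyl and cyclic cat}. Relation (iv) $\sqrt{t_n}^{2(n+1)}=\id$ becomes $\tw_{2(n+1)}^{2(n+1)}=\id$, which is relation~(8) of \cref{thm: minimal list of generators}. Relations (v) and (vi) only involve $\sqrt{t_n}^2$ rather than $\sqrt{t_n}$ itself, so they reduce to the cyclic relations for $t_n$ under the identification $\sqrt{t_n}^2\mapsto \tw_{2(n+1)}^2$, which was already verified in \cref{thm:cyl and cyclic cat}. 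The image of $F$ lies in the described subcategory since no generator has $S^1_0$ as its source or target, and $F$ restricts to the bijection $[n]\leftrightarrow S^1_{2(n+1)}$ on objects.

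For fullness, the subcategory generators $\id_{2(n+1)}$ and $\tw_{2(n+1)}$ are direct images of $\id_{[n]}$ and $\sqrt{t_n}$, as is one parity of the births $\bi^\bullet_{2(n+1)}$ and deaths $\de^\bullet_{2(n+1)}$. The remaining births and deaths are obtained as twist conjugates: by relations (6) and (7) of \cref{thm: minimal list of generators}, $\bi^{i+1}_k = \tw_{k+2}\circ \bi^i_k\circ \tw_k^{-1}$ and $\de^{i+1}_k = \tw_{k-2}\circ \de^i_k\circ \tw_k^{-1}$, and since $\sqrt{t_n}$ is invertible in $\sqrt{\Lambda}^{\rm op}$ by relation (iv), each such conjugate lifts to an explicit $\sqrt{t}$-conjugate of some $s^j_n$ or $d^i_n$ in $\sqrt{\Lambda}^{\rm op}$.

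The main obstacle is faithfulness. The key observation is that every morphism $C$ in the subcategory has bracelet number $\br(C)=0$: each birth and death arc of a subcategory generator is isotopic to a ``short clockwise'' arc, so no composition can produce a non-contractible loop in the cylinder. Consequently, \cref{thm: unique decomp} gives $C = C_{III}\circ C_{II}\circ C_I$ where $C_{II}$ is a pure power of the twist (no bracelets), and the lifts of $C_I, C_{II}, C_{III}$ back to $\sqrt{\Lambda}^{\rm op}$ are uniquely determined modulo relations (i)--(vi). Concretely, I would define an inverse functor $G$ on subcategory generators (sending $\tw_{2(n+1)}\mapsto \sqrt{t_n}$, one parity of $\bi, \de$ to $s^j, d^i$, and the other parity to the appropriate $\sqrt{t}$-conjugates) and verify well-definedness by checking that the relations of \cref{thm: minimal list of generators} and \cref{cor: relns in cyl} restricted to subcategory morphisms are respected: relations (6), (7) are tautological under $G$'s definition; relations (4), (5) between same-type generators reduce to $\sqrt{\Lambda}^{\rm op}$ relations (i), (ii) after using (v), (vi) to commute twists past births/deaths; relations (1)--(3) between births and deaths reduce similarly to (iii); relation (8) corresponds to (iv). Since $G\circ F$ and $F\circ G$ are both the identity on generators by construction, $F$ is an isomorphism.
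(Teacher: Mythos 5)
Your overall approach matches the paper's, which is extremely terse on this point: it simply asserts that the assignment $[n]\mapsto S^1_{2(n+1)}$, $\sqrt{t_n}\mapsto\tw_{2(n+1)}$ extends to an inclusion whose image is the stated subcategory, and leaves the verification implicit. Your work on well-definedness and fullness (conjugating the even-indexed births and deaths by powers of $\tw$ to reach the odd-indexed ones) is a reasonable expansion of what the paper leaves unsaid.

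Your faithfulness argument, however, has a genuine gap, and the supporting claim about bracelets is not correctly justified. On the bracelet claim: the bracelet generator $\de^1_2\circ\bi^0_0$ is \emph{also} built from arcs isotopic to the short clockwise arc — every generator has this property by \cref{defn: defn of gen} — so ``arcs are short clockwise, hence no non-contractible loops'' is a non-sequitur. The correct reason $\br(C)=0$ (which does appear to hold) is the combinatorial one from the proof of \cref{lem: type II from invariants}: a non-contractible loop can only arise when a death is blocked from moving past a birth, and the only such blocking configurations are $\de^1_2\circ\bi^0_0$ and $\de^0_2\circ\bi^1_0$, which require $S^1_0$ as source or target.

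More seriously, the normal form of \cref{thm: unique decomp} can factor through $S^1_0$ even when $C$ is a subcategory morphism. For example, $\de^2_4\circ\bi^0_2\colon S^1_2\to S^1_2$ is a composite of two subcategory generators but has $\tau(C)=0$ (both source points form a cap, both target points form a cup, with no through strings), so its decomposition is $C_{III}\circ C_{II}\circ C_I$ with $C_I\colon S^1_2\to S^1_0$ and $C_{III}\colon S^1_0\to S^1_2$. These factors have no lift to $\sqrt{\Lambda}^{\rm op}$, so the sentence ``the lifts of $C_I, C_{II}, C_{III}$ back to $\sqrt{\Lambda}^{\rm op}$ are uniquely determined'' does not apply as stated. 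Your fallback plan — define $G$ on subcategory generators and check that the restricted relations are respected — is the right instinct, but it skips a step: for $G$ to be well-defined on the subcategory, you need every equality in $\cyl_0$ between two words in subcategory generators to be realizable by a chain of relations that \emph{stays inside} words in subcategory generators, and this is not automatic (a chain from \cref{thm: minimal list of generators} might pass through intermediate expressions involving $\bi^i_0$ or $\de^i_2$). To close the gap you would need to either prove such chains can be rerouted to avoid $S^1_0$, or establish an intrinsic normal form for morphisms of $\sqrt{\Lambda}^{\rm op}$ and show directly that $F$ is injective on it. Neither your proposal nor the paper's one-line argument addresses this.
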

\begin{proof}
Just as in \cref{thm:cyl and cyclic cat}, the assignment $[n]\mapsto S^1_{2n+1}$ for $n\geq 1$ on objects extends to an inclusion which is an isomorphism onto its image. The image is generated by the generators of $\cyl_0$ (\cref{cor:cyl rep data}) except for the ones mentioned above, namely $\de^i_2$, and $\mathbf{b}^i_0$ for $i=0,1$.
\end{proof}

This identification gives us an explicit way to build $\cyl_0$-objects from $\sqrt{\Lambda}^{\rm op}$-objects. Indeed, suppose $X$ is a $\sqrt{\Lambda}^{\rm op}$-object. We can extend $X$ to a $\cyl_0$-object $Y$ by setting $Y(S^1_{2n}) = X_{n-1}$ for $n\geq 1$. The only data missing is a choice of $X_{-1}:=Y(S^1_0)$, along with the structure maps $Y(\de^i_2)\colon X_0\to Y(S^1_0)$ for $i=0,1$ and $Y(\bi_0^i)\colon Y(S^1_0)\to X_0$ which satisfy:\begin{itemize}
    \item $Y(\de^i_2)\circ Y(\bi^i_0) = \id$ for $i=0,1$,
    \item $Y(\de^0_2) \circ \sqrt{t_1} =Y(\de^1_2)$ and $\sqrt{t_1} \circ Y(\bi^0_0)  = Y(\bi^1_0)$. 
\end{itemize}
In particular, it is enough to specify a map $Y(\de^0_2)\colon X_0\to Y(S^1_0)$ with a section $Y(\bi_0^0)$. Note that there is no condition on the composition $Y(\bi^i_0)\circ Y(\de^i_2)$.

\begin{corollary}\label{cor:cyl0 obj gives c2 cyclic}
    A $\cyl_0$-object is specified by the data of a functor $X\colon \sqrt{\Lambda}^{\rm op}\to \cat C$ along with an object $X_{-1}\in \cat C$ and a choice of augmentation $X_0\to X_{-1}$ with a section $X_{-1}\to X_0$. 
\end{corollary}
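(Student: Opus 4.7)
The plan is to produce the correspondence by invoking the identification in \cref{thm:inclusion of c2 cyc into cyl} and then analyzing exactly which generators and relations of $\cyl_0$ are not already accounted for by $\sqrt{\Lambda}^{\rm op}$. By that theorem, a functor $X\colon \sqrt{\Lambda}^{\rm op}\to \cat C$ is equivalent to a functor on the full subcategory of $\cyl_0$ on objects $\{S^1_{2k}\}_{k\geq 1}$, generated by everything in \cref{defn: defn of gen} except the four generators whose source or target is $S^1_0$. Consequently every $\cyl_0$-object $Y$ restricts to such an $X$, while conversely an extension of $X$ to $Y$ requires only the object $X_{-1}:=Y(S^1_0)$ together with values of $Y$ on $\bi^0_0, \bi^1_0\colon S^1_0\to S^1_2$ and $\de^0_2, \de^1_2\colon S^1_2\to S^1_0$, subject to the $\cyl_0$-relations these satisfy.

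The next step is to cut this data in half using the twist. Setting $\sqrt{t_0}:=Y(\tw_2)$, which is part of the data of $X$ and satisfies $\sqrt{t_0}^2=\id$ by relation (iv) of \cref{defn:sqrt Lambda}, the twist relations (6) and (7) of \cref{thm: minimal list of generators} force
\[ Y(\bi^1_0) = \sqrt{t_0}\circ Y(\bi^0_0) \qquad\text{and}\qquad Y(\de^1_2) = Y(\de^0_2)\circ \sqrt{t_0}. \]
Defining $Y(\bi^1_0)$ and $Y(\de^1_2)$ by these formulas eliminates them from the free data; what remains is the object $X_{-1}$, an augmentation $\delta:=Y(\de^0_2)\colon X_0 \to X_{-1}$, and a section candidate $\sigma:=Y(\bi^0_0)\colon X_{-1}\to X_0$, exactly as claimed in the statement.

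The final step is to verify that these formulas satisfy every remaining $\cyl_0$-relation involving $S^1_0$. The relevant relations are the contractible-circle and snake relations (1) and (2) of \cref{thm: minimal list of generators} at $k=0$, giving the identities $\de^i_2\circ\bi^j_0=\id$ for $i,j\in\{0,1\}$. Substituting the formulas above and invoking $\sqrt{t_0}^2=\id$, these four equations collapse to the section condition $\delta\sigma=\id$, since the $\bi\cdot\de$ compositions remain unconstrained (no such relation is imposed by the $\cyl_0$ presentation). I expect the main care-point of the argument to be confirming that no other $\cyl_0$-relation silently involves $S^1_0$ beyond what has been treated; this is handled by the unique normal form of \cref{thm: unique decomp}, which shows that every morphism of $\cyl_0$ factors uniquely through the generators of \cref{defn: defn of gen}, so that once the above relations hold the extension $Y$ is well-defined and functorial. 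The converse direction—that every $\cyl_0$-object restricts to a $\sqrt{\Lambda}^{\rm op}$-object equipped with the indicated augmentation and section—is immediate from the inclusion of \cref{thm:inclusion of c2 cyc into cyl}.
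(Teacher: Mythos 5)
The proposal follows the paper's approach via \cref{thm:inclusion of c2 cyc into cyl}, isolating the missing $S^1_0$-data and reducing it to an augmentation with a section, and the final conclusion matches the corollary. However, the verification in your final paragraph contains a genuine error. You claim that relations (1) and (2) of \cref{thm: minimal list of generators} at $k=0$ give the identities $\de^i_2\circ\bi^j_0=\id$ for \emph{all} $i,j\in\{0,1\}$. This is wrong for two reasons. First, relation (2) (the snake relation) imposes $\de^i_{k+2}\circ\bi^j_k=\id$ only when $i=j\pm1$ with $0\leq i,j\leq k$; at $k=0$ the index range forces $i=j=0$, so the snake relation is vacuous. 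Second, and more fundamentally, the off-diagonal compositions $\de^1_2\circ\bi^0_0$ and $\de^0_2\circ\bi^1_0$ are precisely the bracelet cobordism of \cref{defn: bracelet} --- a \emph{non-trivial} endomorphism of $S^1_0$ in $\cyl$; see the proof of \cref{lem: type II from invariants}, which explicitly names $\de^1_2\circ\bi^0_0$ as the generator of bracelets. Only the diagonal compositions $\de^i_2\circ\bi^i_0$ equal $\id_0$.

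Consequently the claim that the ``four equations collapse to the section condition $\delta\sigma=\id$'' does not follow as written. Were all four imposed, your twist formulas would yield both $\delta\sigma=\id$ and $\delta\sqrt{t_0}\sigma=\id$, two independent constraints, the second forcing the image of the bracelet to be trivial --- which would contradict the existence of $\cyl_0$-objects in which bracelets act non-trivially. The correct reasoning imposes only the two diagonal relations: $\de^0_2\circ\bi^0_0=\id$ gives $\delta\sigma=\id$, and $\de^1_2\circ\bi^1_0=\id$ is then automatic since $\delta\sqrt{t_0}\cdot\sqrt{t_0}\sigma = \delta\sqrt{t_0}^2\sigma = \delta\sigma=\id$. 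You arrive at the right condition, but by an argument that, read literally, proves a stronger and false statement. Separately, the appeal to the normal form of \cref{thm: unique decomp} to dismiss remaining relations is more delicate than you suggest: relations (3), (4), (5) at their smallest instances (for example $\de_4^0\circ\bi_2^2 = \bi_0^0\circ\de_2^0$ from relation (3) at $k=2$, or $\bi_2^0\circ\bi_0^0=\bi_2^2\circ\bi_0^0$ from (4) at $k=0$) do involve $S^1_0$ and impose genuine compatibility constraints on $\sigma$ and $\delta$; a complete proof should account for these, which neither your argument nor the paper's brief discussion does explicitly.
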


One example of a $\sqrt{\Lambda}^{\rm op}$-object is a variation on the edgewise subdivision $sd(X)$ of a cyclic object $X$. Edgewise subdivision is a general construction on simplicial objects, and its restriction to cyclic objects defines a functor into $\Lambda_2^{\rm op}$-objects (see \cite[Section 1]{BHM:93}). We will describe a similar construction on simplicial objects, called \textit{doubling}, whose restriction to cyclic objects defines a functor into $\sqrt{\Lambda}^{\rm op}$-objects. 

The double of a simplicial object has the same $n$-simplices as the edgewise subdivision, but different maps. Define $\delta\colon \Delta^{\rm op} \to \Delta^{\rm op}$ by $[n]\mapsto [2n+1]$, $d^i_n\mapsto d^{2i}_{2n}\circ d^{2i}_{2n+1}$, and $s^j_n\mapsto s^{2j}_{2n+2}\circ s^{2j}_{2n+1}$.

\begin{lemma}
    The assignment $\delta$ is functorial.
\end{lemma}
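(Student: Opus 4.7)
The plan is to verify that $\delta$ respects each of the three families of defining relations for $\Delta^{\rm op}$. Since the face maps $d^i_n$ and degeneracy maps $s^j_n$ generate $\Delta^{\rm op}$ subject to the simplicial relations recalled as (i)--(iii) in Definition~\ref{defn:cyclic cat}, functoriality of $\delta$ reduces to checking, for each relation $A=B$, that $\delta(A)=\delta(B)$ holds after expanding via the definition of $\delta$ on generators.

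For the $dd$-relation, fix $i<j$. Unpacking the definition gives
\[\delta(d^i_n)\circ\delta(d^j_{n+1})=d^{2i}_{2n}\circ d^{2i}_{2n+1}\circ d^{2j}_{2n+2}\circ d^{2j}_{2n+3}.\]
I would now apply the simplicial $dd$-identity four times. The key inequalities $2i<2j$ and $2i<2j-1$ both follow from $i<j$, so one can first swap the middle pair $d^{2i}_{2n+1}\circ d^{2j}_{2n+2}$, then propagate the swap outward in both directions. After four applications, the expression rewrites as $d^{2j-2}_{2n}\circ d^{2j-2}_{2n+1}\circ d^{2i}_{2n+2}\circ d^{2i}_{2n+3}$, which is exactly $\delta(d^{j-1}_n)\circ\delta(d^i_{n+1})$. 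The $ss$-relation is handled by an entirely parallel argument with the roles of face and degeneracy maps swapped and index shifts reversed.

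For the $ds$-relation I would split into the three cases. The ``sliding'' cases $i<j$ and $i>j+1$ are combinatorially analogous to the $dd$ case: after expanding $\delta$ into a fourfold composition of alternating $d$'s and $s$'s, repeated application of the simplicial $ds$-identity together with the appropriate doubled inequalities ($2i<2j$ resp.\ $2i>2j+2$) rewrites the composition into the required form. The most delicate case is the cancellation $i\in\{j,j+1\}$, where the source relation reads $d^i_{n+1}\circ s^j_n=\id$. Here I need to verify that
\[\delta(d^i_{n+1})\circ\delta(s^j_n)=d^{2i}_{2n+2}\circ d^{2i}_{2n+3}\circ s^{2j}_{2n+2}\circ s^{2j}_{2n+1}\]
collapses to the identity. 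This follows by two successive applications of $d\circ s=\id$ in the target: when $i=j$, the inner pair $d^{2j}_{2n+3}\circ s^{2j}_{2n+2}$ cancels, leaving $d^{2j}_{2n+2}\circ s^{2j}_{2n+1}$, which cancels again; the case $i=j+1$ is handled by first sliding past using the $ds$ relation until one reaches an adjacent cancelling pair.

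The main obstacle is bookkeeping of the doubled indices across the four applications of each simplicial identity, and confirming that the case split in the $ds$-verification lines up correctly with the paired cancellations. No new combinatorial input is required beyond the simplicial identities themselves, so once indexing is checked carefully the entire verification is routine. Functoriality on identities is immediate since the identity morphism has empty presentation.
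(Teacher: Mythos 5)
Your sketch follows the same strategy as the paper---expand each generator through $\delta$ and rewrite with the simplicial identities---and the $dd$, $ss$, and $ds$ (case $i=j$) computations are fine. The gap is in the $i=j+1$ case of the $ds$-relation, which you dispose of with ``first sliding past using the $ds$ relation until one reaches an adjacent cancelling pair.'' When this is actually carried out it does not terminate in the identity. Starting from
\[
\delta(d^{j+1}_{n+1})\circ\delta(s^j_n)=d^{2j+2}_{2n+2}\circ d^{2j+2}_{2n+3}\circ s^{2j}_{2n+2}\circ s^{2j}_{2n+1},
\]
relation (iii) (case $i>j+1$) gives $d^{2j+2}_{2n+3}\circ s^{2j}_{2n+2}=s^{2j}_{2n+1}\circ d^{2j+1}_{2n+2}$; the inner pair $d^{2j+1}_{2n+2}\circ s^{2j}_{2n+1}$ collapses, but the leftover $d^{2j+2}_{2n+2}\circ s^{2j}_{2n+1}$ is in the $i>j+1$ range again and equals $s^{2j}_{2n}\circ d^{2j+1}_{2n+1}\neq\id$. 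A concrete check with $n=0$, $j=0$ gives $\delta(d^1_1)\circ\delta(s^0_0)=d^2_2\circ d^2_3\circ s^0_2\circ s^0_1 = s^0_0\circ d^1_1$, which in $\Delta$ is the non-identity map $[1]\to[1]$ sending both $0$ and $1$ to $0$.

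So the lemma, with $\delta$ defined exactly as in the text, is false, and your proof cannot be completed by the route you describe. The paper's own argument breaks at exactly the same spot: it rewrites $d^{2j+2}_{2n+2}d^{2j+2}_{2n+3}$ as $d^{2j+1}_{2n+2}d^{2j+2}_{2n+3}$ ``by relation (i),'' but relation (i) actually gives $d^{2j+2}_{2n+2}d^{2j+2}_{2n+3}=d^{2j+2}_{2n+2}d^{2j+3}_{2n+3}$, a genuinely different composite (the two compositions omit different pairs of vertices). The statement appears recoverable if $\delta$ on degeneracies is adjusted, e.g.\ to $s^j_n\mapsto s^{2j+2}_{2n+2}\circ s^{2j+1}_{2n+1}$; with that change both the $i=j$ and $i=j+1$ cancellations go through, and the remaining $dd$, $ss$, and sliding $ds$ cases adapt with only minor reindexing along the lines of what you wrote.
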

\begin{proof}
    We need to check that $\delta$ preserves the simplicial relations, i.e. is well-defined. For $i<j$, we check that relation (i) of \cref{defn:cyclic cat} is preserved: \begin{align*}
        \delta(d^i_n)\circ \delta(d^j_{n+1}) &= (d^{2i}_{2n}d^{2i}_{2n+1})\circ(d^{2j}_{2n+2}d^{2j}_{2n+3}) & \\
        &= d^{2j-2}_{2n} \circ (d^{2i}_{2n+1}d^{2i}_{2n+2}) \circ d^{2j}_{2n+3}\\
        &= (d^{2j-2}_{2n}d^{2j-2}_{2n+1})\circ(d^{2i}_{2n+2}d^{2i}_{2n+3})\\
        &= \delta(d_{n}^{j-1})\circ \delta(d^i_{n+1}),
    \end{align*} using the fact that $2i<2j, 2j-1$. Relation (ii) is similar. For relation (iii), we check the cases where $i=j,j+1$, as the other two cases are similar to the argument above. We have\begin{align*}
        \delta(d^i_{n+1})\circ \delta(s^j_{n}) &= (d^{2i}_{2n+2}d^{2i}_{2n+3}) \circ (s^{2j}_{2n+2}s^{2j}_{2n+1})
    \end{align*}
    which is clearly the identity for $i=j$. When $i=j+1$, we have\begin{align*}
        \delta(d^{j+1}_{n+1})\circ \delta(s^j_{n}) &= (d^{2j+2}_{2n+2}d^{2j+2}_{2n+3}) \circ (s^{2j}_{2n+2}s^{2j}_{2n+1})\\
        &= (d_{2n+2}^{2j+1} d_{2n+3}^{2j+2}) \circ (s_{2n+2}^{2j+1}s_{2n+1}^{2j}) & \text{by relations (i) and (ii)},\\
        &= d_{2n+2}^{2j+1} \circ \id_{2n+2}\circ s^{2j}_{2n+1} \\
        &= \id_{2n+1} = \delta(\id_n).
    \end{align*}
\end{proof}

\begin{definition}
    Let $X$ be a simplicial object and define the \textit{double of $X$} to be $db(X):= X\circ \delta$. 
\end{definition}

\begin{proposition}
    If $X$ is a cyclic object, then $db(X)$ is a $\sqrt{\Lambda}^{\rm op}$-object.
\end{proposition}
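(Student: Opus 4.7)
The plan is to exhibit $db(X)$ as the composite $X\circ \tilde{\delta}$, where $\tilde{\delta}\colon \sqrt{\Lambda}^{\rm op}\to \Lambda^{\rm op}$ is a functor extending the $\delta\colon \Delta^{\rm op}\to \Delta^{\rm op}$ from the preceding lemma. On objects and on the face/degeneracy generators, $\tilde{\delta}$ agrees with $\delta$. On the new generator, I set $\tilde{\delta}(\sqrt{t_n}) := t_{2n+1}$. Since the previous lemma already shows that $\delta$ preserves the simplicial relations (i)--(iii), and since $X$ is a cyclic object (hence simplicial), the only work left is to verify that $\tilde{\delta}$ respects the three new relations (iv)--(vi) of $\sqrt{\Lambda}^{\rm op}$ as morphisms in $\Lambda^{\rm op}$.

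Relation (iv) is immediate: $\tilde{\delta}(\sqrt{t_n}^{2(n+1)}) = t_{2n+1}^{2n+2} = t_{2n+1}^{(2n+1)+1} = \id$ by relation (iv) of $\Lambda^{\rm op}$. For relations (v) and (vi), the key observation is that iterating the cyclic relations of $\Lambda^{\rm op}$ yields the ``squared'' identities
\[
t_{m+1}^2\circ s_m^j = s_m^{j+2}\circ t_m^2 \quad\text{and}\quad t_m^2\circ d_{m+1}^i = d_{m+1}^{i+2}\circ t_{m+1}^2.
\]
Because $\delta$ sends $s_n^j$ and $d_n^j$ to compositions involving only the even-indexed generators $s^{2j}$ and $d^{2j}$, applying these squared identities once to each factor of $\delta(s_n^j) = s_{2n+2}^{2j}\circ s_{2n+1}^{2j}$ (respectively $\delta(d_n^i) = d_{2n}^{2i}\circ d_{2n+1}^{2i}$) precisely advances each superscript by $2$, which is what relations (v) and (vi) of $\sqrt{\Lambda}^{\rm op}$ require under $\tilde{\delta}$.

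After these three verifications, $\tilde{\delta}$ is a well-defined functor and $db(X) = X\circ \tilde{\delta}$ is a $\sqrt{\Lambda}^{\rm op}$-object whose underlying simplicial object is the one described in the previous lemma, and whose new automorphism $db(X)(\sqrt{t_n})$ on $db(X)_n = X_{2n+1}$ is $X(t_{2n+1})$. There is no genuine obstacle here: the construction is dictated entirely by the combinatorics of the indexing, and the only subtlety is noticing that because $\delta$ lands in the ``even-index'' subcomplex, each application of relation (v) or (vi) in $\Lambda^{\rm op}$ matches the shift $j\mapsto j+1$ (resp.\ $i\mapsto i+1$) in $\sqrt{\Lambda}^{\rm op}$ after iterating twice.
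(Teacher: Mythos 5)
Your proof is correct and takes essentially the same approach as the paper: both define a functor $\sqrt{\Lambda}^{\rm op}\to \Lambda^{\rm op}$ that agrees with $\delta$ on faces and degeneracies and sends $\sqrt{t_n}\mapsto t_{2n+1}$, and then verify relations (iv)--(vi). The only cosmetic difference is that the paper first extends $\delta$ to a functor $\Lambda^{\rm op}\to\Lambda^{\rm op}$ sending $t_n\mapsto t_{2n+1}^2$ and observes this factors through $\sqrt{\Lambda}^{\rm op}$, whereas you go directly; your explicit check of relations (v) and (vi) via the ``squared'' cyclic identities is in fact a bit more detailed than what the paper spells out.
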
\begin{proof}
   We can extend $\delta$ to a functor $\Lambda^{\rm op} \to \Lambda^{\rm op}$ by sending $t_n$ to $t^2_{2n+1}$, as this preserves the relation $t_n^{n+1}=\id_n$; the relations (v) and (vi) from \cref{defn:cyclic cat} are also preserved. It suffices to show that this extension of $\delta$ factors as \[
   \Lambda^{\rm op}\hookrightarrow \sqrt{\Lambda}^{\rm op}\to \Lambda^{\rm op}.
   \] We may define the map $\sqrt{\Lambda}^{\rm op}\to \Lambda^{\rm op}$ by $\delta$ on the face and degeneracy maps, and sends the generator $\sqrt{t_n}\colon [n]\to [n]$ to $t_{2n+1}\colon [2n+1]\to [2n+1]$. The relation $\sqrt{t_n}^{2(n+1)}=\id_n$ is preserved, as $\delta(\sqrt{t_{2n+1}}^{2(n+1)}) = t_{2n+1}^{2n+2} = \id_{2n+1}$. This is the claimed factorization of $\delta$.
\end{proof}

\begin{example}
    For instance, if $X=N^{cyc}(R)$ is the cyclic bar construction, then $db(X)_n = N^{cyc}_{2n+1}(R) = R^{\wedge 2n+2}$ has a natural $C_{2n+2}$-action by permuting the factors. The face maps $d^i$ multiply the $2i-1, 2i$, and $2i+1$ factors of $R^{\wedge 2n+2}$ (except for $d^n$ which incorporates the $C_{2n+2}$-action); the degeneracy maps $s^j$ insert the unit into the $2j-1$ and $2j$ factors of $R^{\wedge 2n+2}$ (except for $s^0$ which also incorporates the $C_{2n+2}$-action).
\end{example}

To extend $db(X)$ to a $\cyl_0$-object, there are a few options. Using the notation of \cref{cor:cyl0 obj gives c2 cyclic}, one option is to take $db(X)_{-1}= X_0$ and the structure maps to be the face and degeneracies between $X_0$ and $X_1$; another option is to take $db(X)_{-1}=X_1$ and the structure maps to be identities; a third option (if $\cat C$ has a zero object $*$) is to take $db(X)_{-1}=*$ and the structure maps to be the unique morphisms between $*$ and $db(X)_0 = X_1$.

\subsection{The $\cyl$-bar construction}\label{sec:bar construction}

In this subsection, we introduce an example of a $\cyl^a$-object called the \textit{$\cyl$-bar complex}, which we plan to study this bar construction further in future work. Our construction is inspired by the \textit{$C_2$-twisted cyclic bar complex} \cite[Definition 8.1]{ABGHLM:18}, which is used to construct $C_2$-twisted topological Hochschild homology of a ring spectrum with involution. Rather than taking in involutive ring objects as input, our bar construction is built for dualizable objects.

Suppose that $(\cat C, \otimes, I)$ is a \emph{strict} monoidal category and $X$ is a self-dual object in $\cat C$. This means that there exists an evaluation morphism $\varepsilon: X \otimes X \mapsto I$ and a coevaluation morphism $\eta: I \mapsto X \otimes X$ and these adhere to coherence diagrams (the ``snake relations''). We use $\eta$ and $\varepsilon$ to construct a $\cyl^a$-object in $\calc$ called $B_\bullet^\cyl(X)$.

\begin{definition}\label{defn:bar construction}
Define $B_n^\cyl(X) = X^{\otimes n}$, with $B_0^\cyl(X) = I$, together with maps 
\begin{itemize}
    \item[] $d^i_n:B_n^\cyl(X) \to B_{n-2}^\cyl(X)$ for $i = 0, \dots, {n-1}$;
    \item [] $s^i_n:B_n^\cyl(X) \to B_{n+2}^\cyl(X)$ for $i = 0, \dots, n+1$;
    \item[] $t_n: B_n^\cyl(X) \to B_n^\cyl(X)$
\end{itemize}
defined as follows.

The map $t_n$ is the $C_n$-action that cyclically permutes the factors $X^{\otimes n}$ to the right. The maps $d^i_n$ and $s^i_n$ are defined by means of the evaluation, coevaluation, and $t_n$, as follows:
\[
d^i_n =  \begin{cases}  
\varepsilon \otimes {\rm id} \otimes \dots \otimes {\rm id} & i=0\\
{\rm id} \otimes \dots \otimes \varepsilon \otimes \dots  \otimes {\rm id} & 0<i<{n-2}\\
{\rm id} \otimes \dots \otimes {\rm id} \otimes \varepsilon  & i=n-2\\
 ({\rm id} \otimes \varepsilon \otimes {\rm id} \otimes \dots \otimes {\rm id}) \circ t_n^2 
   & i=n-1\\
\end{cases}
\]

\[
s^i_n  =  \begin{cases}  
\eta \otimes {\rm id} \otimes \dots  \otimes {\rm id} & i=0\\
{\rm id} \otimes \dots \otimes \eta \otimes \dots  \otimes {\rm id} & 0<i\leq{n-1}\\
{\rm id} \otimes \dots   \otimes {\rm id}  \otimes \eta & i=n\\
t_{n+2} \circ ({\rm id} \otimes  \dots   \otimes {\rm id} \otimes \eta)\phantom{xi}
  & i=n+1,\\
\end{cases}
\] where we freely make use of $I$ as a two-sided unit for $\otimes$.
\end{definition}

Note that in order for the relations in \cref{cor:cyl rep data} to be satisfied on the nose, we need $\cat C$ to be a strict monoidal category. The appearance of $t_n^2$ in the description of $d_n^{n-1}$ might be surprising, but it is an artifact of requiring $\de_n^{n-1} = \tw_{n-2}\circ \de_n^{n-2} \circ \tw_n^{-1}$. 
Since we can construct the generators $\de_k^i$ and $\bi_k^j$ from $\de_k^0$ and $\bi_k^0$ respectively by means of conjugating with twists, it would also suffice to only define $d^0_n$, $s^0_n$ and $t_n$.

\begin{theorem}
For any self-dual object $X$, the $\cyl$-bar complex $B_\bullet^{\cyl}(X)$ is a $\cyl^a$-object in $\cat C$.
\end{theorem}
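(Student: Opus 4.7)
The plan is to invoke \cref{cor:cyl rep data} as adapted in \cref{rmk:gens and relns for cyla}: under this generators-and-relations presentation, it suffices to exhibit the data $(t_n, d_n^i, s_n^j, \chi_n)$ on $B_n^\cyl(X) = X^{\otimes n}$ and verify the six relations, with the modified loop identity $d_{n+2}^j \circ s_n^j = \chi_n$ in place of one of the identity cases of relation~(iii). Because $\cat C$ is strict monoidal, no coherence bookkeeping is required: every verification reduces to manipulating $\eta$, $\varepsilon$, and cyclic permutations of tensor factors via the interchange law in $\cat C$.

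I would begin by dispatching the twist relations. Relation~(iv), $t_n^n = \id$, is the definition of the cyclic permutation on $X^{\otimes n}$. Relations~(v) and~(vi) say that the cyclic shift $t_n$ intertwines an insertion (respectively contraction) at position $j$ with the corresponding operation at position $j+1$. For interior indices this is immediate from bifunctoriality of $\otimes$. The boundary formulas $d_n^{n-1} = (\id \otimes \varepsilon \otimes \id \otimes \cdots) \circ t_n^2$ and $s_n^{n+1} = t_{n+2} \circ (\id \otimes \cdots \otimes \id \otimes \eta)$ are designed as the twist-conjugates of $d_n^{n-2}$ and $s_n^n$ forced by (v) and (vi) at the cyclic seam, so those edge instances hold by construction.

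Next, relations~(i) and~(ii) assert that two non-adjacent evaluations (respectively coevaluations) commute after reindexing. When the relevant indices have disjoint supports in the tensor product, this is direct from bifunctoriality of $\otimes$. Whenever the indices approach the cyclic seam, the claim reduces to an interior case by conjugating with $t_n^{\pm 2}$, using the twist relations just verified.

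The substantive step is relation~(iii). The snake cases $i = j \pm 1$ reduce to the two zigzag identities
\[
(\varepsilon \otimes \id_X) \circ (\id_X \otimes \eta) = \id_X, \qquad (\id_X \otimes \varepsilon) \circ (\eta \otimes \id_X) = \id_X,
\]
applied to positions $j-1, j, j+1$ of $X^{\otimes n+2}$ and tensored with identities on the remaining factors. The case $i = j$ is the composite which first inserts two new factors at positions $j, j+1$ via $\eta$ and then immediately contracts them via $\varepsilon$; this produces $\id_{X^{\otimes n}}$ multiplied by the scalar $\varepsilon \circ \eta \in \Hom_{\cat C}(I, I)$, and we set $\chi_n$ to be exactly this endomorphism, thereby realizing the modified relation of \cref{rmk:gens and relns for cyla}. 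The off-diagonal cases $i < j-1$ and $i > j+1$ are once more disjoint-support arguments via bifunctoriality. The main obstacle throughout is careful position bookkeeping near the cyclic seam, where the $t_n^{\pm 2}$ corrections appear in $d_n^{n-1}$ and $s_n^{n+1}$; the cleanest way to handle these is to observe that every seam generator is, by definition, a twist-conjugate of an interior generator, so its relations follow formally from the interior cases and the already-verified twist relations, with no fresh verification required at the seam.
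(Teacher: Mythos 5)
Your proposal is correct and takes essentially the same approach as the paper: verify the generators-and-relations data of \cref{rmk:gens and relns for cyla}, note that most relations are bookkeeping under strict monoidality, and identify the substantive cases of relation (iii) as the snake identities for $i=j\pm1$ and the Euler characteristic $\varepsilon\circ\eta$ defining $\chi_n$ for $i=j$. Your explicit reduction of the seam generators $d_n^{n-1}$, $s_n^{n+1}$ to interior cases via twist-conjugation is a helpful unpacking of what the paper compresses into ``straightforward to check.''
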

\begin{proof}
We claim that the assignment $S^1_n \mapsto B_n^\cyl(X)$, $\de^i_n \mapsto d^i_n$, $\bi^i_n \mapsto s^i_n$, $\tw_n \mapsto t_n$ satisfies the relations described in \cref{rmk:gens and relns for cyla}.
Most of the relations rely on keeping track of the factors and are straightforward to check. The only non-trivial relation is (iii) in the case when $i=j-1,j,j+1$. The fact that $d_{n+2}^{j-1}\circ s_n^j = \id = d_{n+2}^{j+1}\circ s_n^j$ is precisely the snake relations on evaluation and coevaluation. The composition $d_{n+2}^j\circ s_n^j$ is $\varepsilon \circ \eta\colon I\to I$ (the Euler characteristic or categorical dimension of the object $X$), which commutes with all other maps, so we can send $(id_n, 1)$ to $\varepsilon \circ \eta \otimes id_{B_n^\cyl(X)}$.
\end{proof}

As a more concrete example, consider $\cat C = \Vect_k$, the category of vector spaces over a field $k$. If $V$ is a finite-dimensional vector space, then a choice of an inner product on $V$ defines an isomorphism $V\xrightarrow{\cong} V^*$, and there are evaluation $\varepsilon\colon V \otimes V \mapsto I$ and coevaluation $\eta\colon I \mapsto V \otimes V$ maps that adhere to coherence relations. We can choose a basis $\{e_i \}$ for $V$ to make the associator and unitors be the identity morphism. In this case the evaluation is given by $\varepsilon (x_1, x_2) = \langle x_1, x_2 \rangle$, and $\eta (1) = \sum_i e^*_i \otimes e_i$, where $e^*_i$ is defined by $e^*_i (x) = \langle e_i, x \rangle$.

In this case, we have $B_n^\cyl(V) = V^{\otimes n}$, with $B_0^\cyl(V) = k$, together with maps $t_n \colon B_n^\cyl(V) \to B_{n}^\cyl(V)$,
$d^i_n\colon B_n^\cyl(V) \to B_{n-2}^\cyl(V)$ for $i = 0, \dots, {n-1}$, and maps 
$s^j_n: B_n^\cyl(V) \to B_{n+2}^\cyl(V)$ for $j = 0, \dots, n+1$ given on simple tensors by
\begin{flalign*}
   \hspace{1.4cm} &t_n (x_0 \otimes \dots  \otimes x_{n-1}) =   x_{n-1} \otimes x_0 \otimes \dots \otimes x_{n-2},
&
\end{flalign*}

\[
d^i_n (x_0 \otimes \dots  \otimes x_{n-1}) =  \begin{cases}  
\varepsilon(x_0, x_1) \otimes x_2 \otimes \dots  \otimes x_{n-1} & i=0\\
x_0 \otimes \dots \otimes \varepsilon(x_i, x_{i+1}) \otimes \dots  \otimes x_{n-1} & 0<i<{n-2}\\
x_0 \otimes \dots \otimes x_{n-3} \otimes  \varepsilon(x_{n-2}, x_{n-1}) & i=n-2\\
 x_{n-2} \otimes \varepsilon(x_{n-1}, x_0) \otimes x_1 \otimes \dots \otimes x_{n-3}  & i=n-1\\
\end{cases}
\]
and
\[
\hspace{0.4cm}  s^j_n (x_0 \otimes \dots  \otimes x_{n-1}) =  \begin{cases}  
\sum_i  e^*_i \otimes e_i \otimes x_0 \otimes \dots  \otimes x_{n-1} & j=0\\
\sum_i x_0 \otimes \dots \otimes  e^*_i \otimes e_i \otimes x_j \otimes \dots  \otimes x_{n-1} & 0<j\leq{n-1}\\
\sum_i x_0 \otimes \dots   \otimes x_{n-1}  \otimes  e^*_i \otimes e_i & j=n\\
\sum_i  e_i \otimes x_0 \otimes \dots   \otimes x_{n-1}  \otimes e^*_i
  & j=n+1.\\
\end{cases}
\]
The map $\chi_0 \colon k \xrightarrow{\eta} V\otimes V\xrightarrow{\varepsilon} k$ is multiplication by $\dim(V)$ and $\chi_n=\chi_0\cdot \id_n$.

\appendix
\section{Stratified Morse theory background}\label{sec: SMT}
The following is a summary of stratified Morse theory definitions and results. This material is drawn from \cite{SMT88}. Let $\cals$ be a partially ordered set; it will index the strata of the space $Z$.

\begin{definition}[\cite{SMT88}, I.1.1]\label{def: S-decomp}
An \emph{$\cals$-decomposition} of a topological space $Z$ is a locally finite collection of disjoint locally closed subsets $S_i\subset Z$ for each $i\in \cals$, such that
\begin{enumerate}
    \item $Z=\bigcup_{i\in\cals} S_i$
    \item $S_i\cap \overline{S_j}\neq \emptyset \Leftrightarrow S_i\subset\overline{S_j} \Leftrightarrow i=j$ or $i<j$
\end{enumerate}
\end{definition}
Let $Z$ be a closed subset of a smooth manifold $M$, and suppose $Z$ has an $\cals$-decomposition.

\begin{definition}[\cite{SMT88}, I.1.2]\label{def: Whit strat}
    The $\cals$-decomposition of $Z$ is a \emph{Whitney stratification} of $Z$ provided:
    \begin{enumerate}
        \item Each piece $S_i$ is a locally closed smooth submanifold (may or may not be connected) of $M$.
        \item Whenever $S_\alpha<S_\beta$ then the pair satisfies Whitney's conditions (a) and (b): suppose $x_i\in S_\beta$ is a sequence of points converging to some $y\in S_\alpha$. Suppose $y_i\in S_\alpha$ also converges to $y$, and suppose that the secant lines $l_i=\overline{x_i y_i}$ converge to some limiting line $l$, and the tangent planes $T_{x_i}S_\beta$ converge to some limiting plane $\tau$. Then
            \begin{enumerate}
                \item $T_y S_\alpha\subset \tau$ and
                \item $l\subset \tau$
            \end{enumerate}
    \end{enumerate}
\end{definition}

\begin{remark}
    Note that (2b) implies (2a).
\end{remark}

Fix a Whitney stratification of a subset $Z$ of a smooth manifold $M$. 
Suppose $p\in Z$ and let $S$ be the stratum of $Z$ which contains $p$.
\begin{definition}[\cite{SMT88}, I.1.8]\label{def: gen tangent space}
    A \emph{generalized tangent space} $Q$ at the point $p$ is any plane of the form $$Q=\lim_{p_i\to p} T_{p_i}R$$
    where $R>S$ is a stratum of $Z$ and $p_i\in R$ is a sequence converging to $p$. 
\end{definition}

Goresky--MacPherson define analogs of smooth functions, critical points and Morse functions, for the stratified setting. Then analogs of the main theorems for Morse theory will apply in the stratified setting as well.

\begin{definition}[\cite{SMT88}, I.2.1]\label{def: crit pt}
Fix a Whitney stratification of $Z\subset M$. Consider a smooth function $\Tilde{f}: M\to \bbr$ and its restriction $f:=\Tilde{f}|_Z:Z\to \bbr$.
A \emph{critical point of $f$} is any point $p\in S$ such that $d\Tilde{f}(p)|_{T_pS}=0$, where $S$ is the stratum of $Z$ containing $p$.

The corresponding critical value $v=f(p)$ is \emph{isolated} if there exists an $\epsilon>0$ such that $f^{-1}[v-\epsilon, v+\epsilon]$ contains no critical points other than $p$.
\end{definition}

\begin{definition}[\cite{SMT88}, I.2.1]\label{def: strat Morse func}
    A \emph{ (stratified) Morse function} $f:Z\to \bbr$ is the restriction of a smooth function $\Tilde{f}: M\to \bbr$ such that
    \begin{enumerate}
        \item $f$ is proper and the critical values of $f$ are distinct.
        \item For each stratum $S$ of $Z$, the critical points of $f|_S$ are nondegenerate.
        \item For every such critical point $p\in S$ and for each generalized tangent space $Q$ at $p$, $d\Tilde{f}_p(Q)\neq 0$ except for the single case $Q=T_pS$
    \end{enumerate}
\end{definition}

\begin{remark}
    Note that the critical points of a Morse function are isolated. 
\end{remark}

\begin{remark}
    Some intuition behind the definition of stratified Morse function: conditions (1) and (2) mean that the restriction of $f$ to each stratum of $Z$ is Morse in the classical sense. Condition (2) is a nondegeneracy requirement in the tangential directions to $S$, while condition (3) ensures that a critical point of the stratum $S$ is not also a limiting critical point for a higher stratum.
\end{remark}

\begin{theorem}[\cite{SMT88}, Theorem 2.2.1]\label{thm: SMT dense in smooth maps}
    Let $Z$ be a closed Whitney stratified subanalytic subset of an analytic manifold $M$. Then the functions $\Tilde{f}:M\to \bbr$ whose restriction $f:=\Tilde{f}|_Z$ are Morse functions form an open and dense subset of the space $C^\infty_p(M, \bbr)$ of smooth proper maps on $M$.
\end{theorem}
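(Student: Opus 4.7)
The plan is to prove openness and density separately in the Whitney $C^\infty$ topology on $C^\infty_p(M,\bbr)$, following the approach of \cite{SMT88} and reducing each condition in \cref{def: strat Morse func} to a transversality statement.

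\textbf{Openness.} Properness is preserved under $C^0$-small perturbations supported in a compact set. The nondegeneracy condition~(2) is $C^2$-open, and distinctness of critical values is $C^0$-open once critical points are isolated. The generalized-tangent-space condition~(3) is the only one requiring comment: for any critical point $p$ of $f|_S$, the set of generalized tangent spaces $Q$ at $p$ is a closed (hence compact) subset of the appropriate Grassmannian by Whitney condition~(a), and there are finitely many critical points in any compact set by properness; hence the minimum of $\|d\tilde f_p|_Q\|$ over the finite union of these compact sets is positive and survives any sufficiently $C^1$-small perturbation.

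\textbf{Density.} I would exhaust $M$ by compact sets $\{K_n\}$ and argue via a Baire category/jet-transversality argument that on each $K_n$ the functions satisfying each of (1)--(3) form an open dense subset, then intersect.  Condition~(1) is standard.  Condition~(2), stratum by stratum, is the classical Morse density theorem applied to $\tilde f|_S$: the $1$-jet bundle $J^1(S,\bbr)$ contains the zero section of $T^*S$ as a smooth submanifold, and by Thom transversality the set of $\tilde f$ whose restriction is transverse to it is open dense; distinctness of critical values across strata is then a further residual condition (countable intersection, one per pair of strata, each open dense).  The subanalytic hypothesis enters here to guarantee the stratification is locally finite so that only countably many such conditions need to be imposed.

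\textbf{Main obstacle and resolution.} The serious step is condition~(3): a perturbation of $\tilde f$ must avoid, for each pair of strata $S<R$ and each critical point $p\in S$ of $f|_S$, every generalized tangent space $Q=\lim T_{p_i}R$ arising as $p_i\in R$ tends to $p$.  The idea is to assemble the set of all such triples $(p,Q)$ into a single "bad locus" $\Sigma_{S,R}$ sitting inside a Grassmannian bundle over a neighborhood of $S$ in $\overline R$; Whitney condition~(b), together with subanalyticity of $Z$, ensures that $\Sigma_{S,R}$ is itself a subanalytic set whose dimension is strictly less than that of $R$.  Then the condition "$d\tilde f$ vanishes on some $Q\in \Sigma_{S,R}$ over a critical point on $S$" cuts out a subanalytic subvariety of positive codimension in the relevant jet bundle, and a version of Thom's jet transversality theorem (applicable in the subanalytic setting) shows that a residual set of $\tilde f$ avoids it.  Combining the conditions over the locally finite family of stratum pairs and the exhaustion $\{K_n\}$ yields a residual (hence dense) set of stratified Morse functions, completing the proof.
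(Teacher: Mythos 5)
You should first note that the paper does not actually prove \cref{thm: SMT dense in smooth maps}: it is imported verbatim from Goresky--MacPherson \cite{SMT88} (Theorem 2.2.1), where the proof is in turn essentially due to Pignoni, so your proposal can only be measured against that source. Your outline does reproduce the architecture of that proof (openness via compactness of the limiting planes, density via a jet/transversality argument in which the degenerate locus is shown to be subanalytic of deficient dimension), but two steps are genuinely incomplete as written.

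For openness, your lower bound on $\|d\tilde f_p|_Q\|$ is taken only at the critical points of the \emph{unperturbed} $f$. A $C^2$-small perturbation moves the critical points of $f|_S$, and condition (3) of \cref{def: strat Morse func} must be verified at the \emph{new} critical points, whose sets of generalized tangent spaces are a priori different. What is missing is the observation that the incidence set $\{(q,Q): Q=\lim T_{p_i}R,\ p_i\to q\}$ is closed in the Grassmann bundle, so that $q\mapsto \min_Q \|d\tilde f_q|_Q\|$ is lower semicontinuous and positivity at $p$ yields a uniform bound on a neighborhood of $p$, which then survives small perturbations and covers the displaced critical points; one must also run this over an exhaustion of $M$ in the strong topology, since there may be infinitely many critical points globally (finiteness on compacta comes from nondegeneracy, not properness alone). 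For density, the crux is exactly where you leave an assertion: the dimension drop for $\Sigma_{S,R}$ comes from the subanalytic frontier theorem applied to the closure of the tautological lift $\{(r,T_rR)\}$ (frontier of a subanalytic set has strictly smaller dimension), not from Whitney condition (b); and the decisive codimension count lives in covector space rather than in the Grassmann bundle. By Whitney condition (a) one has $T_pS\subset Q$, so the degenerate covectors form a set $D_S\subset T^*_SM$ fibered over $\Sigma_{S,R}$ with fibers of dimension $\dim M-\dim R$, giving $\dim D_S\leq \dim M-1$; since $df\colon M\to T^*M$ is a section into a space of dimension $2\dim M$, transversality to a Whitney stratification of $D_S$ forces $df(M)\cap D_S=\emptyset$, which is condition (3). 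Without this fiberwise count, ``positive codimension in the relevant jet bundle'' does not suffice: you need codimension large enough that a generic section \emph{misses} the bad locus entirely, not merely meets it transversally.
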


\begin{definition}[\cite{SMT88}, I.2.3]
   Let $Z$ be a Whitney stratified subanalytic subset of an analytic manifold $M$. Let $f: Z\to \bbr$ be the restriction of a smooth function $\Tilde{f}: M\to \bbr$, and let $p\in Z$ be a critical point of $f$ contained in the stratum $S$ of $Z$.
   The critical point $p\in Z$ is \emph{nondepraved} if:
   \begin{enumerate}
       \item the critical point $p$ is isolated,
       \item the restriction $f|_S$ has a nondepraved critical point at $p$ (I.e. let $p_i$ be a sequence of points converging to $p$; suppose the vectors $v_i=\frac{(p_i-p)}{|p_i-p|}$ converge to some limiting vector $v$; suppose the subspaces $\ker df(p_i)$ converge to some limiting subspace $\tau$; suppose that $v\notin \tau$. Then for all $i$ sufficiently large, $df(p_i)(v_i)\cdot (f(p_i)-f(p))>0$.), and
       \item for each generalized tangent space $Q$ at $p$, $d\Tilde{f}(p)(Q)\neq 0$ except for the single case $Q=T_pS$.
   \end{enumerate}
\end{definition}

Just as in classical Morse theory, one of the main theorems of stratified Morse theory describes how the topology of the stratified space $Z$ changes as one moves past critical points of $Z$.

Fix $\epsilon>0$ so that the interval $[v-\epsilon, v+\epsilon]$ contains no critical values of $f$ other than $v=f(p)$.
\begin{definition}[\cite{SMT88}, I.3.3]
    A pair $(A,B)$ of $\cals$-decomposed spaces is \emph{Morse data} for $f$ at $p$ if these is an embedding $h:B\to Z_{\leq v-\epsilon}$ such that $Z_{\leq v+\epsilon}$ is homeomorphic to the space $Z_{\leq v-\epsilon}\cup_B A$, where the homeomorphism preserves the $\cals$-decompositions.
\end{definition}

Suppose $f:Z\to \bbr$ is proper and the critical value $v=f(p)$ is isolated.
\begin{definition}[\cite{SMT88}, I.3.4]
    The \emph{coarse Morse data} for $f$ at $p$ is the pair of $\cals$-decomposed spaces
    \[ (A,B):= (Z\cap f^{-1}[v-\epsilon, v+\epsilon], Z\cap f^{-1}(v-\epsilon)), \]
    where $\epsilon>0$ is any number such that the interval $[v-\epsilon, v+\epsilon]$ contains no critical values other than $v=f(p)$.
\end{definition}

\begin{definition}[\cite{SMT88}, Definition 3.5.2]\label{def: local Md}
    Choose a $\delta>0$ such that $\partial B_\delta^M(p)$ is transverse to all the strata in $Z$ and none of the critical points of $f|_{B_\delta}$ have critical value $v$, except for the critical point $p$ (i.e. for any stratum $S\subset B_\delta$ and for any critical point $q$ of $f|_S$, $f(q)\neq v$ unless $q=p$); note that such a $\delta$ exists by Lemma 3.5.1 of \cite{SMT88}.
    The \emph{local Morse data} for $f$ at $p$ is the coarse Morse data for $f|_{B_\delta}$ at $p$, i.e. the pair
    \[ (B_\delta\cap f^{-1}[v-\epsilon, v+\epsilon], B_\delta\cap f^{-1}(v-\epsilon)). \]
\end{definition}

\begin{theorem}[\cite{SMT88}, Theorem 3.5.4]
    If $v=f(p)$ is an isolated critical value, then the local Morse data for $f$ at $p$ is Morse data. In other words, choosing an $\epsilon$ where $v$ is the only critical value in the interval $[v-\epsilon, v+\epsilon]$, then $Z_{\leq v+\epsilon}$ is obtained as a topological space from $Z_{\leq v-\epsilon}$ by attaching the space $A$ along the space $B$ (where $A, B$ are as in Definition~\ref{def: local Md}).
\end{theorem}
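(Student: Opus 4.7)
The plan is to show that the local Morse data $(A,B) = (B_\delta(p)\cap f^{-1}[v-\epsilon,v+\epsilon], B_\delta(p)\cap f^{-1}(v-\epsilon))$ models the global change $Z_{\leq v-\epsilon}\rightsquigarrow Z_{\leq v+\epsilon}$ by constructing a deformation retraction that compresses the complement of $B_\delta(p)$ onto a level set, then gluing in the local piece. Since $v$ is the only critical value in $[v-\epsilon,v+\epsilon]$ and $p$ is the only critical point of $f$ on $Z\cap f^{-1}(v)$, after possibly shrinking $\epsilon$ we may assume that every critical point $q$ of $f|_S$ (for any stratum $S$) with $f(q)\in[v-\epsilon,v+\epsilon]$ already lies inside $B_\delta(p)$. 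Thus the set $W := (Z\cap f^{-1}[v-\epsilon,v+\epsilon])\setminus \mathrm{int}(B_\delta(p))$ contains no critical points of $f$ at all, and it is here that we produce our flow.

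The key step is to build a continuous, stratum-preserving vector field $\xi$ on $W$ which is smooth on each stratum and satisfies $df(\xi) \equiv 1$ on each stratum, while also being compatible with a system of tubular neighborhoods in the sense of Mather-Thom. First I would fix a system of control data $\{(T_S,\pi_S,\rho_S)\}$ for the Whitney stratification (available by the standard theory, cf.\ the appendix of \cite{SMT88}). On the top-dimensional stratum containing a portion of $W$ one can define $\xi$ using a partition of unity to glue together local choices of gradient-like lifts of $\partial/\partial t$ for $f$. One then extends stratum-by-stratum, in decreasing order of dimension, by requiring $\xi$ to be tangent to each stratum and to project correctly under the $\pi_S$'s near each lower stratum; the no-critical-points hypothesis on $W$ guarantees that at each inductive step such a lift of $\partial/\partial t$ exists locally, and partitions of unity let us globalize. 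Because $\partial B_\delta(p)$ is transverse to every stratum, we can arrange $\xi$ to be tangent to $\partial B_\delta(p)\cap W$ near this ``inner'' boundary, so that the flow does not cross into $B_\delta(p)$.

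Once $\xi$ is constructed, Thom's first isotopy lemma (applied to the proper stratified submersion $f|_W\colon W\to [v-\epsilon,v+\epsilon]$) shows that integration of $\xi$ produces a stratified homeomorphism
\[
\Phi\colon W \xrightarrow{\;\cong\;} \bigl((Z\cap f^{-1}(v-\epsilon))\setminus \mathrm{int}(B_\delta(p))\bigr)\times[v-\epsilon,v+\epsilon],
\]
trivializing the fibration over the interval. In particular, $\Phi$ restricts to a stratum-preserving homeomorphism between $Z_{\leq v+\epsilon}\setminus \mathrm{int}(B_\delta(p))$ and $(Z_{\leq v-\epsilon}\setminus \mathrm{int}(B_\delta(p)))\cup (\partial \bar{B}_\delta(p)\cap f^{-1}[v-\epsilon,v+\epsilon])$; composing with the ``sliding'' retraction along the product direction deforms the $+\epsilon$-sublevel outside $B_\delta(p)$ down to the $(-\epsilon)$-sublevel outside $B_\delta(p)$, fixing $\partial B_\delta(p)\cap f^{-1}(v-\epsilon)$ pointwise.

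To finish, glue: the space $Z_{\leq v+\epsilon}$ decomposes as the union of $A = B_\delta(p)\cap f^{-1}[v-\epsilon,v+\epsilon]$ with $Z_{\leq v+\epsilon}\setminus \mathrm{int}(B_\delta(p))$ along the common piece $\partial B_\delta(p)\cap f^{-1}[v-\epsilon,v+\epsilon]$. Applying the stratified deformation above to the outer piece collapses it onto $Z_{\leq v-\epsilon}$ while identifying its trace with $B = B_\delta(p)\cap f^{-1}(v-\epsilon)$ via the embedding $h\colon B\hookrightarrow Z_{\leq v-\epsilon}$ induced by the flow. This exhibits $Z_{\leq v+\epsilon}\cong Z_{\leq v-\epsilon}\cup_B A$ as stratified spaces, which is precisely the assertion that $(A,B)$ is Morse data.

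I expect the main obstacle to be the stratum-by-stratum construction of the controlled vector field $\xi$ in Step~2: one must simultaneously ensure gradient-likeness on every stratum, tangency to strata, compatibility with the control data $(\pi_S,\rho_S)$ (so that integration yields a genuine stratum-preserving flow rather than merely a set-theoretic one), and tangency to $\partial B_\delta(p)$. This is the technical heart of Goresky-MacPherson's argument and ultimately relies on Mather-style gluing lemmas for controlled lifts; once it is in hand, Thom's first isotopy lemma delivers the conclusion essentially for free.
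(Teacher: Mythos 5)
You should first note that the paper does not prove this statement at all: it is Theorem 3.5.4 of Goresky--MacPherson, quoted in the background appendix with a citation to \cite{SMT88}, so there is no in-paper argument to compare against. Measured against the source, your outline follows the expected route: Goresky--MacPherson's proof runs through their ``moving the wall'' technique, which is itself implemented by exactly the Thom--Mather controlled vector field machinery you invoke, so the architecture (trivialize $W=Z\cap f^{-1}[v-\epsilon,v+\epsilon]\setminus \mathrm{int}\,B_\delta(p)$ by a controlled lift of $\partial/\partial t$, then glue in $A$ along $B$ and the side wall) is essentially theirs in a more hands-on form.

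There is, however, a genuine gap at the step you pass over with ``because $\partial B_\delta(p)$ is transverse to every stratum, we can arrange $\xi$ to be tangent to $\partial B_\delta(p)\cap W$.'' Transversality of the sphere to the strata of $Z$ is not sufficient for this. To have a vector field tangent to a boundary stratum $S\cap \partial B_\delta(p)$ with $df(\xi)=1$ you need $f$ restricted to that boundary stratum to have no critical points with values in $[v-\epsilon,v+\epsilon]$ (equivalently, the pair $(f,r)$, $r$ the distance to $p$, must have no characteristic covectors along the side wall). Since $f$ restricted to these compact sphere strata certainly has critical points somewhere, this condition is only achievable by choosing the constants in the right order: fix $\delta$ first, then take $\epsilon$ sufficiently small depending on $\delta$ ($0<\epsilon\ll\delta$), which is precisely how the choice is made in \cite{SMT88} and is implicitly what the phrase ``critical points of $f|_{B_\delta}$'' in Definition~\ref{def: local Md} must cover. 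Your ``after possibly shrinking $\epsilon$'' only addresses interior critical points of the strata of $Z$ (which are already excluded by the definition of isolated critical value, cf.\ Definition~\ref{def: crit pt}) and does nothing about critical points of $f$ on the sphere strata; without the corner condition the flow cannot be kept tangent to the inner boundary, the product structure on $W$ fails, and the gluing argument collapses. Secondarily, the intermediate identification of $Z_{\leq v+\epsilon}\setminus \mathrm{int}\,B_\delta(p)$ with $(Z_{\leq v-\epsilon}\setminus \mathrm{int}\,B_\delta(p))\cup(\partial B_\delta(p)\cap f^{-1}[v-\epsilon,v+\epsilon])$ and the final collapse are stated loosely (the target of the final homeomorphism should be the subspace $Z_{\leq v-\epsilon}\cup A\subset Z$, which coincides with the adjunction space because $A\cap Z_{\leq v-\epsilon}=B$); these points are fixable, but the corner condition is the technical heart and must be made explicit.
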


One of the main theorems of \cite{SMT88} gives a description of local Morse data $(A,B)$ in terms of tangential and normal Morse data. The latter requires the notion of the normal slice.

\begin{definition}\label{def: normal slice}
    Let $N'$ be a smooth submanifold of $M$ which is transverse to each stratum of $Z$, intersects the stratum $S$ in the single point $p$, and satisfied $\dim(S)+\dim(N')=\dim(M)$. Choose a Riemannian metric on $M$ and let $r(z)= |z-p|$ for each $z\in M$. Let $B_\delta(p)$ denote the closed ball $B_\delta(p)=\lbrace z\in M| r(z)\leq \delta\rbrace$, where $\delta$ is sufficiently small such that $\partial B_\delta(p)$ is transverse to each stratum of $Z$ and each stratum in $Z\cap N'$.
    The \emph{normal slice} $N(p)$ through the stratum $S$ at the point $p$
is the set
\[ N(p)=N'\cap Z\cap B_\delta(p).\]
\end{definition}

\begin{definition}\label{def: tang and norm Md}
    The \emph{tangential Morse data} for $f$ at $p$ is the local Morse for $f|_X$ at $p$.
    The \emph{normal Morse data} for $f$ at $p$ is the local Morse data for $f|_N$ at $p$.
\end{definition}

\begin{theorem}[\cite{SMT88}, I.3.7]\label{thm: SMT main thm}
    For a fixed stratification of $Z$ and a fixed function $f$ with a nondepraved critical point $p\in Z$, there is a $\cals$-decomposition preserving homeomorphism of pairs:
    Local Morse data $\cong$ (Tangential Morse data) $\times$ (Normal Morse data);

    i.e. if $(P,Q)$ is the tangential Morse data and $(J,K)$ is the normal Morse data, then the local Morse data is given by 
    \[ (P\times J, P\times K\cup Q\times J). \]
\end{theorem}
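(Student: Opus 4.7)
The plan is to follow the original strategy of Goresky--MacPherson and reduce the Main Theorem to a controlled tubular-neighborhood argument on the Whitney stratified space $Z$. Write $S$ for the stratum containing the critical point $p$ and $N=N(p)$ for a normal slice as in Definition~\ref{def: normal slice}. Because $N'$ is transverse to every stratum and $\dim S + \dim N'=\dim M$, the intersections of $N$ with the strata of $Z$ give a Whitney stratification of $N$ itself, with $p$ as its unique $0$-stratum. The goal is to produce a stratum-preserving homeomorphism of pairs between the local Morse data of $f$ at $p$ and the product of the tangential and normal Morse data described in Definition~\ref{def: tang and norm Md}.

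The first step is to construct a local product structure. Choose Morse coordinates $(u_1,\dots,u_s)$ for $f|_S$ near $p$, extended to a smooth chart on $M$ by coordinates $(v_1,\dots,v_k)$ transverse to $S$, so that $N'=\{u=0\}$. Using Thom's first isotopy lemma applied to the projection $\pi_S\colon M\to S$ onto the $u$-coordinates (restricted to a small neighborhood of $p$), the Whitney conditions give a \emph{stratum-preserving} homeomorphism from a neighborhood $U$ of $p$ in $Z$ to $D_\delta^S\times N$, where $D_\delta^S\subset S$ is a small disk around $p$. This is the key place where Whitney condition (b) enters: it guarantees that the tube projection $\pi_S$ is continuous on strata of higher dimension approaching $S$, so that integrating controlled lifts of coordinate vector fields on $D_\delta^S$ produces a continuous (not merely stratumwise smooth) trivialization.

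The second step is to rectify $f$ along this trivialization so that, after a further stratum-preserving homeomorphism, $f$ becomes the sum of $f|_S$ and $f|_N$. On $S$ the function $f|_S$ is already in Morse normal form in the $u$-coordinates. On $N$ the restriction $f|_N$ has an isolated critical point at $p$ by the generalized tangent space condition (3) of Definition~\ref{def: strat Morse func}, since no generalized tangent space $Q\neq T_pS$ lies in $\ker d\tilde f(p)$. Write $f(u,v)=f|_S(u)+f|_N(v)+R(u,v)$ where $R(0,v)=R(u,0)=0$. The nondepraved hypothesis, together with the generalized tangent space condition, lets one construct a controlled vector field on $U$, tangent to each stratum, whose flow absorbs the error term $R$; integrating this flow along the appropriate interval gives the desired homeomorphism $(U,f)\cong (D_\delta^S\times N,\, f|_S+f|_N)$ as stratified pairs.

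The third step is formal: for functions of the form $g(u,v)=g_1(u)+g_2(v)$ on a product $X\times Y$ with isolated critical values at $(p,q)$, the coarse Morse data in an $\varepsilon$-interval around the critical value factors as the product of the Morse data of $g_1$ at $p$ and $g_2$ at $q$, using the elementary identity $g^{-1}[v-\varepsilon,v+\varepsilon]=\bigcup_{\alpha+\beta=0}g_1^{-1}[\alpha-\varepsilon/2,\alpha+\varepsilon/2]\times g_2^{-1}[\beta-\varepsilon/2,\beta+\varepsilon/2]$ up to a stratum-preserving deformation retract. Applied to the decomposition produced in the previous step, this yields the homeomorphism of pairs $(P\times J,\; P\times K\cup Q\times J)$ as claimed. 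The hardest step will be the second one: constructing the controlled stratified vector field that rectifies $f$ into a sum, since one must integrate it across strata of different dimensions while preserving continuity at $p$; this is exactly where the nondepravity condition and Whitney (b) are both indispensable, and it is the step whose technical execution occupies most of the proof in \cite{SMT88}.
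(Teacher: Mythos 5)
This statement is not proved in the paper at all: it is quoted verbatim from Goresky--MacPherson and the paper's ``proof'' is the citation to \cite{SMT88}, I.3.7. So the only question is whether your sketch would stand on its own as a proof, and it does not. Your outline correctly identifies the overall shape of the Goresky--MacPherson argument (normal slice, local product structure near $S$ via Whitney (b) and Thom--Mather theory, then a product formula for Morse data), but the entire mathematical content is concentrated in your second step, which you both assert without proof and state in a form that is stronger than what is true or needed. You claim a stratum-preserving homeomorphism $(U,f)\cong(D^S_\delta\times N,\ f|_S+f|_N)$, i.e.\ that $f$ itself can be conjugated to an exact sum by absorbing the cross term $R(u,v)$ with a controlled flow. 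Nondepravity gives you no normal form in the normal directions: $f|_N$ has merely an isolated nondepraved critical point at $p$ (no nondegeneracy, no Morse lemma), and there is no reason the error term $R$ can be flowed away while preserving the stratification and the level sets near $p$. Goresky--MacPherson do not prove such a rectification; their actual proof never puts $f$ in normal form but instead compares the pairs directly, using fringed sets and repeated ``moving the wall'' arguments (absence of characteristic covectors for maps such as $(f,\pi,r)$ feeding into Thom's first isotopy lemma) to show that the local Morse data is homeomorphic, as a pair, to the product of the tangential and normal data. Saying ``the nondepravity condition and Whitney (b) are both indispensable'' at exactly this point is an accurate diagnosis, but it is a placeholder for the proof, not a proof.

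Secondary issues: your third step's product formula for coarse Morse data of a sum $g_1(u)+g_2(v)$ is itself a lemma requiring an argument (a homeomorphism of pairs, not a ``stratum-preserving deformation retract,'' is what attachment of Morse data requires), and in any case it only becomes applicable after the unestablished rectification. If your goal is to reproduce the result rather than cite it, the honest options are either to follow \cite{SMT88} through the fringed-set/moving-the-wall machinery, or to leave the theorem as a citation, as the paper does.
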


\bibliographystyle{amsalpha}
{\footnotesize\bibliography{refs}}

\newcommand{\etalchar}[1]{$^{#1}$}
\providecommand{\bysame}{\leavevmode\hbox to3em{\hrulefill}\thinspace}
\providecommand{\MR}{\relax\ifhmode\unskip\space\fi MR }
\providecommand{\MRhref}[2]{%
  \href{http://www.ams.org/mathscinet-getitem?mr=#1}{#2}
}
\providecommand{\href}[2]{#2}
\begin{thebibliography}{FotMSU19}

\bibitem[ABG{\etalchar{+}}18]{ABGHLM:18}
Vigleik Angeltveit, Andrew~J. Blumberg, Teena Gerhardt, Michael~A. Hill, Tyler
  Lawson, and Michael~A. Mandell, \emph{Topological cyclic homology via the
  norm}, Doc. Math. \textbf{23} (2018), 2101--2163.

\bibitem[Abr96]{Abrams96}
Lowell Abrams, \emph{Two-dimensional topological quantum field theories and
  {F}robenius algebras}, Journal of Knot theory and its ramifications
  \textbf{5} (1996), no.~05, 569--587.

\bibitem[ADE14]{audin2014morse}
Michele Audin, Mihai Damian, and Reinie Ern{\'e}, \emph{Morse theory and floer
  homology}, vol.~2, Springer, 2014.

\bibitem[Aya08]{ayala2008geometric}
David Ayala, \emph{Geometric cobordism categories}, arXiv preprint
  arXiv:0811.2280 (2008).

\bibitem[BCR04]{baas-open-closed}
Nils~A. Baas, Ralph~L. Cohen, and Antonio Ramirez, \emph{The topology of the
  category of open and closed strings}, 2004.

\bibitem[BDSPV15]{bartlett2015modular}
Bruce Bartlett, Christopher~L. Douglas, Christopher~J. Schommer-Pries, and
  Jamie Vicary, \emph{Modular categories as representations of the
  3-dimensional bordism 2-category}, 2015.

\bibitem[BHM93]{BHM:93}
M.~B\"okstedt, W.C. Hsiang, and I.~Madsen, \emph{The cyclotomic trace and
  algebraic {{\textit{K}}}-theory of spaces}, Invent Math \textbf{111} (1993),
  465--539.

\bibitem[Car18]{Carqueville}
Nils Carqueville, \emph{Lecture notes on two-dimensional defect {TQFT}},
  Advanced school on topological quantum field theory, Banach Center Publ., 114
  \textbf{114} (2018), 49--84.

\bibitem[Con83]{connes}
Alain Connes, \emph{Cohomologie cyclique et foncteurs $ext^n$}, C.R.A.S.
  \textbf{296} (1983), 953--958.

\bibitem[DF94]{Dai-Freed}
Xianzhe Dai and Daniel~S. Freed, \emph{$\eta$-invariants and determinant
  lines}, Journal of Mathematical Physics \textbf{35} (1994), no.~10,
  5155–5194.

\bibitem[Dij89]{Dijkgraaf89}
R.~H. Dijkgraaf, \emph{A geometrical approach to two-dimensional conformal
  field theory}, Ph.D. thesis, 1989.

\bibitem[EG98]{erdmann/green:98}
Karin Erdmann and Richard~M. Green, \emph{On representations of affine
  {T}emperley–{L}ieb algebras, {II}}, Pacific Journal of Mathematics
  \textbf{191} (1998), 243--273.

\bibitem[FG97]{fan/green:97}
C.~Kenneth Fan and R.~Green, \emph{On the affine {T}emperley–{L}ieb
  algebras}, Journal of the London Mathematical Society \textbf{60} (1997).

\bibitem[FH21]{Freed-Hopkins}
Daniel~S Freed and Michael~J Hopkins, \emph{Reflection positivity and
  invertible topological phases}, Geometry \& Topology \textbf{25} (2021),
  no.~3, 1165–1330.

\bibitem[FotMSU19]{freed2019lectures}
D.S. Freed, Conference~Board of~the Mathematical~Sciences, and National
  Science~Foundation (U.S.), \emph{Lectures on field theory and topology}, CBMS
  Regional Conference Series in Mathematics, Conference Board of the
  Mathematical Sciences, 2019.

\bibitem[Fre]{FreedNotes}
Dan Freed, \emph{Lecture notes, an application of morse-cerf theory}.

\bibitem[GL98]{Graham-Lehrer:98}
J.~J. Graham and G.~I. Lehrer, \emph{The representation theory of affine
  {T}emperley-{L}ieb algebras}, L'Enseignement Math\'ematique \textbf{44}
  (1998), 173--218.

\bibitem[GM88]{SMT88}
Mark Goresky and Robert MacPherson, \emph{{Stratified {M}orse Theory}},
  Springer, 1988.

\bibitem[GP10]{guillemin2010differential}
Victor Guillemin and Alan Pollack, \emph{Differential topology}, vol. 370,
  American Mathematical Soc., 2010.

\bibitem[Gre98]{green:98}
R.M. Green, \emph{On representations of affine {T}emperley–{L}ieb algebras},
  CMS Conference Proceedings, vol.~24, American Mathematical Society, 1998,
  pp.~245--261.

\bibitem[GWW12]{GWW}
David Gay, Katrin Wehrheim, and Chris Woodward, \emph{Connected {C}erf theory},
  preprint (2012).

\bibitem[Han09]{hanbury-open-closed}
Elizabeth Hanbury, \emph{An open-closed cobordism category with background
  space}, Algebraic \& Geometric Topology \textbf{9} (2009), no.~2, 833–863.

\bibitem[Hoe18]{Hoekzema}
Renee~S. Hoekzema, \emph{Algebraic topology of manifolds--higher orientability
  and spaces of nested manifolds}, Ph.D. thesis, University of Oxford, 2018.

\bibitem[JF22]{Johnson_Freyd_2022}
Theo Johnson-Freyd, \emph{On the classification of topological orders},
  Communications in Mathematical Physics \textbf{393} (2022), no.~2,
  989–1033.

\bibitem[Jon01]{jones:01}
V.F.R. Jones, \emph{The annular structure of subfactors, essays on geometry and
  related topics}, Monogr. Enseign. Math. \textbf{1} (2001), 401--463.

\bibitem[Jon21]{Jones}
V.~F.~R. Jones, \emph{Planar algebras, {I}}, New Zealand J. Math. \textbf{52}
  (2021), 1--107. \MR{4374438}

\bibitem[Koc03]{kock_2003}
Joachim Kock, \emph{Frobenius algebras and 2-d topological quantum field
  theories}, London Mathematical Society Student Texts, Cambridge University
  Press, 2003.

\bibitem[Lod92]{loday:92}
J.L. Loday, \emph{Cyclic spaces and {$S^1$}-equivariant homology}, Cyclic
  Homology, Grundlehren der mathematischen Wissenschaften, vol. 301, Springer,
  Berlin, Heidelberg, 1992.

\bibitem[LP08]{LP:extended}
Aaron~D. Lauda and Hendryk Pfeiffer, \emph{Open–closed strings:
  {T}wo-dimensional extended {TQFT}s and {F}robenius algebras}, Topology and
  its Applications \textbf{155} (2008), 623--666.

\bibitem[Mal15]{malkiewich:15}
Cary Malkiewich, \emph{A visual introduction to cyclic sets and cyclotomic
  spectra}, 2015.

\bibitem[Mas06]{Massey}
David~B. Massey, \emph{Stratified {M}orse theory: past and present}, Pure Appl.
  Math. Q. \textbf{2} (2006), no.~4, 1053--1084. \MR{2282413}

\bibitem[Mil65a]{milnorhcobordism}
John Milnor, \emph{Lectures on the h-cobordism theorem}, vol. 2258, Princeton
  university press, 1965.

\bibitem[Mil65b]{milnor}
\bysame, \emph{Lectures on the h-cobordism theorem}, vol. 2258, Princeton
  university press, 1965.

\bibitem[Moo]{Moorebranelectures}
Gregory Moore, \emph{Lectures on branes, {K}-theory and {RR} charges}.

\bibitem[Pen12]{penneys}
David Penneys, \emph{A cyclic approach to the annular {Temperley-Lieb}
  category}, Journal of Knot Theory and Its Ramifications \textbf{21} (2012),
  no.~6.

\bibitem[Pon59]{pontryagin55}
Lev Pontrjagin, \emph{Smooth manifolds and their applications in homotopy
  theory}, AMS Translation \textbf{11} (1959).

\bibitem[PS14]{PontoShulman}
Kate Ponto and Michael Shulman, \emph{Traces in symmetric monoidal categories},
  Expositiones Mathematicae \textbf{32} (2014), 248--273.

\bibitem[RW11]{randal2011embedded}
Oscar Randal-Williams, \emph{Embedded cobordism categories and spaces of
  submanifolds}, International Mathematics Research Notices \textbf{2011}
  (2011), no.~3, 572--608.

\bibitem[Shi64]{shiga}
Koji Shiga, \emph{Some aspects of real-analytic manifolds and differentiable
  manifolds}, J. Math. Soc. Japan \textbf{16} (1964), no.~2, 128--142.

\bibitem[SP14]{schommerpries2014classification}
Christopher~J. Schommer-Pries, \emph{The classification of two-dimensional
  extended topological field theories}, 2014.

\bibitem[Sto71]{Stong-cobordism-of-pairs}
Robert~Evert Stong, \emph{On the cobordism of pairs}, Pacific Journal of
  Mathematics \textbf{38} (1971), 803--816.

\bibitem[Wal61]{wall1961cobordism}
CTC Wall, \emph{Cobordism of pairs}, Commentarii Mathematici Helvetici
  \textbf{35} (1961), no.~1, 136--145.

\bibitem[Wit16]{Witten-anomaly}
Edward Witten, \emph{Fermion path integrals and topological phases}, Reviews of
  Modern Physics \textbf{88} (2016), no.~3.

\bibitem[WW11]{walker201131tqfts}
Kevin Walker and Zhenghan Wang, \emph{(3+1)-{TQFTs} and topological
  insulators}, 2011.

\end{thebibliography}

\end{document}